\crefname{conjecture}{conjecture}{conjectures}
\crefname{observation}{observation}{observations}
\title{Towards Crossing-Free Hamiltonian Cycles in Simple Drawings of Complete Graphs}
\author{Oswin Aichholzer}{Institute of Software Technology, Graz University of Technology, Austria}{oaich@ist.tugraz.at}{https://orcid.org/0000-0002-2364-0583}{Partially supported by the Austrian Science Fund (FWF) grant W1230}
\author{Joachim Orthaber}{Institute of Software Technology, Graz University of Technology, Austria}{orthaber@ist.tugraz.at}{https://orcid.org/0000-0002-9982-0070}{Supported by the Austrian Science Fund (FWF) grant W1230}
\author{Birgit Vogtenhuber}{Institute of Software Technology, Graz University of Technology, Austria}{bvogt@ist.tugraz.at}{https://orcid.org/0000-0002-7166-4467}{Partially supported by Austrian Science Fund within the collaborative DACH project \emph{Arrangements and Drawings} as FWF project \mbox{I 3340-N35}}
\authorrunning{O. Aichholzer, J. Orthaber, and B. Vogtenhuber}
\keywords{crossing-free Hamiltonian cycles and paths, classes of simple drawings}
\begin{document}

\maketitle

\begin{abstract}
It is a longstanding conjecture that every simple drawing of a complete graph on $n\geq 3$ vertices contains a crossing-free Hamiltonian cycle. We strengthen this conjecture to \enquote{there exists a crossing-free Hamiltonian path between each pair of vertices} and show that this stronger conjecture holds for several classes of simple drawings, including strongly c-monotone drawings and cylindrical drawings. As a second main contribution, we give an overview on different classes of simple drawings and investigate inclusion relations between them up to weak isomorphism.
\end{abstract}

\section{Introduction}\label{sec:intro}

In a \emph{drawing} of a graph in the plane (or on the sphere) vertices are represented by distinct points and edges by Jordan arcs connecting the respective points. For convenience we also call those representations vertices and edges. A \emph{simple drawing} is a drawing of a graph where each pair of edges meets in at most one point, which has to be either a proper crossing or a common end-vertex. A fundamental line of research in this area is concerned with finding \emph{crossing-free} sub-drawings in simple drawings of the complete graph $K_{n}$ on $n$ vertices. These are sub-drawings with pairwise non-crossing edges, also called \emph{plane} sub-drawings. For this task it is sufficient to know all crossings in the drawing, its precise shape is negligible. We call two simple drawings $\mathcal{D}$ and $\mathcal{D}'$ of the same graph \emph{weakly isomorphic} if two edges in~$\mathcal{D}$ cross if and only if the corresponding edges in $\mathcal{D}'$ cross. We call them \emph{strongly isomorphic} if there exists a homeomorphism (of the sphere) that maps $\mathcal{D}$ to $\mathcal{D}'$. Weak isomorphism classes of~$K_{n}$ can be uniquely represented by rotation systems. The \emph{rotation} of a vertex $v$ is the (clockwise) circular order of all edges that are incident to the vertex $v$, where these edges are classically identified by their second end-vertex. The collection of the rotations of all vertices in a simple drawing is called its \emph{rotation system}. See {\'A}brego et al.~\cite{aafhpprsv-2015-agdscg} and Kyn{\v{c}}l~\cite{k-2009-esctg} for more information on rotation systems and see \Cref{fig:five-K5s} for all non-isomorphic simple drawings of $K_{5}$. In $1988$, Nabil Rafla stated the following conjecture in his PhD thesis~\cite{r-1988-gdcg}.

\begin{conjecture}[Rafla~\cite{r-1988-gdcg}]\label{conj:main}
Every simple drawing of the complete graph $K_{n}$ on $n \geq 3$ vertices contains at least one crossing-free Hamiltonian cycle.
\end{conjecture}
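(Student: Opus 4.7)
The plan is induction on $n$. The base case $n=3$ is trivial, since the unique simple drawing of $K_3$ is itself a crossing-free triangle, and the cases $n=4,5$ reduce to finite checks over the known rotation systems (see \Cref{fig:five-K5s} for $K_5$).

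For the inductive step I would remove a vertex $v$, apply induction to the sub-drawing on the remaining $n-1$ vertices to obtain a crossing-free Hamiltonian cycle $C$, and then try to reinsert $v$ by subdividing a single edge of $C$ with~$v$. The edges of~$C$ cut the sphere into $n-1$ faces, and $v$ lies in exactly one face $F$. The task reduces to finding a boundary edge $uw$ of $F$ such that neither of the added edges $vu$ or $vw$ crosses any edge of $C$. A natural first attempt is a global counting argument: sum, over boundary edges $uw$ of~$F$, the number of other edges of $C$ met by $vu$ or $vw$, and try to bound this total strictly below twice the number of boundary edges of $F$; then at least one such edge admits a safe reinsertion. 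Extra slack may come from choosing $v$ carefully (e.g.\ so that $F$ has few boundary edges, or via an averaging argument over possible choices of $v$) or from first deleting an empty triangle, whose existence in every simple drawing of $K_n$ is classically known.

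If the direct approach fails, my backup mirrors the strategy of the paper itself. First strengthen the statement to \enquote{there exists a crossing-free Hamiltonian path between every ordered pair of vertices}, which is more amenable to induction because the two endpoints of the path can be prescribed in advance, giving the inductive hypothesis a strong handle when one inserts the new vertex at a boundary position. Then establish the strengthened version class by class — convex, strongly c-monotone, cylindrical, generalized twisted — exploiting in each class a distinguished vertex order or layering that guides the reinsertion step, and combine these results with weak-isomorphism inclusions between the classes to cover as broad a family of drawings as possible.

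The main obstacle is precisely the inductive step in full generality: in a generic rotation system there is no obvious structural feature that tells one where to reinsert $v$, and the face $F$ may be bounded by edges each of whose endpoints is \enquote{blocked} by many crossings from the new vertex, so no global counting seems to close the gap without extra hypotheses. This is ultimately why \Cref{conj:main} has resisted proof since 1988, and why the rest of the paper focuses on identifying tractable subclasses and on the inclusion structure among them rather than attacking the conjecture head-on.
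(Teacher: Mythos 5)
This statement is a conjecture, and the paper does not prove it --- nor does your proposal. Your primary plan (delete a vertex $v$, find a crossing-free Hamiltonian cycle $C$ in the sub-drawing, and reinsert $v$ by subdividing a boundary edge of the face of $C$ containing $v$) has a genuine gap that you yourself identify: in a fixed simple drawing the edges $vu$ and $vw$ are already drawn, you have no freedom in routing them, and there is no known counting or averaging argument that guarantees even one boundary edge of $F$ admits a safe reinsertion. No amount of care in choosing $v$ or appealing to empty triangles is known to close this, so the inductive step does not go through, and the statement remains open after your argument just as before it.

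Your backup plan is essentially what the paper actually does, with one notable difference in mechanism worth flagging. The paper's \Cref{conj:stronger} is exactly your strengthening to crossing-free Hamiltonian paths between all prescribed vertex pairs, and the paper verifies it for strongly c-monotone and cylindrical drawings (\Cref{thm:all-hp-strong-c-mon}, \Cref{thm:all-hp-cylindrical}), using in each class a distinguished vertex order as you suggest. However, the implication from the path conjecture back to the cycle conjecture is not obtained by reinserting a deleted vertex into a cycle; instead, \Cref{thm:conj-all-hp-stronger-hc} goes \emph{up} a level: it duplicates a vertex $v_n$ of a drawing of $K_n$ to produce a drawing of $K_{n+1}$ with a completely uncrossed edge $\{v_n,v_{n+1}\}$, takes a crossing-free Hamiltonian path between the two copies, and identifies them to close the cycle. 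This sidesteps exactly the reinsertion obstruction that sinks your primary plan, at the price of needing \Cref{conj:stronger} for \emph{all} drawings of $K_{n+1}$ (or for a subclass closed under the duplication construction, as in \Cref{cor:strong-c-mon-conjectures}) rather than for a single drawing. In short: your proposal correctly diagnoses why the conjecture is hard and converges on the paper's strategy for subclasses, but it does not constitute a proof of the stated conjecture, and the paper does not claim one either.
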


\subparagraph{Related work.}

Under the assumption that \Cref{conj:main} is true, Rafla enumerated all different simple drawings of $K_{n}$ for $n \leq 7$ up to weak isomorphism. Since then, \Cref{conj:main} and relaxations of it have attracted considerable attention. Especially, a crossing-free Hamiltonian cycle in a simple drawing~$\mathcal{D}$ implies, by removing an arbitrary edge of the cycle, that $\mathcal{D}$ also contains a crossing-free Hamiltonian path. Furthermore, for even $n$, a crossing-free Hamiltonian path in turn implies that $\mathcal{D}$ contains a plane perfect matching, by disregarding every second edge of the path. However, even the question of the existence of a plane perfect matching in every simple drawing of $K_{n}$, for even $n$, is still open.

In $2003$, Pach, Solymosi, and T{\'o}th~\cite{pst-2003-ucctg} showed that every simple drawing of $K_{n}$ contains plane sub-drawings isomorphic to any tree of size $\mathcal{O}(\log(n)^{1/6})$. This immediately implies a lower bound of $\Omega(\log(n)^{1/6})$ for the longest crossing-free path and largest plane matching in every simple drawing of $K_{n}$. Subsequently, a lot of progress has been made with regard to plane matchings. Until recently, a lower bound of $\Omega(n^{1/2-\varepsilon})$ for any fixed $\varepsilon > 0$, shown by Ruiz-Vargas~\cite{r-2017-mdetg}, was best known. This bound has lately been improved to $\Omega(\sqrt{n})$ in~\cite{agtvw-2022-twfpssdcg}, via the introduction and use of generalized twisted drawings; see \Cref{sec:classes} for a definition of generalized twisted drawings and several other drawing classes mentioned in this introduction. In the same paper, and independently also by Suk and Zeng~\cite{sz-2022-upcstg}, a lower bound of $\Omega(\log(n)/\log(\log(n)))$ for the longest crossing-free path was shown. This is the first improvement for crossing-free paths in almost $20$ years since the result from Pach, Solymosi, and T{\'o}th. Furthermore, the authors of \cite{agtvw-2022-twfpssdcg} obtained the same bound of $\Omega(\log(n)/\log(\log(n)))$ for the longest crossing-free cycle. For a more detailed history on the developments during the last $20$ years, we point the interested reader to the references given in~\cite{agtvw-2022-twfpssdcg,r-2017-mdetg,sz-2022-upcstg}.

In a similar direction, Pach, Solymosi, and T{\'o}th~\cite{pst-2003-ucctg} showed that every simple drawing of $K_{n}$ contains a sub-drawing of size $\Omega(\log(n)^{1/8})$ that is weakly isomorphic to either a convex straight-line drawing or a so-called twisted drawing. This implies the existence of various plane sub-drawings of the respective size. Recently, Suk and Zeng~\cite{sz-2022-upcstg} improved this $\Omega(\log(n)^{1/8})$ bound to $\Omega(\log(n)^{1/4-\varepsilon})$ for any fixed $\varepsilon > 0$. This also improves the above mentioned $\mathcal{O}(\log(n)^{1/6})$ bound for trees accordingly.

Furthermore, Ruiz-Vargas~\cite{r-2017-mdetg} showed that every c-monotone drawing of $K_{n}$ contains a plane matching of size $\Omega(n^{1-\epsilon})$ for any fixed $\varepsilon > 0$; so \enquote{almost} a perfect matching. In addition, in \cite{agtvw-2022-twfpssdcg} it was shown that every c-monotone drawing contains a sub-drawing of size $\Omega(\sqrt{n})$ that is weakly isomorphic to either an $x$-monotone drawing or a generalized twisted drawing, implying that c-monotone drawings of $K_{n}$ contain a crossing-free path as well as a crossing-free cycle of size $\Omega(\sqrt{n})$.

Concerning crossing-free Hamiltonian cycles, \Cref{conj:main} has been confirmed for all simple drawings on $n \leq 9$ vertices using the rotation system database~\cite{aafhpprsv-2015-agdscg}, and Ebenführer tested the conjecture  on randomly generated realizable rotation systems for up to $30$ vertices in his Master's thesis~\cite{e-2017-rrs}. Moreover, Arroyo, Richter, and Sunohara~\cite{ars-2021-edcgap} proved the existence of a crossing-free Hamiltonian cycle in so-called pseudospherical (or h-convex) drawings of~$K_{n}$. In a current paper, Bergold, Felsner, M.\ Reddy, and Scheucher~\cite{bfrs-2023-usspssd} extended this to (generalized) convex drawings. Moreover, they confirmed \Cref{conj:main} for $n=10$ using a SAT encoding.
\begin{wrapfigure}[13]{r}{.45\textwidth}
\centering
\includegraphics[page=1]{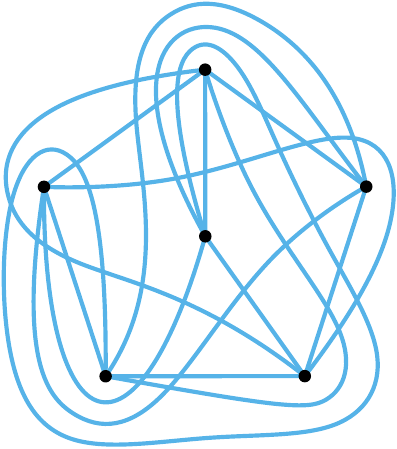}
\caption{\textbf{(\cite{aeppv-2017-ssdcg,e-2017-rrs})} A star-simple drawing that has no crossing-free Hamiltonian cycle.}
\label{fig:star-simple}
\end{wrapfigure}
Finally, in \cite{agtvw-2022-twfpssdcg}, \Cref{conj:main} was shown to be true for generalized twisted drawings on an odd number of vertices.

On the negative side, in~\cite{aeppv-2017-ssdcg,e-2017-rrs} the authors showed that simplicity of the drawings is crucial: They provided a star-simple drawing of $K_{6}$ that does not contain any \enquote{crossing-free} Hamiltonian cycle (see \Cref{fig:star-simple}). A \emph{star-simple drawing} is a drawing where incident edges must not cross but non-incident edges are allowed to cross more than once. We remark that in the context of star-simple drawings, edges are actually only considered to be \enquote{crossing} when they cross an odd number of times.

\paragraph*{Our contribution.}

Extending the above line of research, we focus on sub-classes of simple drawings and show \Cref{conj:main} to be true for strongly c-monotone drawings (\Cref{cor:cfhc-strong-c-mon}) as well as cylindrical drawings (\Cref{cor:cfhc-cylindrical}). To this end we introduce the following new conjecture.

\begin{conjecture}\label{conj:stronger}
Every simple drawing $\mathcal{D}$ of $K_{n}$ for $n \geq 2$ contains, for each pair of vertices $v_{a}$ and $v_{b}$ in $\mathcal{D}$, a crossing-free Hamiltonian path with end-vertices $v_{a}$ and $v_{b}$.
\end{conjecture}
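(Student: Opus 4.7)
The plan is an induction on $n$. The base case $n=2$ is immediate, since the single edge $v_av_b$ is itself a crossing-free Hamiltonian path. For the step, fix a simple drawing $\mathcal{D}$ of $K_n$ and a pair of distinguished vertices $v_a,v_b$, and assume the statement holds for simple drawings of $K_{n-1}$.

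My first attempt would be a \emph{remove-and-reinsert} argument. Pick a third vertex $v\notin\{v_a,v_b\}$, delete it from $\mathcal{D}$ to obtain a simple drawing $\mathcal{D}'$ of $K_{n-1}$, and apply the inductive hypothesis to get a crossing-free Hamiltonian $v_av_b$-path $P'=u_1u_2\dots u_{n-1}$ in $\mathcal{D}'$ with $u_1=v_a$ and $u_{n-1}=v_b$. We then look for an index $i$ such that replacing the single edge $u_iu_{i+1}$ by the two-edge detour $u_ivu_{i+1}$ yields a crossing-free Hamiltonian $v_av_b$-path in $\mathcal{D}$. Equivalently, the $n-2$ candidate detours determine $n-2$ ``insertion wedges'' at the vertex~$v$, and we need at least one wedge whose two bounding edges cross no remaining edge of~$P'$.

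To try to guarantee the existence of such an index, I would count, for each wedge, the number of crossings of its two bounding edges with the $n-3$ surviving edges of $P'$, and exploit that consecutive wedges share a bounding edge so that the sequence of counts varies in a controlled way around~$v$. If one can force the total sum of these counts to be strictly less than $n-2$, then some wedge has count zero by pigeonhole and is a valid insertion point. One could additionally use the freedom to choose the deleted vertex~$v$ so as to minimize the crossings of its star with~$P'$.

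The main obstacle — and the very reason \Cref{conj:stronger} is still open — is that in an adversarial simple drawing the edges of the star at~$v$ may cross~$P'$ too many times for any such naive counting to succeed. Two refinements seem promising and would be worth attempting. First, strengthen the inductive hypothesis to maintain a whole \emph{family} of crossing-free Hamiltonian $v_av_b$-paths, indexed by combinatorial data that is preserved under vertex deletion, so that at least one member of the family survives the insertion. Second, restrict to structured sub-classes (such as strongly c-monotone or cylindrical drawings) where a canonical angular or cylindrical order on the vertices singles out a specific vertex~$v$ to delete and forces a good insertion wedge to exist, closing the induction. This is exactly the strategy we pursue in the remainder of the paper, and it would be my blueprint for extending \Cref{conj:stronger} to further drawing classes before attacking the general case.
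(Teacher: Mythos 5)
The statement you are asked about is \Cref{conj:stronger}, which is a \emph{conjecture}: the paper does not prove it for general simple drawings, and neither do you. Your proposal is an honest research plan rather than a proof, and you correctly identify where it breaks: the remove-and-reinsert induction requires some wedge $u_i v u_{i+1}$ whose two bounding star edges avoid all $n-3$ surviving edges of $P'$, and there is no counting argument that guarantees this. The pigeonhole step would need the total number of crossings between the star of $v$ and $P'$ to be less than $n-2$, but Kyn{\v{c}}l and Valtr~\cite{kv-2009-ecfoestcg} construct simple drawings of $K_n$ in which \emph{every} edge is crossed $\Omega(n^{3/2})$ times, so even the best choice of $v$ can have a star crossing $P'$ far too often. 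Moreover, even a single crossing-free insertion wedge need not exist for a particular $P'$: nothing in the inductive hypothesis controls how $P'$ interacts with the deleted star, so the strengthening to a ``family of paths'' that you mention is not optional decoration but the entire missing content of the argument, and you do not supply it.

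It is also worth noting that where the paper does establish \Cref{conj:stronger} for sub-classes, it does not use your remove-and-reinsert scheme. For $x$-monotone drawings (\Cref{prop:all-hp-x-mon}) the induction splits the drawing along the edge $\{v_1,v_n\}$ into sub-drawings on either side and concatenates the resulting paths through that edge; for strongly c-monotone drawings (\Cref{thm:all-hp-strong-c-mon}) the key objects are gap edges and the wedge structure of \Cref{cor:strong-c-mon-2-cases}; for cylindrical drawings (\Cref{thm:all-hp-cylindrical}) the construction is built from completely uncrossed rim edges via \Cref{lem:cylindrical-rim}. In each case a class-specific structural edge or ordering does the work that your insertion-wedge count cannot do in general. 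So your second ``refinement'' points in the right direction, but as written the proposal neither proves the conjecture nor reproduces the paper's sub-class arguments.
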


We show that a positive answer to \Cref{conj:stronger} implies a positive answer to \Cref{conj:main} (\Cref{thm:conj-all-hp-stronger-hc}), which makes \Cref{conj:stronger} the stronger conjecture. We then verify \Cref{conj:stronger} for the classes of strongly c-monotone drawings (\Cref{thm:all-hp-strong-c-mon}) and cylindrical drawings (\Cref{thm:all-hp-cylindrical}). Implicitly, to the best of our knowledge, our results also include the first published proofs of \Cref{conj:main} for $x$-monotone drawings and strongly cylindrical drawings, and cover several more classes of simple drawings.

To better understand the impact of results on sub-classes of simple drawings, we further investigate relations between different such classes (\Cref{fig:the-class-order}). We show that every cylindrical drawing is weakly isomorphic to a c-monotone drawing (\Cref{prop:cylindrical-is-c-mon}) and, especially, that every strongly cylindrical drawing is weakly isomorphic to a strongly \mbox{c-}monotone drawing (\Cref{thm:strong-cylindrical-is-c-mon}). Furthermore, we provide examples of drawings that are strongly c-monotone but not cylindrical and vice versa (\Cref{fig:c-mon-cylindrical-examples}). We give a self-contained proof that $x$-monotone and $x$-bounded drawings of $K_{n}$ are the same up to weak isomorphism (\Cref{thm:x-bound-is-x-mon}) and provide an example showing that this is not true anymore for non-complete graphs (\Cref{fig:x-bound-non-complete}). Finally, we show that there exist $n$-shellable drawings of $K_{n}$ that are not weakly isomorphic to any $x$-bounded drawing (\Cref{obs:n-shell-not-x-mon}), which we do by extending an example from Balko, Fulek, and Kyn{\v{c}}l~\cite{bfk-2015-cnccmd} that intended to prove the same statement.

\subparagraph{Outline.}

The rest of the paper is structured as follows: In \Cref{sec:classes} we formally define several classes of simple drawings, provide background information on them, and put them into relation. \Cref{sec:allpairs} is dedicated to \Cref{conj:stronger}. We first investigate relations between \Cref{conj:stronger,conj:main} in \Cref{sec:paths-imply-cycle}. Then we verify \Cref{conj:stronger} for the above mentioned classes in \Cref{sec:all-pair-proofs}. Before concluding the paper with a discussion in \Cref{sec:conclude}, we study relations between different classes of drawings in \Cref{sec:relations}. In particular, \Cref{sec:x-bound-relate} contains the results on $x$-monotone, $x$-bounded, and $n$-shellable drawings, whereas in \Cref{sec:cylindrical-relate} we show the relations between the different types of cylindrical and c-monotone drawings.

\section{An overview on classes of simple drawings}\label{sec:classes}

In this section we give a detailed overview on classes of simple drawings that have been studied in the literature. Moreover, we define the class of strongly c-monotone drawings and introduce a clear distinction between two types of cylindrical drawings.

The definitions of most of the following classes depend on a certain representation in the plane and a fixed unbounded cell. However, throughout the paper we consider drawings up to weak isomorphism and only use these special representations to simplify the arguments. Especially, all our statements on crossing-free sub-drawings as well as inclusion relations for drawings $\mathcal{D}$ of a certain class $X$ also hold for all drawings $\mathcal{D}'$ that are weakly isomorphic to any drawing in $X$. For the sake of readability we do not spell this out though.

We denote the vertices in a simple drawing by $v_{1}, \ldots, v_{n}$. For an edge $e = \{ v_{a}, v_{b} \}$ we assume by convention that $a < b$. Then, using the total order $<$ on the vertices that is induced by their indices, we relate edges to each other: Given two non-incident edges $e = \{ v_{a}, v_{b} \}$ and $f = \{ v_{c}, v_{d} \}$, without loss of generality $a < c$, we call $e$ and $f$

\begin{itemize}
\item \emph{separated}, if $a < b < c < d$,
\item \emph{linked}\footnote{The setting we call \emph{linked} is also called \emph{crossing} in the literature; see Bar{\'a}t, Gy{\'a}rf{\'a}s, and T{\'o}th~\cite{bgt-2022-mstmocg}. To avoid confusion with two edges actually having a crossing, we chose a different name though.}, if $a < c < b < d$, and
\item \emph{nested}, if $a < c < d < b$.
\end{itemize}
\vspace{-1pt}

\subparagraph{Straight-line drawings.}

Probably the most studied class of simple drawings are \emph{straight-line drawings} in the plane where each edge is the line segment between its two end-vertices. For straight-line drawings of the complete graph $K_{n}$, no three vertices are allowed to lie on a common line, since otherwise edges would pass through vertices. Hence, straight-line drawings of $K_{n}$ are equivalent to $n$ points in general position in the plane. Further, we may assume without loss of generality that no two vertices share the same $x$-coordinate.

The \emph{convex straight-line drawing} on $n$ vertices, which we denote by $\mathcal{C}_{n}$, is the straight-line drawing of $K_{n}$ where all vertices are placed in convex position. We say \enquote{the} drawing because it is unique up to weak isomorphism. A combinatorial description of $\mathcal{C}_{n}$ is as follows: There exists an order of the vertices such that two edges cross if and only if they are linked~\cite{pst-2003-ucctg}.
\vspace{-2pt}

\subparagraph{$X$-monotone and $x$-bounded drawings.}

Let $\mathcal{D}$ be a simple drawing in the plane such that no two vertices have the same $x$-coordinate. If every vertical line in the plane crosses each edge of $\mathcal{D}$ at most once, then we call $\mathcal{D}$ an \emph{$x$-monotone drawing}; see \Cref{fig:x-mon-example} for an example. If each edge is just contained between the vertical lines through its left and right end-vertex, respectively, then we call $\mathcal{D}$ an \emph{$x$-bounded drawing}. $X$-monotone drawings are a natural generalization of straight-line drawings and clearly $x$-bounded drawings are a generalization of $x$-monotone drawings. For both $x$-monotone and $x$-bounded drawings we can assume, up to strong isomorphism, that all vertices lie on a common horizontal line and we label them $v_{1}, \ldots, v_{n}$ from left to right. Also, it can be decided in $\mathcal{O}(n^{5})$ time whether a given simple drawing of $K_{n}$ is weakly isomorphic to an $x$-monotone drawing~\cite{ahpsv-2015-dmgdcg}.
\vspace{-2pt}

\subparagraph{$2$-page-book drawings.}

A further class of simple drawings are \emph{$2$-page-book drawings}. They are defined such that all vertices lie on a common horizontal line, the \emph{spine} of the book, and the relative interior of each edge lies completely above or completely below the spine, which defines the two \emph{pages} of the book; see \Cref{fig:2-page-example} for an example. These drawings are closely related to book embeddings, where more than two pages are allowed; see Bernhart and Kainen~\cite{bk-1979-btg} for details. The following observation can be deduced from the facts that the spine is not crossed by any edge and that two edges in a simple drawing cross at most once.

\begin{observation}\label{obs:2-page-crossings}
Two edges in a $2$-page-book drawing cross if and only if they lie in the same page and are linked with respect to the vertex order along the spine.
\end{observation}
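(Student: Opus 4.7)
The plan is to split both directions of the biconditional via the key observation that two non-incident edges in a $2$-page-book drawing can cross only within a single page, with their configuration along the spine determining whether or not they actually cross.

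For the ``only if'' direction, I would first show that two crossing edges cannot lie on different pages. If $e$ and $f$ were on opposite pages, their relative interiors would lie in the two disjoint open half-planes bounded by the spine. The only possible common points would therefore be on the spine itself; but the spine contains only the vertices of the drawing and no interior points of edges, so any common point would have to be a shared end-vertex. Incident edges in a simple drawing do not cross, so this cannot yield a proper crossing. Hence both edges must share a page, which I may assume without loss of generality to be the upper one.

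For the remaining half of ``only if'' and for ``if'', I would carry out a single Jordan-curve argument on non-incident edges $e=\{v_a,v_b\}$ and $f=\{v_c,v_d\}$ with $a<c$ in the upper page. The edge $e$ together with the spine segment $[v_a,v_b]$ bounds a closed region $R$ in the upper half-plane, and a point immediately above the spine lies in the interior of $R$ exactly when its $x$-coordinate is strictly between those of $v_a$ and $v_b$; this follows from the fact that $e$ meets the spine only at $v_a$ and $v_b$. Because the interior of $f$ enters the upper open half-plane immediately at each of $v_c$ and $v_d$, I can read off: in the separated case ($a<b<c<d$) both initial sub-arcs of $f$ lie outside $R$; in the nested case ($a<c<d<b$) both lie inside $R$; in the linked case ($a<c<b<d$) one lies inside and the other outside. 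Since $f$ meets the spine only at its end-vertices, any crossing of $f$ with the boundary of $R$ must be a crossing with $e$. A parity count of boundary crossings therefore forces an even number of crossings between $e$ and $f$ in the separated and nested cases, and an odd number in the linked case; combined with simplicity (at most one crossing per pair) this pins the count down to $0$ in the first two cases and to $1$ in the third, proving both directions simultaneously.

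The only genuine subtlety is the claim about which side of the Jordan curve a point immediately above the spine lies on; this is nonetheless immediate from $e$ touching the spine only at its end-vertices, and I do not expect any real obstacle — reflecting the fact that the statement is labeled an observation and is essentially a topological book-keeping exercise.
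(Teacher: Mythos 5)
Your proof is correct and follows exactly the route the paper indicates: the paper leaves this as a one-sentence deduction from the two facts that no edge crosses the spine and that two edges in a simple drawing cross at most once, and your Jordan-curve parity argument is precisely the elaboration of that sketch (the spine being uncrossed confines intersections with the boundary of the region $R$ to the edge $e$ itself, and simplicity upgrades the parity count to an exact count of $0$ or $1$). No gaps; the subtlety you flag about which side of $R$ a point just above the spine lies on is handled correctly by your observation that $e$ meets the spine only at its end-vertices.
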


From this it can easily be seen that a simple drawing of $K_{n}$ with $n \geq 3$ is weakly isomorphic to a $2$-page-book drawing if and only if it contains a completely uncrossed\footnote{We use the term \emph{completely uncrossed} for sub-drawings $\mathcal{D}'$ of a simple drawing~$\mathcal{D}$ if no edge of $\mathcal{D}'$ is crossed by any edge of $\mathcal{D}$. With that we aim to clearly distinguish them from \emph{crossing-free} sub-drawings where the edges of $\mathcal{D}'$ are just not crossed by other edges of $\mathcal{D}'$.} Hamiltonian cycle. It also follows that the sub-drawing induced by a single page is weakly isomorphic to a sub-drawing of $\mathcal{C}_{n}$ and that each $2$-page-book drawing is weakly isomorphic to an $x$-monotone drawing, for example, by drawing all edges as half-circles in the respective page. Not all $2$-page-book drawings are weakly isomorphic to a straight-line drawing though. In particular, there are $2$-page-book drawings of $K_{n}$, like the one in \Cref{fig:2-page-example}, with fewer crossings than any straight-line drawing of $K_{n}$ can have~\cite{aafrs-2013-2pcn}.

For the spine of a $2$-page-book drawing we may actually use any simple curve that partitions the plane into two connected components. In particular a circle is possible. In that case we get a circular order $\pi^{\circ}$ on the vertices along the spine. However, we can create a linear order $\pi$ out of $\pi^{\circ}$ by choosing some vertex as $v_{1}$ and some direction along the circle as increasing. If two edges $e$ and $f$ are linked for one possible choice of $\pi$, then $e$ and~$f$ are linked for all possible choices of $\pi$. Hence, \Cref{obs:2-page-crossings} holds for any linear order along a circular spine. In contrast to that, every edge pair that is separated for one such linear order $\pi$ is nested for some other linear order $\pi'$ and vice versa.

\begin{figure}[h]
\centering
\subcaptionbox{\centering\label{fig:x-mon-example}}[.49\textwidth]{\includegraphics[page=17]{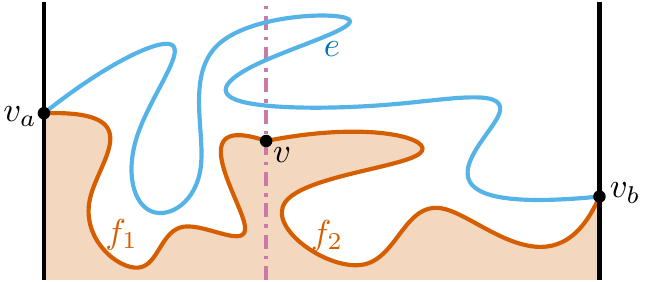}}
\subcaptionbox{\centering\label{fig:2-page-example}}[.49\textwidth]{\includegraphics[page=16]{Figures/special_x_bound.pdf}}
\caption{\textbf{(a)}~An $x$-monotone drawing of $K_{6}$. \textbf{(b)}~A crossing minimal $2$-page-book drawing of $K_{8}$. The spine is drawn dashed purple, the completely uncrossed Hamiltonian cycle is drawn orange.}
\label{fig:x-mon-2-page-example}
\vspace{-\baselineskip}
\end{figure}

\subparagraph{Shellable drawings.}

{\'A}brego et al.~\cite{aafrs-2014-sdccn} introduced the notion of \emph{shellable drawings}. A simple drawing $\mathcal{D}$ is \emph{$s$-shellable} if there exists an ordered subset $v_{1}, \ldots, v_{s}$ of its vertices such that the following two properties hold. First, $v_{1}$ and $v_{s}$ are incident to a common cell $F$ of~$\mathcal{D}$. Second, for each $1 \leq i < j \leq s$, both $v_{i}$ and $v_{j}$ are incident to the cell $F_{ij}$ in $\mathcal{D}_{ij}$, where $\mathcal{D}_{ij}$ is the sub-drawing of $\mathcal{D}$ induced by the vertices $v_{i}, \ldots, v_{j}$ and $F_{ij}$ is the cell containing~$F$. It is easy to see that every $x$-bounded drawing of a graph on $n$ vertices is $n$-shellable (short \emph{shellable}) with the vertices ordered in horizontal direction from left to right and $F$ being the unbounded cell. There exist several generalizations of shellable drawings, for example, \emph{bishellable drawings} (see {\'A}brego et al.~\cite{aafmmmrrv-2018-bd}) and \emph{seq-shellable drawings} (see Mutzel and Oettershagen~\cite{mo-2018-cnssdcg}), for which we refer the interested reader to the respective paper.

\subparagraph{C-monotone drawings.}

To characterize the next class, we follow the definition given in~\cite{agtvw-2022-twfpssdcg}. An edge $e$ in a simple drawing is \emph{c-monotone with respect to a point $O$ of the plane} if every ray starting at $O$ crosses $e$ at most once. We call a simple drawing $\mathcal{D}$ in the plane a \emph{c-monotone drawing} if all edges in $\mathcal{D}$ are c-monotone with respect to a common point~$O$ and no two vertices lie on a common ray starting at $O$. The letter \enquote{c} in c-monotone is meant as an abbreviation of \enquote{circularly} and \enquote{$O$} stands for \enquote{the origin}. When we consider a c-monotone drawing $\mathcal{D}$, we assume that a suitable fixed point $O$ is given with it. We can also assume, up to strong isomorphism, that all vertices of $\mathcal{D}$ lie on a common circle with center~$O$.

Apart from being a generalization of $x$-monotone drawings, c-monotone drawings are of interest for the following reason: In a simple drawing $\mathcal{D}$ of $K_{n}$ the star $\mathcal{S}_{v}$ of any vertex $v$ can be extended by a plane spanning tree $\mathcal{T}$ on the $n-1$ vertices of $\mathcal{D}$ excluding $v$ such that also $\mathcal{S}_{v} + \mathcal{T}$ is plane; see Garc{\'i}a, Pilz, and Tejel~\cite[Corollary~3.4]{gpt-2021-psctd}. The structure $\mathcal{S}_{v} + \mathcal{T}$ then either contains large plane sub-drawings itself (like matchings, paths, and cycles) or enforces the existence of a large c-monotone sub-drawing in $\mathcal{D}$. See Fulek~\cite[Proof of Theorem~1.1]{f-2014-endestgcd} for more details on the idea behind this.

\begin{figure}[h]
\centering
\subcaptionbox{\centering\label{fig:strong-c-mon_a}}[.328\textwidth]{\includegraphics[page=3]{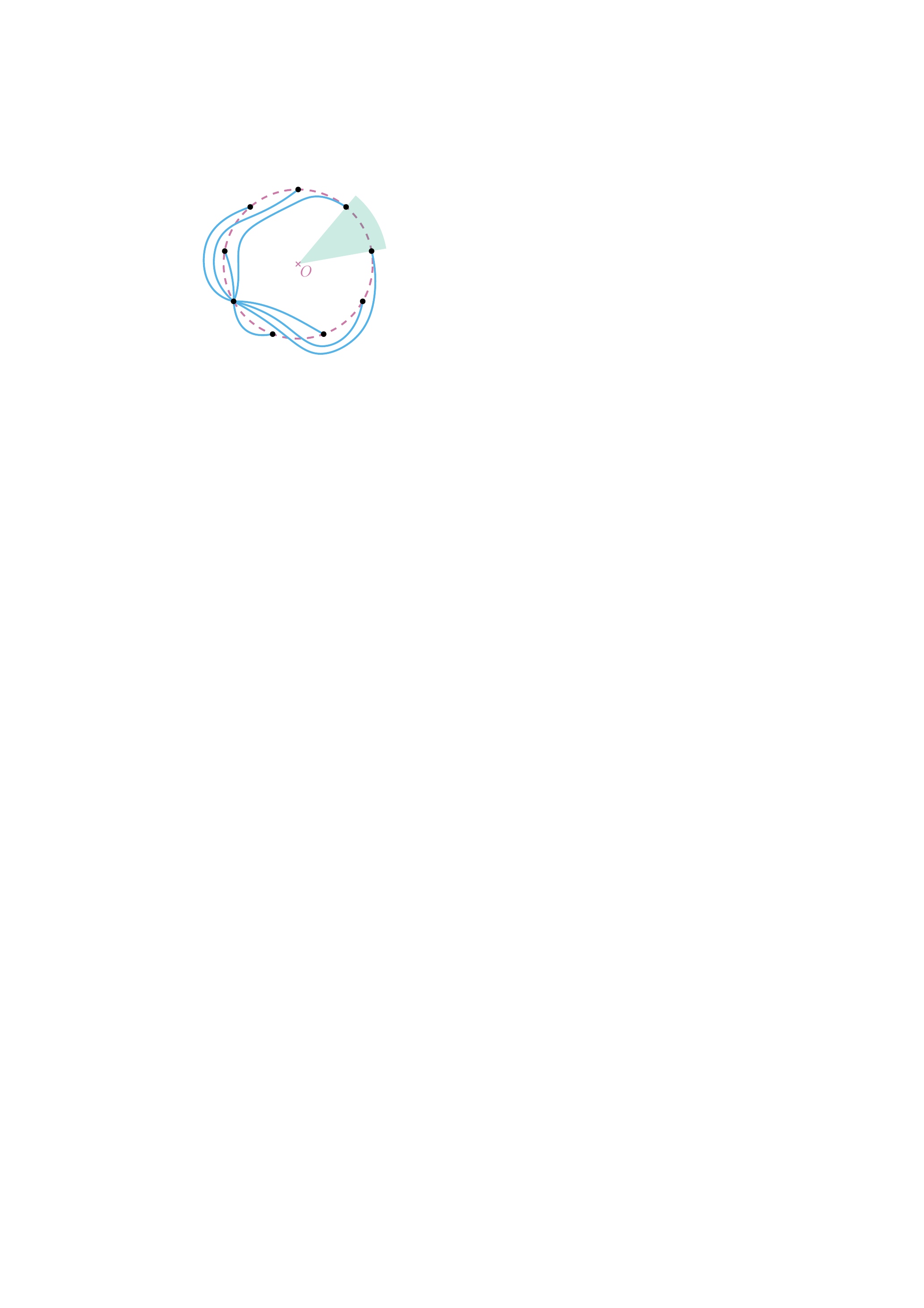}}
\subcaptionbox{\centering\label{fig:strong-c-mon_b}}[.328\textwidth]{\includegraphics[page=2]{Figures/special_c_mon.pdf}}
\subcaptionbox{\centering\label{fig:strong-c-mon_c}}[.328\textwidth]{\includegraphics[page=1]{Figures/special_c_mon.pdf}}
\caption{\textbf{(a)}~The wedge $\Lambda_{e}$ (shaded seagreen) of the lightblue edge $e$. The darkblue edge $f$ together with $e$ covers the plane, which is not allowed in a strongly c-monotone drawing. \textbf{(b)}~Illustration of \Cref{lem:strong-c-mon-for-kn}: A pair of non-incident edges (blue) covering the plane enforces a pair of incident edges covering the plane; either with the darker ($g_{1}$) or with the lighter ($g_{2}$) version of $g$. \textbf{(c)}~Condition 3 of \Cref{lem:strong-c-mon-for-kn}: The star is not crossed by any ray within the seagreen wedge.}
\label{fig:strong-c-mon}
\vspace{-\baselineskip}
\end{figure}

\subparagraph{Strongly c-monotone drawings.}

Given an edge $e$ in a c-monotone drawing, we define the \emph{wedge $\Lambda_{e}$ of~$e$} to be the closed wedge with apex~$O$ that has the rays from $O$ through the end-vertices of $e$ as its boundary and contains $e$. For a set of edges $E$ we say that the edges in $E$ \emph{cover the plane} if the union of all their wedges is the whole plane. See \Cref{fig:strong-c-mon_a} for an illustration of those terms. Clearly it needs at least two edges to cover the plane. This leads us to the definition of strongly c-monotone drawings. If no pair of edges in a c-monotone drawing $\mathcal{D}$ covers the plane, then we call $\mathcal{D}$ \emph{strongly c-monotone}. Equivalently, a c-monotone drawing is strongly c-monotone if, for each pair of edges $e$ and $f$ in $\mathcal{D}$, there exists a ray starting at~$O$ that crosses neither $e$ nor $f$. Note that all $x$-monotone drawings are also strongly c-monotone: Choose $O$ sufficiently far below the drawing such that every ray starting at $O$ crosses each edge of $\mathcal{D}$ at most once; then every ray pointing away from $\mathcal{D}$ does not cross any edge of $\mathcal{D}$. For $K_{n}$, strong c-monotonicity can be characterized as follows.

\begin{lemma}\label{lem:strong-c-mon-for-kn}
Let $\mathcal{D}$ be a c-monotone drawing of $K_{n}$. Then the following are equivalent:
\begin{enumerate}
\item No pair of edges $e$ and $f$ in $\mathcal{D}$ covers the plane.
\item No pair of incident edges $e$ and $f$ in $\mathcal{D}$ covers the plane.
\item No star in $\mathcal{D}$ covers the plane.
\end{enumerate}
\end{lemma}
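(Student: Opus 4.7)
The plan is to establish the three-way equivalence via four one-directional implications: the two straightforward ones $(1)\Rightarrow (2)$ and $(3)\Rightarrow (2)$ by set inclusion, and the two geometric ones $(2)\Rightarrow (3)$ and $(2)\Rightarrow (1)$ by contraposition. The implication $(1)\Rightarrow (2)$ is immediate because every pair of incident edges is in particular a pair of edges. For $(3)\Rightarrow (2)$, any incident pair $\{e,f\}$ sharing a vertex $v$ is contained in the star $\mathcal{S}_{v}$, so if the pair covers the plane then so does $\mathcal{S}_{v}$ (taking further wedges into the union can only enlarge it); the contrapositive yields the implication.

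For $(2)\Rightarrow (3)$, the key observation is that every wedge $\Lambda_{e}$ of an edge $e\in\mathcal{S}_{v}$ has the ray $R_{v}$ from $O$ through $v$ on its boundary. Choosing the angular coordinate so that $R_{v}$ sits at angle~$0$, each such wedge is either of the form $[0,\theta_{e}]$ or $[\theta_{e},2\pi]$ on the circle of directions. The union of all star-wedges therefore has the form $[0,M]\cup[m,2\pi]$, where $M$ is the maximum $\theta_{e}$ among wedges of the first type and $m$ the minimum $\theta_{e}$ among wedges of the second type. This equals $[0,2\pi]$ only if $m\le M$ and both types occur, in which case the two extremal edges already form an incident pair whose wedges alone cover the plane. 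Taking the contrapositive gives $(2)\Rightarrow (3)$.

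The main work is $(2)\Rightarrow (1)$, again by contraposition: assume $e=\{v_{a},v_{b}\}$ and $f=\{v_{c},v_{d}\}$ are non-incident and $\Lambda_{e}\cup\Lambda_{f}=\mathbb{R}^{2}$, and produce an incident covering pair. A short case analysis on the cyclic order of $v_{a},v_{b},v_{c},v_{d}$ around $O$ (two possibilities, \enquote{separated} and \enquote{linked}) combined with the four big/small choices for $(\Lambda_{e},\Lambda_{f})$ shows that coverage is only possible in the separated case with both wedges being the \enquote{big} ones. Thus, labelling angular coordinates $\alpha_{a}<\alpha_{b}<\alpha_{c}<\alpha_{d}$, we get $\Lambda_{e}=[\alpha_{b},2\pi]\cup[0,\alpha_{a}]$ and $\Lambda_{f}=[\alpha_{d},2\pi]\cup[0,\alpha_{c}]$. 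Now introduce the diagonal edge $g=\{v_{a},v_{c}\}$, which exists because $\mathcal{D}$ draws $K_{n}$; its two possible wedges are the small one $[\alpha_{a},\alpha_{c}]$ and the big one $[\alpha_{c},2\pi]\cup[0,\alpha_{a}]$. A direct union computation shows that in the small case $\Lambda_{e}\cup\Lambda_{g}=[0,2\pi]$, and in the big case $\Lambda_{f}\cup\Lambda_{g}=[0,2\pi]$. Since $g$ is incident to both $e$ (at $v_{a}$) and $f$ (at $v_{c}$), either outcome yields the desired incident covering pair, matching the two versions $g_{1},g_{2}$ of $g$ depicted in \Cref{fig:strong-c-mon_b}.

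The main obstacle is clean bookkeeping of the case analysis in $(2)\Rightarrow (1)$: the number of subcases is modest, but one must first justify that each wedge $\Lambda_{e}$ of a c-monotone edge can be described as an explicit angular arc (this follows from monotonicity of the angular coordinate along $e$, which in turn is a direct consequence of the c-monotone definition together with the Jordan-arc property of edges), and then carefully verify both that only the separated-big-big configuration is compatible with coverage, and that the diagonal edge $g$ succeeds in each of its two possible wedge orientations.
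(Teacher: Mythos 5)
Your proposal is correct and follows essentially the same route as the paper: the heart of the argument is identical, namely using completeness to introduce a diagonal edge $g$ between an endpoint of $e$ and an endpoint of $f$ and checking that either choice of $\Lambda_{g}$ yields an incident covering pair (you pick $\{v_{a},v_{c}\}$ where the paper picks $\{v_{a},v_{d}\}$, which is only a relabeling). Your treatment is somewhat more explicit than the paper's — you spell out the arc description of wedges, the case analysis showing only the separated big--big configuration can cover, and the extremal argument for the star equivalence, all of which the paper compresses into \enquote{without loss of generality} and \enquote{straightforward} — but the approach is the same.
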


\begin{proof}
Clearly, the first condition implies the second. For the second implying the first, assume that two non-incident edges $e = \{ v_{a}, v_{b} \}$ and $f = \{ v_{c}, v_{d} \}$ cover the plane. Then the wedges $\Lambda_{e}$ of $e$ and $\Lambda_{f}$ of $f$ intersect in two wedges with apex~$O$. Without loss of generality, they do so between $v_{a}$ and $v_{c}$, and between $v_{b}$ and $v_{d}$; see \Cref{fig:strong-c-mon_b} for an illustration. We now consider the edge $g = \{ v_{a}, v_{d} \}$. Either $g$ is contained in~$\Lambda_{e}$, then the incident edges $g$ and $f$ cover the plane. Or $g$ is contained in $\Lambda_{f}$ and the incident edges $g$ and $e$ cover the plane. This establishes equivalence of the first two conditions. The equivalence of conditions two and three is straightforward: A star covers the plane if and only if two of its edges, which are incident edges, cover the plane.
\end{proof}

Especially the third condition is quite easy to verify, at least by hand, only requiring $n$ checks instead of the $\mathcal{O}(n^{4})$ checks when considering all edge pairs; see also \Cref{fig:strong-c-mon_c}.

\subparagraph{Cylindrical drawings.}

Following the definition introduced in~\cite{aafrs-2014-sdccn}, we call a simple drawing \emph{cylindrical} if all vertices lie on two concentric circles and no edge crosses any of these two circles. The name \enquote{cylindrical} comes from picturing it as being drawn on the surface of a cylinder: The vertices are placed on the two rims and each edge lies completely in the top lid, the lateral face, or the bottom lid. This is a generalization of Hill's conjectured crossing minimal drawing of $K_{n}$, which we denote by $\mathcal{Z}_{n}$. It can be achieved by placing half of the vertices equally spaced on each rim and drawing the edges as geodesics. See \Cref{fig:hill} for a depiction and Harary and Hill~\cite{hh-1963-nccg} or Guy, Jenkyns, and Schaer~\cite{gjs-1968-tcncg} for more details.

For cylindrical drawings in the plane, as defined above, we call the area outside the outer circle, between the two circles, and inside the inner circle \emph{outer}, \emph{lateral}, and \emph{inner face}, respectively. Additionally, we call edges connecting two vertices from different circles \emph{lateral edges} and edges connecting two vertices on the inner or outer circle \emph{inner} or \emph{outer}, respectively, \emph{circle edges}. In particular, we call circle edges connecting two neighboring vertices on their circle \emph{rim edges}. See \Cref{fig:hill-cylindrical} for some illustrations of these terms.

Because the two concentric circles are not allowed to be crossed by any edge, all lateral edges have to lie in the lateral face. In contrast to that, however, inner (outer, respectively) circle edges can lie either in the inner (outer, respectively) face or in the lateral face. Also, the following observation is implied by the definition.

\begin{observation}\label{obs:cylindrical-2-page}
Let $\mathcal{D}$ be a cylindrical drawing. Then the sub-drawing of $\mathcal{D}$ induced by all vertices on one of the circles is strongly isomorphic to a $2$-page-book drawing.
\end{observation}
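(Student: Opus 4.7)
The plan is to read the observation almost directly off the definitions, using the note earlier in the section that a $2$-page-book drawing may use \emph{any} simple closed curve partitioning the plane into two components as its spine. So I would fix a circle, say the outer one, call it $C$, and consider the sub-drawing $\mathcal{D}_C$ of $\mathcal{D}$ induced by the vertices on $C$. All vertices of $\mathcal{D}_C$ lie on $C$ by construction.

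Next I would observe that every edge $e$ of $\mathcal{D}_C$ is an outer circle edge, so by the definition of a cylindrical drawing $e$ does not cross $C$. Since $C$ is a Jordan curve, its complement in the plane has exactly two connected components (the outer face, and the union of the lateral face with the inner face and the inner circle). Because $e$ is a Jordan arc whose only intersections with $C$ are its two end-vertices, the relative interior of $e$ is a connected subset of the complement of $C$ and therefore lies entirely in one of these two components. This is exactly the condition defining a $2$-page-book drawing with $C$ as the spine, so $\mathcal{D}_C$ is a $2$-page-book drawing in the generalized sense with circular spine discussed in the excerpt.

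Finally, to upgrade this to strong isomorphism with a $2$-page-book drawing in the standard (straight-spine) sense, I would apply a homeomorphism of the sphere that maps $C$ to a great circle, i.e.\ to a line in the plane after a suitable choice of point at infinity, keeping the two sides of $C$ as the two pages. Such a homeomorphism exists because any two simple closed curves on the sphere are ambient-isotopic, and it realises the required strong isomorphism.

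I do not expect any real obstacle here; the main point is conceptual rather than technical, namely that the defining property of cylindrical drawings (edges cannot cross the bounding circles) is precisely what is needed for the circle to play the role of a $2$-page-book spine. The only small thing to be careful about is to restrict attention to the induced sub-drawing, since lateral edges and edges on the other circle do cross into other regions and would spoil the $2$-page-book property if included.
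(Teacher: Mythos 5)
Your proposal is correct and matches the paper's intent: the observation is stated there without proof as being \enquote{implied by the definition}, precisely because the circle is an uncrossed spine whose two sides serve as the pages, exactly as you argue. Your extra step of normalising the circular spine to a straight one via a sphere homeomorphism is a harmless (and slightly more careful) elaboration of what the paper already licenses by allowing circular spines.
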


\begin{figure}[h]
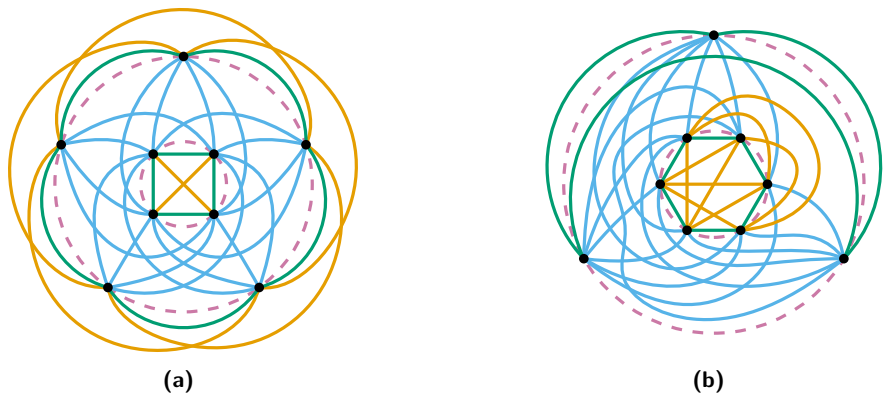

\centering
\subcaptionbox{\centering\label{fig:hill}}[.49\textwidth]{\includegraphics[page=11]{Figures/special_c_mon.pdf}}
\subcaptionbox{\centering\label{fig:cylindrical}}[.49\textwidth]{\includegraphics[page=12]{Figures/special_c_mon.pdf}}
\caption{\textbf{(a)}~Hill's drawing $\mathcal{Z}_{9}$ drawn in the plane. The two concentric circles are drawn violet, the rim edges are seagreen, all other circle edges are orange, and the lateral edges are lightblue. \textbf{(b)}~An arbitrary (not strongly) cylindrical drawing of $K_{9}$ with the same color coding.}
\label{fig:hill-cylindrical}
\vspace{-\baselineskip}
\end{figure}

\subparagraph{Strongly cylindrical drawings.}

If in a cylindrical drawing all inner circle edges lie in the inner face and all outer circle edges lie in the outer face, then we call the drawing \emph{strongly cylindrical}. Especially, Hill's drawing $\mathcal{Z}_{n}$ is strongly cylindrical. In the literature the term \enquote{cylindrical drawing} is also used for what we call strongly cylindrical drawing. Further, Ruiz-Vargas~\cite{r-2017-mdetg} defines arbitrary drawings on the surface of an infinite open cylinder as \enquote{cylindrical drawings} and his \enquote{monotone cylindrical drawings} are equivalent to our c-monotone drawings.

\begin{figure}[h]
\centering
\subcaptionbox{\centering\label{fig:G5}}[.194\textwidth]{\includegraphics[page=2]{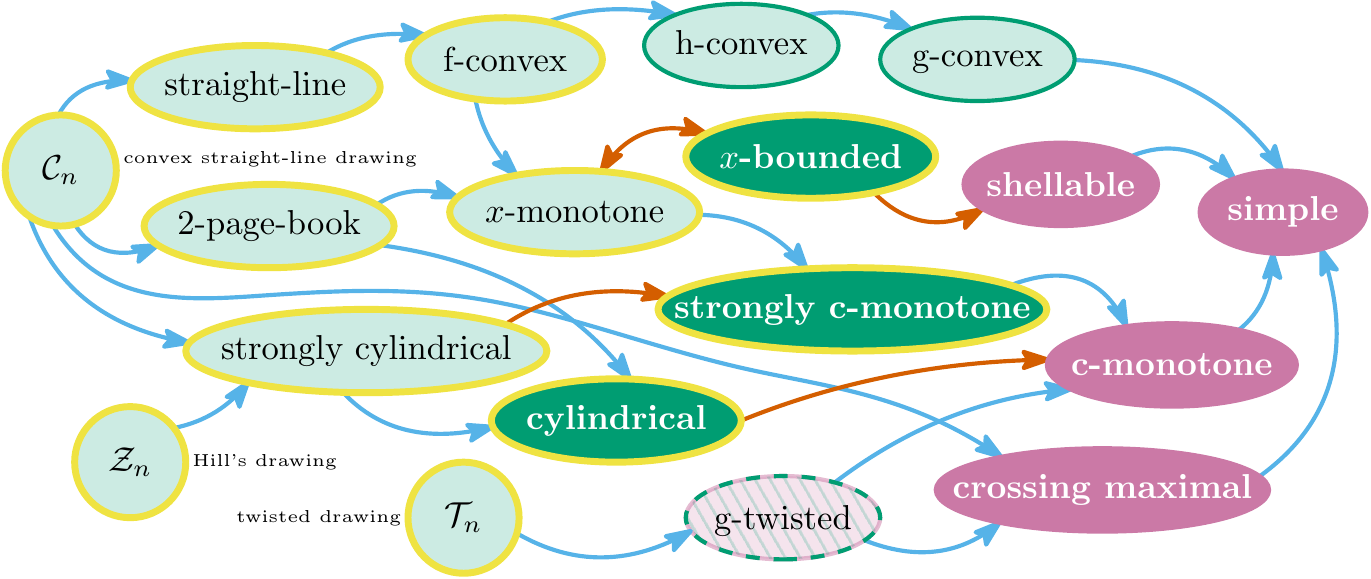}}
\subcaptionbox{\centering\label{fig:G3}}[.194\textwidth]{\includegraphics[page=3]{Figures/special_class_order.pdf}}
\subcaptionbox{\centering\label{fig:G1}}[.194\textwidth]{\includegraphics[page=4]{Figures/special_class_order.pdf}}
\subcaptionbox{\centering\label{fig:T3}}[.194\textwidth]{\includegraphics[page=5]{Figures/special_class_order.pdf}}
\subcaptionbox{\centering\label{fig:T5}}[.194\textwidth]{\includegraphics[page=6]{Figures/special_class_order.pdf}}
\caption{The five non-isomorphic simple drawings of $K_{5}$: \textbf{(a)}~The convex straight-line drawing~$\mathcal{C}_{5}$. \textbf{(a) -- (c)}~The three types of straight-line drawings of $K_{5}$. \textbf{(c)}~In some sense Hill's drawing~$\mathcal{Z}_{5}$. \textbf{(e)}~The twisted drawing~$\mathcal{T}_{5}$. \textbf{(a)~and~(e)}~The two crossing maximal drawings of $K_{5}$.}
\label{fig:five-K5s}
\vspace{-\baselineskip}
\end{figure}

\subparagraph{Crossing maximal drawings.}

In a simple drawing $\mathcal{D}$ of $K_{n}$ any sub-drawing induced by~$4$ vertices can admit at most one crossing. If every $4$-tuple of vertices induces a crossing and hence  $\mathcal{D}$ contains exactly $\binom{n}{4}$ crossings, then we call $\mathcal{D}$ \emph{crossing maximal}. For example, the drawings of $K_{5}$ in \Cref{fig:G5,fig:T5}, and in general $\mathcal{C}_{n}$, are crossing maximal.

\begin{figure}[b]
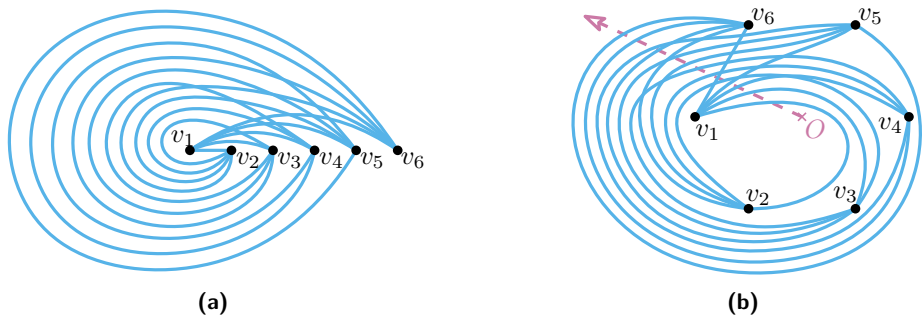

\centering
\subcaptionbox{\centering\label{fig:twisted_a}}[.49\textwidth]{\includegraphics[page=9]{Figures/special_c_mon.pdf}}
\subcaptionbox{\centering\label{fig:twisted_b}}[.49\textwidth]{\includegraphics[page=10]{Figures/special_c_mon.pdf}}
\caption{The twisted drawing $\mathcal{T}_{6}$ drawn \textbf{(a)}~in the usual way and \textbf{(b)}~as a c-monotone drawing.}
\label{fig:twisted}
\end{figure}

\subparagraph{Twisted and generalized twisted drawings.}

In their study of crossing maximal drawings, Harborth and Mengersen~\cite{hm-1992-dcgmnc} introduced another special drawing of~$K_{n}$, which generalizes the drawing in \Cref{fig:T5}. Harborth~\cite{h-1998-etdcg} further analyzed that drawing in the context of empty triangles and Pach, Solymosi, and T{\'o}th~\cite{pst-2003-ucctg} later named it twisted. A simple drawing of $K_{n}$ is a \emph{twisted drawing} if there exists an order on its vertices such that two edges cross if and only if they are nested. This uniquely defines the twisted drawing of~$K_{n}$, which we denote by~$\mathcal{T}_{n}$, up to weak isomorphism and we use this defining order to label its vertices from $v_{1}$ to~$v_{n}$. One possible way of realizing this special crossing property is shown in \Cref{fig:twisted_a}. Also, the drawing in \Cref{fig:x-mon-example} is actually an $x$-monotone representation of~$\mathcal{T}_{6}$; for $n \geq 7$ this is not possible anymore. However, $\mathcal{T}_{n}$ can be drawn c-monotone such that there exists a ray starting at $O$ that crosses all edges; \Cref{fig:twisted_b} shows an example. This is the defining property for the following generalization of twisted drawings~\cite{agtvw-2022-twfpssdcg}: A c-monotone drawing~$\mathcal{D}$ of $K_{n}$ is called \emph{generalized twisted} (short \emph{g-twisted}) if there exists a ray starting at $O$ that crosses all edges of $\mathcal{D}$. It has been shown~\cite{agtvw-2023-crsgtd5t} that a simple drawing $\mathcal{D}$ of $K_{n}$, for $n \geq 7$, is weakly isomorphic to a g-twisted drawing if and only if every sub-drawing of~$\mathcal{D}$ induced by $5$ vertices is weakly isomorphic to the twisted drawing $\mathcal{T}_{5}$. This especially implies that all g-twisted drawings are crossing maximal drawings. For more information on g-twisted drawings we refer the interested reader to~\cite{agtvw-2022-twfpssdcg,agtvw-2023-crsgtd5t,gtvw-2023-etgtd}.

\vspace{-1pt}
\subparagraph{Generalized convex drawings.}

Arroyo, McQuillan, Richter, and Salazar~\cite{amrs-2021-cdcgtg} introduced the following class of simple drawings: Every triangle in a simple drawing separates the plane into two connected components, the closures of which are called the \emph{sides} of the triangle. A side $S$ of a triangle is called \emph{convex} if, for each pair of vertices $v_{a}$ and $v_{b}$ in~$S$, the edge $\{ v_{a}, v_{b} \}$ is completely contained in~$S$. We call a simple drawing $\mathcal{D}$ of $K_{n}$ \emph{generalized convex}\footnote{Arroyo et al.~\cite{amrs-2018-llpdet,amrs-2021-cdcgtg,ars-2021-edcgap} called them just \enquote{convex drawings}. But because in these drawings both sides of a triangle can be convex we prefer to call them \enquote{generalized convex drawings}.} (short \emph{g-convex}) if every triangle in $\mathcal{D}$ has a convex side. It has been shown~\cite[Theorem~2.6]{amrs-2021-cdcgtg} that a drawing is g-convex if and only if every sub-drawing induced by $5$ vertices is weakly isomorphic to one of the three types of straight-line drawings of $K_{5}$; see \Cref{fig:G5,fig:G3,fig:G1}.

Further, there exists the sub-class of \emph{hereditarily convex drawings} (short \emph{h-convex}) where the convex side of triangles is inherited through inclusion. Arroyo, Richter, and Sunohara~\cite{ars-2021-edcgap} showed that, for $K_{n}$, this is equivalent to so-called \emph{pseudospherical drawings}, a generalization of drawings on the sphere with the edges drawn as geodesics. Going down one more step, an h-convex drawing $\mathcal{D}$ is called \emph{face convex} (short \emph{f-convex}) if there exists a cell $F$ in $\mathcal{D}$ such that for every triangle in $\mathcal{D}$ the side not containing $F$ is convex. For $K_{n}$, Arroyo, McQuillan, Richter, and Salazar~\cite{amrs-2018-llpdet} proved this to be equivalent to the well-known \emph{pseudolinear drawings}. And those are in turn a generalization of straight-line drawings and a sub-class of $x$-monotone drawings. For more information on g-convex drawings and their sub-classes see~\cite{amrs-2018-llpdet,amrs-2021-cdcgtg,ars-2021-edcgap}.

\subparagraph{Overview.}

We finish this section with an overview on all the introduced drawing classes and an outlook on what we will show in the following sections. Since a picture is worth a thousand words, we do so with \Cref{fig:the-class-order}.

\begin{figure}[h]
\centering
\includegraphics[page=1]{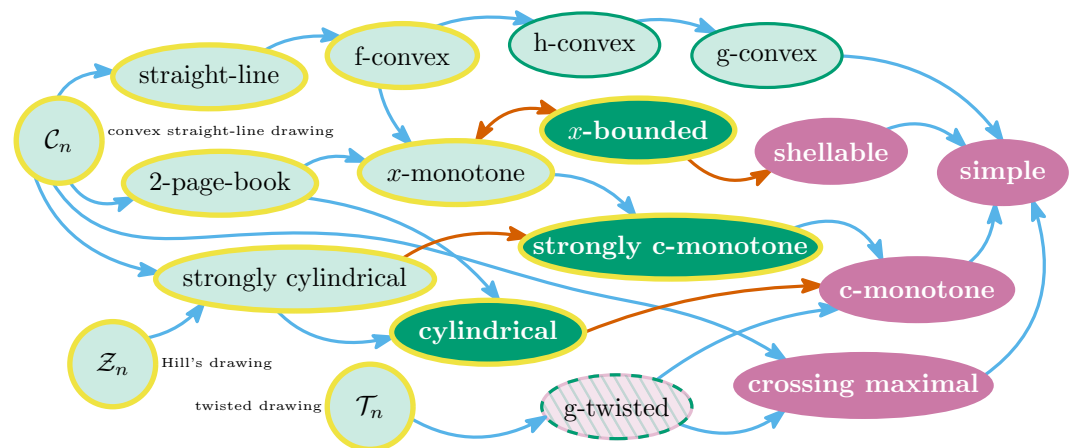}
\caption{An overview on different classes of simple drawings of $K_{n}$. Single arrows indicate that the source class is a proper subset of the target class, regarding weak isomorphism; a double arrow means equality. Each darkorange arrow indicates a result shown in this work. The seagreen disks mark classes for which \Cref{conj:main} is (now) proven; we focus on the darker seagreen ones. All classes for which we verify \Cref{conj:stronger} are emphasized with a yellow boundary. For violet classes both conjectures are still open, while for g-twisted drawings \Cref{conj:main} is partially proven.}
\label{fig:the-class-order}
\vspace{-0.8\baselineskip}
\end{figure}

\section{Crossing-free Hamiltonian paths between all vertex pairs}\label{sec:allpairs}

This section is devoted to showing that \Cref{conj:stronger} is indeed a strengthening of \Cref{conj:main} and to proving these conjectures for strongly c-monotone drawings and cylindrical drawings. Since the question regarding a crossing-free Hamiltonian path between each pair of vertices is new (\Cref{conj:stronger}), it is prudent to check whether this actually makes sense. It~can be shown that in every straight-line drawing of $K_{n}$ all those crossing-free Hamiltonian paths actually exist. The main idea for this is to visit all vertices in circular order around some start-vertex. We leave a formal proof to the reader because the statement also follows from more general results later in this paper.

As another sanity check, using our rotation system database~\cite{aafhpprsv-2015-agdscg}, we can confirm \Cref{conj:stronger} for all simple drawings on $n \leq 9$ vertices. Moreover, Manfred Scheucher (personal communication) used the SAT framework from~\cite{bfrs-2023-usspssd} to confirm \Cref{conj:stronger} for all simple drawings on $n \leq 10$ vertices and for all g-convex drawings on $n \leq 12$ vertices.

A problem related to prespecifying both end-vertices of a crossing-free Hamiltonian path~$\mathcal{P}$ would be to fix a certain edge to be part of $\mathcal{P}$. In straight-line drawings of $K_{n}$ this is easily possible because the supporting line of an edge splits the vertex set into two parts with disjoint convex hulls. Moreover, in Theorem 4.1 of the arXiv version of~\cite{bfrs-2023-usspssd}, Bergold, Felsner, M.\ Reddy, and Scheucher showed that even in g-convex drawings $\mathcal{D}$ of $K_{n}$ there exists, for each edge $e$ in $\mathcal{D}$, a crossing-free Hamiltonian path containing $e$. However, in an arbitrary simple drawing of $K_{n}$ this is not possible. For example, choose the edge $\{ v_{1} , v_{n} \}$ in the twisted drawing $\mathcal{T}_{n}$: Since it crosses all non-incident edges, it cannot be part of any crossing-free path with more than three edges.

\subsection{All paths conjecture implies cycle conjecture}\label{sec:paths-imply-cycle}

As a first main result of this paper we establish a connection between \Cref{conj:main,conj:stronger}. We start by observing that every completely uncrossed edge in a simple drawing $\mathcal{D}$ can be added to a crossing-free sub-drawing of $\mathcal{D}$ without introducing any new crossings.

\begin{observation}\label{obs:conjectures-uncrossed-edge}
Let $\mathcal{D}$ be a simple drawing of $K_{n}$ with $n \geq 3$ that contains a completely uncrossed edge $e = \{ v_{a}, v_{b} \}$ and a crossing-free Hamiltonian path with end-vertices $v_{a}$ and $v_{b}$. Then $\mathcal{D}$ contains a crossing-free Hamiltonian cycle.
\end{observation}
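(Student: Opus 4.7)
The plan is essentially to close the given crossing-free Hamiltonian path into a Hamiltonian cycle by adding the completely uncrossed edge $e$, and then to verify that no crossings are introduced by this addition. The proof should be very short, amounting to little more than unpacking the definitions.

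First I would fix the crossing-free Hamiltonian path $\mathcal{P}$ with end-vertices $v_{a}$ and $v_{b}$ whose existence is assumed in the hypothesis, and consider the sub-drawing $\mathcal{P} \cup \{e\}$. Since $\mathcal{P}$ visits all $n$ vertices and $e$ joins its two end-vertices, $\mathcal{P} \cup \{e\}$ forms a Hamiltonian cycle provided that $e$ is not already an edge of $\mathcal{P}$. The latter is where the assumption $n \geq 3$ enters: the path $\mathcal{P}$ has $n-1 \geq 2$ edges, so $v_{a}$ and $v_{b}$ are connected in $\mathcal{P}$ through at least one intermediate vertex, and hence the edge $\{v_{a}, v_{b}\} = e$ cannot coincide with any edge of $\mathcal{P}$. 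Thus $\mathcal{P} \cup \{e\}$ is indeed a Hamiltonian cycle of $K_{n}$.

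It remains to check that $\mathcal{P} \cup \{e\}$ is crossing-free. The edges of $\mathcal{P}$ do not cross each other because $\mathcal{P}$ is crossing-free by assumption. Moreover, $e$ is completely uncrossed in $\mathcal{D}$, which by definition means that no edge of $\mathcal{D}$ crosses $e$; in particular, no edge of $\mathcal{P}$ crosses $e$. Combining the two observations, no two edges of $\mathcal{P} \cup \{e\}$ cross, so this Hamiltonian cycle is crossing-free, as desired.

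There is really no hard part here; the whole point of the observation is to record this one-line argument so it can be invoked in the subsequent theorem showing that \Cref{conj:stronger} implies \Cref{conj:main}. The only subtlety worth spelling out is the need for $n \geq 3$ to ensure that $e$ is genuinely a new edge relative to $\mathcal{P}$, since for $n = 2$ the statement would be vacuous/degenerate (the path would consist of the single edge $e$ itself, and one cannot close it into a cycle in a simple graph).
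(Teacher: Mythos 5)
Your argument is correct and matches the paper's intent exactly: the paper states this as an observation without a formal proof, relying on precisely the one-line reasoning you give (close the path with the completely uncrossed edge $e$, which by definition is crossed by no edge of $\mathcal{D}$ and in particular by no edge of the path). Your additional remark on why $n \geq 3$ ensures $e$ is not already an edge of the path is a reasonable bit of extra care that the paper leaves implicit.
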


This observation establishes that \Cref{conj:stronger} is at least as strong as \Cref{conj:main} for all drawings containing a completely uncrossed edge. This includes, for example, $x$-bounded drawings because with the vertices ordered from left to right, two separated edges can never cross in an $x$-bounded drawing. Especially, we have the following.

\begin{observation}\label{obs:x-bound-uncrossed}
Let $\mathcal{D}$ be an $x$-bounded drawing of $K_{n}$ with vertices $v_{1}, \ldots, v_{n}$ labeled from left to right. Then the edges $\{ v_{1}, v_{2} \}$ and $\{ v_{n-1}, v_{n} \}$ are completely uncrossed.
\end{observation}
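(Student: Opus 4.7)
The plan is to prove the claim for $\{v_{1},v_{2}\}$ directly by checking that every other edge either shares a vertex with it (and hence cannot cross it, since the drawing is simple) or lies in a vertical strip disjoint from the one containing $\{v_{1},v_{2}\}$. The case $\{v_{n-1},v_{n}\}$ follows by a symmetric argument.

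More concretely, I would first recall that in an $x$-bounded drawing the vertices can be assumed to lie on a common horizontal line with strictly increasing $x$-coordinates $x_{1}<x_{2}<\ldots<x_{n}$, and that every edge $\{v_{i},v_{j}\}$ with $i<j$ is contained in the closed vertical strip $S_{ij}:=\{(x,y):x_{i}\le x\le x_{j}\}$. Then I would fix an arbitrary edge $e=\{v_{i},v_{j}\}\neq\{v_{1},v_{2}\}$ and split into two cases. If $e$ is incident to $\{v_{1},v_{2}\}$ (that is, $i\in\{1,2\}$ or $j\in\{1,2\}$), then $e$ and $\{v_{1},v_{2}\}$ share an end-vertex and therefore cannot cross in a simple drawing. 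Otherwise $i\geq 3$, so $e\subseteq S_{ij}\subseteq\{x\ge x_{3}\}$, while $\{v_{1},v_{2}\}\subseteq S_{12}\subseteq\{x\le x_{2}\}$; since $x_{2}<x_{3}$, the two strips are disjoint and $e$ cannot cross $\{v_{1},v_{2}\}$.

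Combining the two cases shows that no edge of $\mathcal{D}$ crosses $\{v_{1},v_{2}\}$, so this edge is completely uncrossed. An entirely analogous argument, using that any edge not incident to $\{v_{n-1},v_{n}\}$ has both endpoints among $v_{1},\ldots,v_{n-2}$ and thus lies in $\{x\le x_{n-2}\}$ while $\{v_{n-1},v_{n}\}$ lies in $\{x\ge x_{n-1}\}$, yields the statement for $\{v_{n-1},v_{n}\}$.

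There is no real obstacle here; the argument relies only on unpacking the definition of $x$-bounded and on the strict ordering of the $x$-coordinates. The only thing worth being careful about is to keep the two cases (incident vs.\ non-incident edge) clearly separated, since incident edges trivially satisfy the vertical-strip condition with a one-point overlap at the shared vertex, and one should rule out crossings there by appealing to simplicity rather than to the strip argument.
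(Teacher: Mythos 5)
Your argument is correct and matches the paper's (implicit) reasoning: the paper derives this observation from the fact that separated edges cannot cross in an $x$-bounded drawing, which is exactly your disjoint-vertical-strips case, combined with simplicity for incident edges. Nothing further is needed.
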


We next show that also every cylindrical drawing contains completely uncrossed edges. For a circle edge $e = \{ v_{a}, v_{b} \}$ in the lateral face and end-vertices on circle~$C$, we consider the unique area $F$ that is completely contained in the lateral face, and only bounded by $e$ and a part of $C$; see \Cref{fig:cylindrical-crossings} for an example illustration. We say that $e$ \emph{guards a vertex} $v$ on $C$ if $v$ lies on the boundary of $F$, that is, $e$ guards all vertices between and including $v_{a}$ and $v_{b}$ on $C$, either in clockwise or in counter-clockwise direction. In particular, $e$ always guards its own end-vertices. Similar to \Cref{obs:2-page-crossings} we get the following.

\begin{observation}\label{obs:cylindrical-crossings}
Let $e$ be a circle edge in the lateral face of a cylindrical drawing $\mathcal{D}$. Let $f$ be another edge of $\mathcal{D}$ that is non-incident to $e$ and also lies in the lateral face of $\mathcal{D}$. Then $e$ and $f$ cross if and only if $e$ guards exactly one end-vertex of $f$.
\end{observation}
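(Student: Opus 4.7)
First I would reduce to the case where $e = \{v_{a}, v_{b}\}$ lies on the outer circle $C$, let $A$ be the arc of $C$ that together with $e$ bounds $F$, and set $J := e \cup A$. Then $J$ is a Jordan curve and, by the Jordan curve theorem, splits the plane into the open region $F$ and its open complement $U$. Because $F$ is contained in the open lateral face and every vertex of $\mathcal{D}$ lies on one of the two concentric circles, no vertex lies in $F$. Hence the vertices on $J$ are precisely the vertices guarded by $e$---namely $v_{a}$, $v_{b}$, and the vertices in the relative interior of $A$---while every other vertex of $\mathcal{D}$ lies in $U$.

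The main tool would then be a parity argument combined with the fact that, by simplicity of $\mathcal{D}$, the edges $e$ and $f$ meet in at most one point. Since $f$ does not cross $C$ in its interior and $A \subset C$, the interior of $f$ meets $J$ only along $e$, and hence at most once. By the non-incidence assumption the endpoints of $f$ differ from $v_{a}$ and $v_{b}$, so each one lies either on $A$ (and is guarded) or in $U$. The plan is then to case-split on the number of guarded endpoints of $f$. If none is guarded, both endpoints lie in the same component $U$, so the usual parity of transversal crossings of a Jordan arc with a Jordan curve forces an even number of crossings of $f$ with $J$, which must therefore be zero. If both endpoints are guarded, then as argued below $f$ leaves each of them locally into $F$, and any crossing with $e$ would force $f$ to exit $F$ and later re-enter it, violating the at-most-one bound. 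If exactly one endpoint is guarded, then $f$ starts locally in $F$ and must reach a point in $U$, so it has to cross $J$, which it can only do along $e$.

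The step that I expect to be the main obstacle is the local topological claim that, at a guarded endpoint $v$ in the relative interior of $A$, the edge $f$ enters $v$ from within $F$ rather than from within $U$. I would handle this by picking a neighborhood $N$ of $v$ so small that $J \cap N$ reduces to a sub-arc of $A$, which is possible because $e$ is bounded away from $v$. This sub-arc splits $N$ into an outer-face part, which clearly lies in $U$, and a lateral-face part, which therefore lies in $F$. Since $f$ is drawn in the lateral face, its approach to $v$ has to go through this $F$-side. Once this local statement is set up cleanly, the three cases above correspond exactly to \enquote{zero, one, or two guarded endpoints of $f$} and give the stated equivalence.
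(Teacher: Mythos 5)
Your proof is correct and is essentially the argument the paper has in mind: the paper states this as an observation "similar to" \Cref{obs:2-page-crossings}, which it derives from exactly the two facts you use, namely that the circles are never crossed by any edge (so the Jordan curve $e\cup A$ can only be traversed along $e$) and that two edges of a simple drawing cross at most once. Your write-up just makes the parity/case analysis and the local claim at a guarded endpoint explicit, which the paper omits.
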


We further say that $e$ \emph{guards another edge} $f$ if $f$ also lies in the lateral face and both its end-vertices are guarded by $e$, which is equivalent to $f$ lying completely in $F$. It therefore follows that two edges cannot mutually guard one another.

\begin{observation}\label{obs:guarding-edges}
Let $e$ be a circle edge in the lateral face of a cylindrical drawing $\mathcal{D}$, let $V_{e}$ be the set of vertices guarded by $e$, and let $e$ guard another circle edge $f$. Then the set of vertices guarded by $f$ is a proper subset of $V_{e}$.
\end{observation}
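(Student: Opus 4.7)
The plan is to show that the bounding region $F_f$ of $f$ sits nested inside the bounding region $F_e$ of $e$, so that $V_f$ is determined by a sub-arc of the arc that defines $V_e$. Let $C$ denote the circle containing $e=\{v_a,v_b\}$, and let $\alpha_e$ be the sub-arc of $C$ such that $F_e$ is bounded by $e$ together with $\alpha_e$; by definition, $V_e$ consists exactly of the vertices lying on $\alpha_e$. Write $f=\{v_c,v_d\}$. Because $e$ guards $f$, both $v_c$ and $v_d$ lie on $\alpha_e\subseteq C$, and the arc $f$ is contained entirely in $F_e$.

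The key step is to split $F_e$ using $f$. Since $f$ is a Jordan arc in the simply connected region $\overline{F_e}$ with both endpoints on its boundary, it partitions $F_e$ into two regions. One of these, call it $R$, is bounded by $f$ together with the sub-arc $\alpha'\subseteq\alpha_e$ connecting $v_c$ to $v_d$, while the other is bounded by $f$, the remaining portion(s) of $\alpha_e$, and $e$. I would then argue that $F_f=R$: the region $R$ lies entirely in the lateral face (being a subset of $F_e$) and is bounded by $f$ together with a sub-arc of $C$, so it matches the defining description of $F_f$. The only other candidate for $F_f$ would come from closing $f$ up with the complementary sub-arc of $C$ between $v_c$ and $v_d$; but together with $f$ this complementary sub-arc encloses the inner circle, so the corresponding region is not entirely contained in the lateral face and is ruled out.

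Once $F_f=R$ is established, the vertices guarded by $f$ are exactly those on $\alpha'$, so $V_f\subseteq V_e$. For proper inclusion, observe that the endpoints of $\alpha'$ are $v_c$ and $v_d$ while those of $\alpha_e$ are $v_a$ and $v_b$; since $e\neq f$ forbids $\{v_a,v_b\}=\{v_c,v_d\}$, at least one of $v_a,v_b$ is neither an endpoint of $\alpha'$ nor, being extreme on $\alpha_e$, in its interior, and hence lies in $V_e\setminus V_f$. The only real obstacle is the Jordan-curve identification of $F_f$ with the nested region $R$; the rest is a straightforward check on arcs and endpoints.
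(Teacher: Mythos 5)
Your argument is correct and is exactly the nesting argument the paper leaves implicit (the statement is given as an observation without proof, following from the remark that $e$ guarding $f$ means $f$ lies completely in $F_e$): $f$ splits $\overline{F_e}$ along a Jordan arc, the piece bounded by $f$ and the sub-arc of $C$ between $v_c$ and $v_d$ is the only candidate for $F_f$ lying in the lateral face, and properness follows since at least one endpoint of $e$ is missed. The only detail worth noting is that ruling out the complementary sub-arc of $C$ should be phrased so it covers both the case where $C$ is the inner and where it is the outer circle (in either case that region contains the inner face, hence leaves the lateral face), which your sketch essentially does.
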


\begin{figure}[h]
\centering
\includegraphics[page=13]{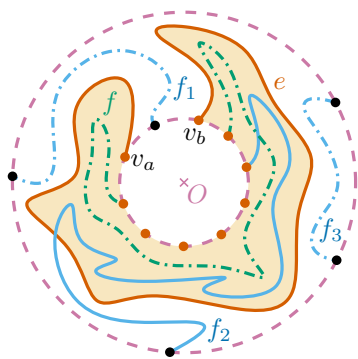}
\caption{The orange circle edge $e$ guards all orange vertices and the seagreen edge $f$. The area $F$ is shaded lightorange. In support of \Cref{obs:cylindrical-crossings}, three more examples for edges $f_{i}$ are drawn in lightblue. The solid edge $f_{2}$ crosses $e$, whereas all dash dotted edges do not cross $e$.}
\label{fig:cylindrical-crossings}
\end{figure}

With these observations we can now show the following fundamental structural statement for cylindrical drawings, which we also use later for most of the results on these drawings.

\begin{lemma}\label{lem:cylindrical-rim}
Let $\mathcal{D}$ be a cylindrical drawing. Then all but at most two rim edges in $\mathcal{D}$, at most one per circle, are completely uncrossed.
\end{lemma}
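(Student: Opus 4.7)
The plan is to prove the stronger statement that \emph{every} rim edge in $\mathcal{D}$ is completely uncrossed, from which the lemma follows immediately. Let $e = \{v_a, v_b\}$ be a rim edge on some circle $C$, and let $\alpha$ denote the unique arc of $C$ between $v_a$ and $v_b$ that contains no other vertex (which exists because $v_a$ and $v_b$ are neighbors on $C$). I would distinguish two cases according to the face of $\mathcal{D}$ in which $e$ is drawn: either the face ``outside'' $C$ (the outer face if $C = C_\textup{out}$, the inner face if $C = C_\textup{in}$), or the lateral face.

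If $e$ lies in the outside face, then by \Cref{obs:cylindrical-2-page} the sub-drawing induced by the vertices on $C$ is (strongly isomorphic to) a $2$-page-book drawing with spine $C$, and $e$ lies in one of its two pages. By \Cref{obs:2-page-crossings}, two edges of this sub-drawing cross only if they are in the same page and are linked along the spine; but the endpoints of $e$ are adjacent on $C$, so $e$ cannot be linked with any other edge, and hence no edge of the sub-drawing crosses $e$. Any edge of $\mathcal{D}$ involving a vertex on the other circle has its interior in the lateral or opposite face, disjoint from $e$'s interior, and so also cannot cross $e$. If instead $e$ lies in the lateral face, I would apply \Cref{obs:cylindrical-crossings}: a non-incident edge $f$ in the lateral face crosses $e$ iff $e$ guards exactly one endpoint of $f$. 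The region $F_e$ from the definition of guarding---the unique area in the lateral face bounded only by $e$ and a part of $C$---must be the small ``lens'' bounded by $e$ and $\alpha$, since the only competing region would also be bounded by the other circle and therefore fails the requirement. Consequently $V_e = \{v_a, v_b\}$, and since every non-incident edge $f$ has both of its endpoints outside $\{v_a, v_b\}$, $e$ guards zero endpoints of $f$; so $e$ crosses no non-incident lateral-face edge. Edges in the outer or inner face again cannot cross $e$ for interior-disjointness reasons.

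The hardest step is identifying $F_e$ with the small lens in the lateral-face case: a priori, one should worry that $e$ ``wraps around'' the other circle, which would put a different region on the short-arc side of $e$ and enlarge $V_e$. The plan is to dispose of this by the classical topological fact that any simple arc in an annulus with both endpoints on a single boundary component is boundary-parallel, i.e., has winding number zero around the other boundary. This can be proved by lifting $e$ to the universal cover of the lateral-face annulus and applying an intermediate-value argument to the radial coordinate: if $e$ had nonzero winding, then two points of the lift at angular difference exactly $2\pi$ would be forced to share the same radius, yielding a self-intersection and contradicting that $e$ is a simple arc.
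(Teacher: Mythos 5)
Your first case (a rim edge drawn in the inner or outer face) is fine and agrees with the paper. The second case contains a genuine error: the strengthened claim that \emph{every} rim edge is completely uncrossed is false, and the step that fails is the identification of $F_e$ with the small lens over the empty arc $\alpha$. The topological fact you invoke is correct as far as it goes---a simple arc in the lateral annulus with both endpoints on $C$ cuts off a disk and has angular span less than $2\pi$---but it does not determine \emph{which} of the two arcs of $C$ bounds that disk together with $e$. If $v_a$ and $v_b$ sit at angles $0$ and $\varepsilon$, the total angular change along $e$ may be either $\varepsilon$ or $\varepsilon-2\pi$; in the latter case $e$ travels the ``long way'' around inside the lateral face, never touching the other circle, and $F_e$ is bounded by $e$ together with the \emph{long} arc of $C$, so $e$ guards every vertex of $C$. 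Such a rim edge can genuinely be crossed, e.g.\ by a lateral edge from a guarded vertex of $C$ to the other circle, which must cross $e$ by \Cref{obs:cylindrical-crossings}; this situation occurs in \Cref{fig:cylindrical} and is explicitly handled (``a rim edge $f_{1}$ with crossings'') in the proof of \Cref{thm:all-hp-cylindrical}. It is precisely why the lemma carries the ``all but at most two, at most one per circle'' caveat instead of your stronger statement.

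What your proposal is missing is the counting argument for why at most one rim edge per circle can be of this bad type. Since $F_e$ is bounded by $e$ and one of the two arcs of $C$ between $v_a$ and $v_b$, a rim edge in the lateral face guards either exactly its own end-vertices (short arc) or all vertices of $C$ (long arc). In the first case it is uncrossed by \Cref{obs:cylindrical-crossings}, as you argue. In the second case $e$ guards every other rim edge of $C$ that lies in the lateral face, and by \Cref{obs:guarding-edges} each such guarded rim edge guards a \emph{proper} subset of $V_e$, hence only its own end-vertices, and is therefore uncrossed. So on each circle at most one rim edge (the one guarding everything) can have crossings. Replacing your lens identification with this dichotomy-plus-counting step yields the paper's proof.
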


\begin{proof}
Consider, without loss of generality, the inner circle. First, if a rim edge $e$ lies in the inner face, then $e$ is uncrossed by \Cref{obs:2-page-crossings,obs:cylindrical-2-page}. Further, if $e$ lies in the lateral face, then there are two more cases. Either $e$ only guards its own end-vertices, then it is uncrossed by \Cref{obs:cylindrical-crossings}. Or $e$ guards all vertices on the inner circle. Then $e$ can have crossings but $e$ also guards all other inner rim edges that lie in the lateral face. Especially, by \Cref{obs:guarding-edges}, all those rim edges guarded by $e$ only guard their own end-vertices. Hence, by the first two cases, all other inner rim edges are uncrossed.
\end{proof}

\begin{corollary}\label{cor:cylindrical-conjectures}
For every cylindrical or $x$-bounded drawing, as well as every other drawing containing a completely uncrossed edge, \Cref{conj:stronger} implies \Cref{conj:main}.
\end{corollary}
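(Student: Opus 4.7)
The plan is to combine the three ingredients already assembled in this subsection: Observation \ref{obs:conjectures-uncrossed-edge} (a completely uncrossed edge plus a crossing-free Hamiltonian path between its endpoints yields a crossing-free Hamiltonian cycle), Observation \ref{obs:x-bound-uncrossed} (every $x$-bounded drawing has a completely uncrossed rim edge), and Lemma \ref{lem:cylindrical-rim} (every cylindrical drawing has at least one completely uncrossed rim edge).

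Concretely, I would start from a drawing $\mathcal{D}$ of $K_n$ with $n\ge 3$ that falls into one of the three cases in the corollary and, in each case, exhibit a completely uncrossed edge $e=\{v_a,v_b\}$: in the $x$-bounded case this is $\{v_1,v_2\}$ by Observation \ref{obs:x-bound-uncrossed}; in the cylindrical case it is any of the rim edges guaranteed to be uncrossed by Lemma \ref{lem:cylindrical-rim} (for $n=3$ the statement is anyway trivial, so we may assume that at least one of the two concentric circles carries enough vertices for a rim edge of the claimed type to exist); and in the third case $e$ is supplied by hypothesis. Assuming Conjecture \ref{conj:stronger}, apply it to the pair $v_a,v_b$ to obtain a crossing-free Hamiltonian path $\mathcal{P}$ with those end-vertices. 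Since $e$ is completely uncrossed, no edge of $\mathcal{P}$ can cross it, so $\mathcal{P}\cup\{e\}$ is a crossing-free Hamiltonian cycle; this is exactly the content of Observation \ref{obs:conjectures-uncrossed-edge}. Hence Conjecture \ref{conj:main} holds for $\mathcal{D}$.

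There is no real obstacle here beyond bookkeeping; the proof is essentially a one-line chaining of the three previous results. The only point that needs a moment of care is to confirm that the uncrossed edge is actually available in every relevant $n$ of the cylindrical case, which follows because Lemma \ref{lem:cylindrical-rim} leaves at most one crossed rim edge per circle while the total number of rim edges is at least two whenever $n\ge 4$, and $n=3$ is vacuous. All other work has been done in the observations and the lemma preceding the corollary, so no further computation is required.
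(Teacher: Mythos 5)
Your argument is correct and is exactly the paper's (implicit) reasoning: the corollary is stated without proof precisely because it is the direct chaining of \Cref{obs:conjectures-uncrossed-edge} with \Cref{obs:x-bound-uncrossed}, \Cref{lem:cylindrical-rim}, or the hypothesis of an uncrossed edge, just as you describe. One micro-remark on your cylindrical bookkeeping: for $n=4$ with two vertices on each circle, counting \enquote{at least two rim edges} does not by itself exclude that both (one per circle) are crossed, but the case analysis in the proof of \Cref{lem:cylindrical-rim} shows that a rim edge on a two-vertex circle guards only its own end-vertices and is therefore completely uncrossed, so the conclusion stands.
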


Unfortunately, not all simple drawings of $K_{n}$ have completely uncrossed edges. Harborth and Mengersen~\cite{hm-1974-ewcdcg} already observed that for every $n \geq 8$ there exist simple drawings of $K_{n}$ where every edge is crossed. Moreover, Kyn{\v{c}}l and Valtr~\cite{kv-2009-ecfoestcg} showed that there are simple drawings of $K_{n}$ where every edge is crossed by as many as $\Omega(n^{3/2})$ other edges. However, we can still prove a relation between \Cref{conj:main,conj:stronger} for those drawings, by considering all simple drawings of $K_{n}$ at once.

\begin{theorem}\label{thm:conj-all-hp-stronger-hc}
Let $n \geq 3$ be fixed. If \Cref{conj:stronger} is true for all simple drawings of~$K_{n+1}$, then \Cref{conj:main} is true for all simple drawings of $K_{n}$.
\end{theorem}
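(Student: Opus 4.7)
The plan is to derive \Cref{conj:main} for $K_n$ from \Cref{conj:stronger} for $K_{n+1}$ via a \emph{twin vertex} extension. Starting from an arbitrary simple drawing $\mathcal{D}$ of $K_n$, I will build a simple drawing $\mathcal{D}'$ of $K_{n+1}$ containing a new vertex $v_{n+1}$ that acts as a close copy of some vertex $v_a$ of $\mathcal{D}$, apply \Cref{conj:stronger} to $\mathcal{D}'$ for the pair $(v_a, v_{n+1})$, and then turn the resulting crossing-free Hamiltonian path into a crossing-free Hamiltonian cycle of $\mathcal{D}$ by identifying $v_{n+1}$ with $v_a$.

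Concretely, I fix any vertex $v_a$ of $\mathcal{D}$ and any face $F$ of $\mathcal{D}$ incident to $v_a$, and choose a disk $D$ around $v_a$ so small that it is entered only by edge stubs at $v_a$. I place $v_{n+1}$ inside $D$ in the sector of $F$ at $v_a$ and draw $\{v_a, v_{n+1}\}$ as a short arc entirely inside that sector, making it completely uncrossed in $\mathcal{D}'$. For every $i \neq a$, the edge $\{v_{n+1}, v_i\}$ is routed inside $D$ from $v_{n+1}$ to the exit point of $\{v_a, v_i\}$ on $\partial D$ (sweeping consistently on one side of the star at $v_a$) and continued outside $D$ as a tight parallel of $\{v_a, v_i\}$ until reaching $v_i$. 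A standard local check shows that $\mathcal{D}'$ is a simple drawing with the crucial \emph{twin property}: each new edge $\{v_{n+1}, v_i\}$ has the same crossings with every edge of $\mathcal{D}$ not incident to $v_a$ as the original edge $\{v_a, v_i\}$ has in $\mathcal{D}$.

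I then apply \Cref{conj:stronger} to $\mathcal{D}'$ with the pair $(v_a, v_{n+1})$ to obtain a crossing-free Hamiltonian path $\mathcal{P}' = v_a, w_1, w_2, \ldots, w_{n-1}, v_{n+1}$ in $\mathcal{D}'$, and form the Hamiltonian cycle $\mathcal{C} = v_a, w_1, w_2, \ldots, w_{n-1}, v_a$ of $\mathcal{D}$ by replacing the final edge $\{w_{n-1}, v_{n+1}\}$ of $\mathcal{P}'$ with the edge $\{w_{n-1}, v_a\}$ of $\mathcal{D}$. The first $n-1$ edges of $\mathcal{C}$ coincide with edges of $\mathcal{P}'$ and hence are pairwise non-crossing, so it only remains to consider pairs consisting of $\{w_{n-1}, v_a\}$ and some $\{w_i, w_{i+1}\}$ with $1 \leq i \leq n-3$ (the only edges of $\mathcal{C}$ sharing no endpoint with $\{w_{n-1}, v_a\}$). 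Each such $\{w_i, w_{i+1}\}$ is not incident to $v_a$, so by the twin property it crosses $\{w_{n-1}, v_a\}$ in $\mathcal{D}$ if and only if it crosses $\{w_{n-1}, v_{n+1}\}$ in $\mathcal{D}'$, which does not happen because $\mathcal{P}'$ is crossing-free.

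The hard part will be the careful verification that the twin extension exists as a simple drawing with the prescribed crossing profile. Combinatorially the argument is then short bookkeeping, but topologically the local redrawing inside $D$ must simultaneously preserve simplicity of $\mathcal{D}'$, leave every crossing of $\{v_{n+1}, v_i\}$ with an edge not at $v_a$ matching that of $\{v_a, v_i\}$, and keep $\{v_a, v_{n+1}\}$ completely uncrossed; this is the only step of the proof that relies on manipulating curves rather than on purely combinatorial crossing information.
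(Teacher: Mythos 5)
Your proposal is correct and follows essentially the same route as the paper: duplicate a vertex $v_a$ by placing a twin $v_{n+1}$ in an incident cell, route the new star edges as near-parallels of the old ones (sweeping around $v_a$), apply \Cref{conj:stronger} to the pair $(v_a,v_{n+1})$, and identify the twins to close the path into a crossing-free Hamiltonian cycle. The paper is slightly more explicit about the routing (going around $v_n$ in counter-clockwise order, which pins down the crossings among the new edges and the old star) and about why the path can use neither $\{v_a,v_{n+1}\}$ nor both twin edges at a common vertex, but your bookkeeping via $w_1,\ldots,w_{n-1}$ covers the same points.
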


\begin{proof}
Let $\mathcal{D}$ be an arbitrary simple drawing of $K_{n}$ for some fixed $n \geq 3$ and assume \Cref{conj:stronger} is true for all simple drawings of $K_{n+1}$. Consider the vertex $v_{n} \in \mathcal{D}$ and, especially, the star of $v_{n}$. Let, without loss of generality, the rotation of $v_{n}$ be $v_{1}, \ldots, v_{n-1}$ in clockwise order. We produce a simple drawing $\mathcal{D}'$ of $K_{n+1}$ by duplicating $v_{n}$ in the following way (see \Cref{fig:conjectures-reduction} for an illustration of the construction). We place a new vertex $v_{n+1}$ close to $v_{n}$ into the cell of $\mathcal{D}$ that is incident to $v_{n}$ and has the edges $\{ v_{n}, v_{n-1} \}$ and $\{ v_{n}, v_{1} \}$ on its boundary. We then add, for $1 \leq i \leq n-1$, the edges $\{ v_{n+1}, v_{i} \}$ by starting at $v_{n+1}$, going around $v_{n}$ in counter-clockwise direction until we reach the edge $\{ v_{n}, v_{i} \}$, and then following $\{ v_{n}, v_{i} \}$ until we reach $v_{i}$. That way, the edge $\{ v_{n+1}, v_{i} \}$ crosses exactly all edges $\{ v_{n}, v_{j} \}$, for $i < j < n$, and all edges that are crossed by $\{ v_{n}, v_{i} \}$ in $\mathcal{D}$. Finally, we connect $v_{n}$ and $v_{n+1}$ by a completely uncrossed edge.

By assumption, $\mathcal{D}'$ contains a crossing-free Hamiltonian path $\mathcal{P}$ with end-vertices $v_{n}$ and~$v_{n+1}$. Because $n+1 \geq 4$, the path $\mathcal{P}$ uses neither both of $\{ v_{i}, v_{n} \}$ and $\{ v_{i}, v_{n+1} \}$ for any vertex $v_{i}$ in $\mathcal{D}'$ with $1 \leq i \leq n-1$ nor the edge $\{ v_{n}, v_{n+1} \}$. Therefore, identifying $v_{n}$ and $v_{n+1}$ in $\mathcal{P}$ produces a Hamiltonian cycle $\mathcal{C}$ in $\mathcal{D}$. Since all edges $\{ v_{i}, v_{n+1} \}$ have the same crossings as $\{ v_{i}, v_{n} \}$, except for crossings with edges incident to $v_{n}$ and $v_{n+1}$, the cycle $\mathcal{C}$ is also crossing-free. And as $\mathcal{D}$ was arbitrary, this finishes the proof.
\end{proof}

\begin{figure}[h]
\centering
\subcaptionbox{\centering\label{fig:conjectures-reduction}}[.62\textwidth]{\includegraphics[page=7]{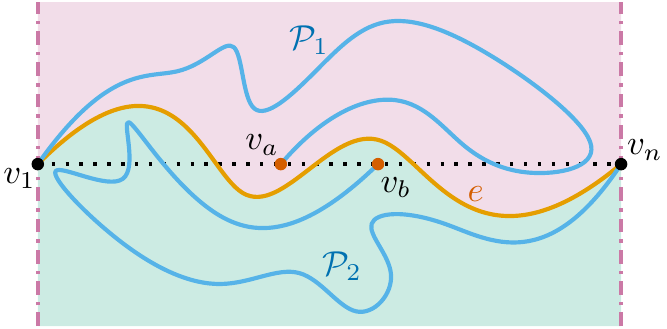}}
\subcaptionbox{\centering\label{fig:reduction-c-mon}}[.36\textwidth]{\includegraphics[page=8]{Figures/existence_all_pairs.pdf}}
\caption{\textbf{(a)}~The basic construction to add a vertex to a simple drawings of $K_{n}$. \textbf{(b)}~A possible different placement of $v_{n+1}$ together with a different routing of its incident edges.}
\label{fig:add-a-vertex}
\end{figure}

We remark that the described construction was also used by Harborth and Mengersen~\cite{hm-1992-dcgmnc} to inductively produce crossing maximal drawings of $K_{n}$. Further, if we want an analogous statement of \Cref{thm:conj-all-hp-stronger-hc} for only a sub-class of simple drawings, then we have to make sure that the drawing $\mathcal{D}'$ can be constructed in such a way that it also lies in that sub-class. For that, we actually have several possibilities to vary the construction without changing the essential crossing properties of it. First, the vertex $v_{n+1}$ can be placed in any of the $n-1$ cells incident to $v_{n}$. Second, there are $n$ different ways of how to split the edges incident to $v_{n+1}$ into two sets, regarding whether they go around $v_{n}$ in clockwise or counter-clockwise direction. See \Cref{fig:reduction-c-mon} for an example. In particular, for strongly c-monotone drawings, we can place $v_{n+1}$ directly next to $v_{n}$ in circular order around $O$ and draw all edges such that they are c-monotone with respect to $O$. It then follows that also $\mathcal{D}'$ is strongly c-monotone (by \Cref{lem:strong-c-mon-for-kn}, we only need to check each star in $\mathcal{D}'$).

\begin{corollary}\label{cor:strong-c-mon-conjectures}
If \Cref{conj:stronger} is true for all strongly c-monotone drawings, then also \Cref{conj:main} is true for all strongly c-monotone drawings.
\end{corollary}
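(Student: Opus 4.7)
The plan is to adapt the vertex-duplication construction from the proof of \Cref{thm:conj-all-hp-stronger-hc} in such a way that the augmented drawing $\mathcal{D}'$ of $K_{n+1}$ remains strongly c-monotone. Starting from a strongly c-monotone drawing $\mathcal{D}$ of $K_{n}$ with origin~$O$, I would place the vertices of $\mathcal{D}$ on a common circle around $O$ and insert the new vertex $v_{n+1}$ on that circle immediately next to $v_{n}$, at angular distance $\varepsilon$ from $v_{n}$, where $\varepsilon$ is chosen smaller than the angular gap from $v_{n}$ to any other vertex. The edge $\{v_{n},v_{n+1}\}$ is drawn as a tiny arc confined to a thin wedge containing no other vertex, which makes it completely uncrossed.

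Next, for each $1 \le i \le n-1$, I would draw $\{v_{n+1}, v_{i}\}$ as a slight perturbation of $\{v_{n}, v_{i}\}$, leaving $v_{n+1}$ on the side of $\{v_{n}, v_{n+1}\}$ opposite $v_{n}$ and thereby going around $v_{n}$ in counter-clockwise direction before joining $\{v_{n}, v_{i}\}$, exactly as in the base construction. Since $\{v_{n}, v_{i}\}$ is c-monotone with respect to $O$, a sufficiently small perturbation of it can also be drawn c-monotone, and the wedge $\Lambda_{\{v_{n+1}, v_{i}\}}$ then differs from $\Lambda_{\{v_{n}, v_{i}\}}$ only by the tiny angular interval between $v_{n}$ and $v_{n+1}$. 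This ensures that $\mathcal{D}'$ is a c-monotone drawing whose crossing pattern matches the requirements of \Cref{thm:conj-all-hp-stronger-hc}.

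The main obstacle is certifying that $\mathcal{D}'$ is in fact \emph{strongly} c-monotone. For this, I would invoke condition~(3) of \Cref{lem:strong-c-mon-for-kn} and check that no star in $\mathcal{D}'$ covers the plane. For a vertex $v_{i}$ with $i \le n-1$, its star in $\mathcal{D}'$ differs from its star in $\mathcal{D}$ only by the addition of $\{v_{i}, v_{n+1}\}$, whose wedge is a small deformation of $\Lambda_{\{v_{i}, v_{n}\}}$; so any ray from $O$ that avoids the star of $v_{i}$ in $\mathcal{D}$ still avoids the augmented star for $\varepsilon$ small enough. For the stars of $v_{n}$ and $v_{n+1}$ themselves, the union of their edge-wedges is essentially the wedge-union of the star of $v_{n}$ in $\mathcal{D}$, enlarged only by the narrow angular strip between the two vertices, so an uncovered ray in $\mathcal{D}$ again survives in $\mathcal{D}'$ after shrinking $\varepsilon$.

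Once $\mathcal{D}'$ is known to be strongly c-monotone, the assumed validity of \Cref{conj:stronger} on this class delivers a crossing-free Hamiltonian path in $\mathcal{D}'$ with end-vertices $v_{n}$ and $v_{n+1}$. Identifying $v_{n}$ with $v_{n+1}$ in this path, exactly as in the last paragraph of the proof of \Cref{thm:conj-all-hp-stronger-hc}, yields the desired crossing-free Hamiltonian cycle in $\mathcal{D}$ and completes the argument.
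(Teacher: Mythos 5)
Your proposal is correct and follows essentially the same route as the paper: the authors likewise place $v_{n+1}$ directly next to $v_{n}$ in the circular order around $O$, draw its edges c-monotone as perturbed copies of the star of $v_{n}$, and certify strong c-monotonicity of $\mathcal{D}'$ via the star condition of \Cref{lem:strong-c-mon-for-kn} before invoking \Cref{thm:conj-all-hp-stronger-hc}. Your write-up just spells out the $\varepsilon$-perturbation and star-by-star check that the paper leaves implicit.
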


We note that, for example, for the classes of g-twisted or g-convex drawings, it is not immediately clear whether an analogous statement holds.

\subsection{Proving the all paths conjecture for sub-classes of simple drawings}\label{sec:all-pair-proofs}

In this section we prove \Cref{conj:stronger} for strongly c-monotone drawings and for cylindrical drawings. As a basis for both cases, we first verify the conjecture for $x$-monotone drawings. We formulate the following definitions for $x$-bounded drawings because we will need them later in this more general setting. Given an edge $e = \{ v_{a}, v_{b} \}$ in an $x$-bounded drawing $\mathcal{D}$, we consider the area $F$ between and including the vertical lines through $v_{a}$ and $v_{b}$, respectively. The edge $e$ splits $F$ into two connected components, the closures thereof we call the two \emph{sides} of $e$. If a sub-drawing of $\mathcal{D}$, in particular an edge or a vertex, is completely contained in one side of $e$, we say that it lies \emph{above} or \emph{below} $e$. The following observation can be deduced from the definition of $x$-bounded drawings and the fact that two edges in a simple drawing have at most one point in common.

\begin{observation}\label{obs:x-bound-edge-above-below}
Let $e$ and $f$ be two edges of an $x$-bounded drawing $\mathcal{D}$ such that both end-vertices of $f$ lie in the same side of $e$. Then the whole edge $f$ lies in that side.
\end{observation}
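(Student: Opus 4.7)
The plan is to reduce the statement to a Jordan-arc separation argument inside the closed vertical strip $F$ between the vertical lines through $v_a$ and $v_b$, where $e = \{v_a, v_b\}$. Since both endpoints of $f$ lie in a side $S$ of $e$, and each side is by definition contained in $F$, the endpoints of $f$ lie in $F$; because $f$ is $x$-bounded, $f$ is contained in the strip between its own endpoints, which in turn is a subset of $F$. Hence the entire arc $f$ lies in $F$, and the question reduces to deciding which of the two closed sides of $e$ inside $F$ contains $f$.

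Next I would perform a short case analysis on the intersection $f \cap e$. By simplicity of $\mathcal{D}$, this intersection is either empty, a single common endpoint, or a single proper crossing. In the empty case, $f$ is a connected subset of $F \setminus e$ whose endpoints lie in the connected component of $F \setminus e$ whose closure is $S$, so $f \subseteq S$. In the proper-crossing case, $f$ would cross from one component of $F \setminus e$ into the other and thus have endpoints in opposite sides of $e$, contradicting the hypothesis; so this case cannot occur. In the shared-endpoint case (the shared vertex being $v_a$ or $v_b$), the arc $f$ minus that vertex is a connected subset of $F \setminus e$ whose other endpoint lies in $S$, and hence again lies in the component whose closure is $S$; reattaching the shared vertex, which belongs to $S$ because sides are defined as closures, gives $f \subseteq S$.

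The one technical point that needs care is the purely topological fact that $e$ really separates the closed strip $F$ into two closed sides with disjoint interiors (so that ``lies in the same side'' and ``lies in the same connected component of $F \setminus e$'' coincide for points not on $e$). This is a standard consequence of the Jordan curve theorem once one, say, one-point-compactifies the strip, together with the observation that by $x$-boundedness $f$ cannot escape $F$ across the left or right vertical bounding rays. Beyond these bookkeeping items the statement is an immediate consequence of simplicity, and I do not anticipate any deeper obstacle.
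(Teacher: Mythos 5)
Your argument is correct and follows exactly the route the paper intends: the paper leaves this as an observation justified in one sentence by ``the definition of $x$-bounded drawings and the fact that two edges in a simple drawing have at most one point in common,'' and your proof is precisely the fleshed-out version of that ($f$ stays in the strip $F$ by $x$-boundedness, then a case analysis on the at-most-one-point intersection $f \cap e$ rules out a side change). No substantive difference.
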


With this, we can inductively combine crossing-free paths on certain sub-drawings of $\mathcal{D}$ to a crossing-free Hamiltonian path in $\mathcal{D}$. For convenience we state the following proposition only for $x$-monotone drawings. An analogous statement for $x$-bounded drawings will later follow from \Cref{thm:x-bound-is-x-mon}.

\begin{proposition}\label{prop:all-hp-x-mon}
Let $\mathcal{D}$ be an $x$-monotone drawing of $K_{n}$ and let $v_{a}$ and $v_{b}$ be two vertices in $\mathcal{D}$. Then $\mathcal{D}$ contains a crossing-free Hamiltonian path with end-vertices $v_{a}$ and $v_{b}$.
\end{proposition}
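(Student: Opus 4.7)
My plan is to proceed by induction on $n$, with the base cases $n \leq 2$ immediate. For the inductive step, let $\mathcal{D}$ be an $x$-monotone drawing of $K_n$ with vertices $v_1, \ldots, v_n$ labeled in increasing $x$-order, and let $v_a, v_b$ be the target endpoints, where WLOG $a \leq b$. Recall from \Cref{obs:x-bound-uncrossed} that the edges $\{v_1, v_2\}$ and $\{v_{n-1}, v_n\}$ are completely uncrossed.

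I would first dispatch the subcase $a = 1$ and $b = n$: the path $v_1, v_2, \ldots, v_n$ works, since any two non-consecutive edges $\{v_i, v_{i+1}\}$ and $\{v_j, v_{j+1}\}$ (with $j \geq i + 2$) lie in the disjoint vertical strips $[x(v_i), x(v_{i+1})]$ and $[x(v_j), x(v_{j+1})]$ and thus cannot cross by $x$-monotonicity, while consecutive edges share a vertex. Next, if exactly one endpoint is extreme (WLOG $a = 1$ and $b < n$; the case $b = n$ is symmetric under horizontal reflection), I would apply the induction hypothesis to the $x$-monotone sub-drawing $\mathcal{D}[\{v_2, \ldots, v_n\}]$ with endpoints $v_2, v_b$ to obtain a crossing-free Hamiltonian path $\mathcal{P}'$, and then prepend $v_1$ via the uncrossed edge $\{v_1, v_2\}$. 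Since $\{v_1, v_2\}$ crosses no edge of $\mathcal{D}$, the resulting path is crossing-free.

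The main case is $2 \leq a < b \leq n - 1$. The plan is again to apply the induction hypothesis to $\mathcal{D}[\{v_2, \ldots, v_n\}]$ with endpoints $v_a, v_b$, obtaining a crossing-free Hamiltonian path $\mathcal{P}'$, and then to insert $v_1$ next to $v_2$ using the uncrossed edge $\{v_1, v_2\}$. When $v_2 \notin \{v_a, v_b\}$, the vertex $v_2$ appears as an interior vertex of $\mathcal{P}'$ with two neighbors $u$ and $w$; the insertion replaces $\{v_2, u\}$ (say) by the two-edge detour $v_2 - v_1 - u$. Since $\{v_1, v_2\}$ contributes no crossings, the construction succeeds provided $\{v_1, u\}$ does not cross any edge of $\mathcal{P}' \setminus \{v_2, u\}$. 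The edge cases $v_2 \in \{v_a, v_b\}$ reduce to the previous case by instead removing $v_2$ from the sub-drawing and prepending or appending $v_2$ using the same uncrossed edge.

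The main obstacle will be to show that at least one of the two choices of neighbor $u$ or $w$ yields a valid insertion. I expect this sub-claim to follow from a topological analysis of the star $S(v_1)$ together with $\mathcal{P}'$: because $v_1$ is the leftmost vertex, $S(v_1)$ is a plane fan emanating rightward, whose rotation at $v_1$ partitions the right half-plane into wedges bounded by consecutive star edges. Combining this wedge structure with \Cref{obs:x-bound-edge-above-below}, which controls on which side of an edge other edges may lie, I would argue that the potentially obstructing edges of $\mathcal{P}'$ cluster on one side of the local picture around $v_2$, leaving the opposite neighbor of $v_2$ free for a crossing-free insertion of $v_1$.
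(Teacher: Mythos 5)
Your induction \enquote{peel off $v_1$ and re-insert it next to $v_2$} is a genuinely different route from the paper's (which instead uses the edge $\{v_1,v_n\}$ as a separator and stitches together two or three inductively obtained paths lying above and below it, controlled by \Cref{obs:x-bound-edge-above-below}). However, your argument has a real gap exactly where you flag \enquote{the main obstacle}: the claim that for at least one of the two path-neighbours $u,w$ of $v_2$ in $\mathcal{P}'$, the edge $\{v_1,u\}$ (resp.\ $\{v_1,w\}$) crosses no edge of the modified path, is never proved, and the sketched justification does not reach it. The wedge structure of the star $S(v_1)$ together with \Cref{obs:x-bound-edge-above-below} only controls the \emph{local} picture around $v_1$ and $v_2$; but the new edge $\{v_1,u\}$ spans the entire vertical strip from $v_1$ to $u$ and can cross edges of $\mathcal{P}'$ that are arbitrarily far from $v_2$ along the path (any edge of $\mathcal{P}'$ that is nested in or linked with $\{v_1,u\}$ is a candidate). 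Nothing in your outline explains why such non-local obstructions cannot block \emph{both} choices $u$ and $w$ simultaneously. The only part of the claim that follows easily is the local one: in the $K_4$ induced by $v_1,v_2,u,w$ at most one of the pairs $\{v_1,u\},\{v_2,w\}$ and $\{v_1,w\},\{v_2,u\}$ can cross, so the two insertions cannot both fail \emph{locally} — but this says nothing about crossings with the rest of $\mathcal{P}'$. A further structural weakness is that the inductive hypothesis hands you an arbitrary crossing-free path $\mathcal{P}'$ over which you have no control, so you cannot arrange for $v_2$ to sit in a convenient position; you would need the insertion lemma to hold for every such $\mathcal{P}'$, which is precisely the hard statement.

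If you want to salvage this line, you would need to either prove the insertion lemma (I could not find a quick counterexample, but I also see no proof), or strengthen the inductive statement so that the returned path has additional properties near $v_2$. The paper avoids the issue entirely: by splitting the vertex set along the single edge $e=\{v_1,v_n\}$ into the vertices above and below $e$, \Cref{obs:x-bound-edge-above-below} guarantees that the inductively obtained sub-paths stay in their respective sides of $e$ and hence cannot interfere with one another, so no insertion step is ever needed.
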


\begin{proof}
We prove the statement by induction on $n$. For $n \leq 3$, it is obviously true. For $n > 3$, let the vertices $v_{1} , \ldots , v_{n}$, as usual, be labeled from left to right in horizontal direction.

Let first $v_{a}$ and $v_{b}$, with $a, b \not\in \{ 1, n \}$, lie in different sides of the edge $e = \{ v_{1}, v_{n} \}$, without loss of generality, $v_{a}$ above and $v_{b}$ below. See \Cref{fig:x-mon-all-pairs_a} for an illustration of the following. The sub-drawing induced by all vertices above $e$ excluding $v_{n}$ and the sub-drawing induced by all vertices below $e$ excluding $v_{1}$ are proper sub-drawings of $\mathcal{D}$ and clearly $x$-monotone. Hence, by the induction hypothesis, there exists a crossing-free path $\mathcal{P}_{1}$ with end-vertices $v_{a}$ and $v_{1}$, visiting all vertices above $e$ except $v_{n}$, and another crossing-free path $\mathcal{P}_{2}$ with end-vertices $v_{n}$ and $v_{b}$, visiting all vertices below $e$ except $v_{1}$. Further, by \Cref{obs:x-bound-edge-above-below}, $e$ separates $\mathcal{P}_{1}$ and $\mathcal{P}_{2}$. Therefore, combining $\mathcal{P}_{1}$ and $\mathcal{P}_{2}$ via $e$ creates a crossing-free Hamiltonian path with end-vertices $v_{a}$ and $v_{b}$.

Let next $v_{a}$ and $v_{b}$, with $1 < a < b$, lie in the same side of $e$, without loss of generality, above $e$. See \Cref{fig:x-mon-all-pairs_b} for an illustration. Then, by the induction hypothesis and similar to before, there exists a crossing-free path $\mathcal{P}_{1}$ with end-vertices $v_{a}$ and $v_{1}$, visiting all vertices above $e$ that lie to the left of $v_{b}$, another crossing-free path $\mathcal{P}_{2}$ with end-vertices $v_{1}$ and~$v_{n}$, visiting all vertices below $e$, and a third crossing-free path $\mathcal{P}_{3}$ with end-vertices $v_{n}$ and~$v_{b}$, visiting all vertices between $v_{n}$ and $v_{b}$ that lie above~$e$. The path $\mathcal{P}_{3}$ may just be a single vertex, which happens if $b = n$, and at most $\mathcal{P}_{2}$ can contain $e$, which happens if there are no vertices strictly below $e$. Further, by $x$-monotonicity, $\mathcal{P}_{1}$ cannot cross $\mathcal{P}_{3}$ and, by \Cref{obs:x-bound-edge-above-below}, $\mathcal{P}_{2}$ cannot cross any of the other two paths. Therefore, combining all three paths creates a crossing-free Hamiltonian path with end-vertices $v_{a}$ and~$v_{b}$. The case $a < b < n$ works similarly with the arguments being right-left reversed.

Finally, if $v_{a} = v_{1}$ and $v_{b} = v_{n}$, then the path in given order from left to right is a crossing-free Hamiltonian path with end-vertices $v_{a}$ and $v_{b}$.
\end{proof}

We next show that strongly c-monotone drawings behave locally very much like $x$-mono\-tone drawings. Recall the definition of the wedge $\Lambda_{e}$ of an edge $e$ from \Cref{sec:classes} and be aware that the following lemma does in general not hold for (non-strongly) c-monotone drawings.

\begin{figure}[t]
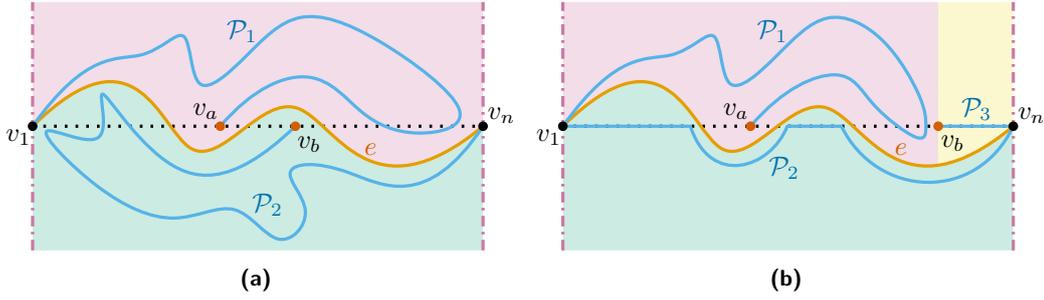

\centering
\subcaptionbox{\centering\label{fig:x-mon-all-pairs_a}}[.49\textwidth]{\includegraphics[page=1]{Figures/existence_all_pairs.pdf}}
\subcaptionbox{\centering\label{fig:x-mon-all-pairs_b}}[.49\textwidth]{\includegraphics[page=2]{Figures/existence_all_pairs.pdf}}
\caption{The two cases for finding a crossing-free Hamiltonian path between two given vertices $v_{a}$ and $v_{b}$ in an $x$-monotone drawing of $K_{n}$ when $v_{a}$ and $v_{b}$ lie \textbf{(a)}~in different sides or \textbf{(b)}~in the same side of the edge $e = \{ v_{1}, v_{n} \}$. Edges are drawn orange while longer paths are lightblue.}
\label{fig:x-mon-all-pairs}
\end{figure}

\begin{lemma}\label{lem:strong-c-mon-x-mon}
Let $e$ be an edge in a strongly c-monotone drawing $\mathcal{D}$. Then the sub-drawing $\mathcal{D}_{e}$ of $\mathcal{D}$, induced by all vertices in the wedge $\Lambda_{e}$ of $e$, is fully contained in $\Lambda_{e}$ and strongly isomorphic to an $x$-monotone drawing.
\end{lemma}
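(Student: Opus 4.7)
\medskip

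\textbf{Proof plan.} My plan is to handle the two assertions separately: first the containment $\mathcal{D}_e \subseteq \Lambda_e$, for which strong c-monotonicity is essential, and second the strong isomorphism to an $x$-monotone drawing, which is then a routine coordinate-change argument.

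For the containment, let $f = \{v_c, v_d\}$ be any edge of $\mathcal{D}_e$, so both $v_c$ and $v_d$ lie in $\Lambda_e$. Since $\mathcal{D}$ is c-monotone, no two vertices share a ray from~$O$, and the two rays from $O$ through $v_c$ and $v_d$ both lie inside $\Lambda_e$. These two rays split the plane into two closed wedges with apex $O$; one of them (the ``smaller'' one) is contained in $\Lambda_e$, and the other (the ``larger'' one) contains the closure of the complement of $\Lambda_e$. By definition, $\Lambda_f$ is whichever of these two wedges contains~$f$. In the first case we immediately get $f \subseteq \Lambda_f \subseteq \Lambda_e$, as desired. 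In the second case, $\Lambda_f \cup \Lambda_e$ equals the whole plane, so the pair of edges $e$ and $f$ covers the plane, contradicting \Cref{lem:strong-c-mon-for-kn}(1). Hence every edge of $\mathcal{D}_e$ stays inside $\Lambda_e$.

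For the second claim, I would unfold the wedge into a vertical strip. Place $O$ at the origin and suppose $\Lambda_e = \{(r\cos\theta, r\sin\theta) : r \geq 0,\ \alpha \leq \theta \leq \beta\}$. Apply the homeomorphism $\varphi \colon \Lambda_e \setminus \{O\} \to [\alpha,\beta] \times (0,\infty)$ given by $(r\cos\theta, r\sin\theta) \mapsto (\theta, r)$; this maps every ray from $O$ through $\Lambda_e$ to a vertical line, and can be extended to a homeomorphism of the plane (or the sphere) that sends $\Lambda_e$ to a vertical strip and $O$ to a point at infinity. Under $\varphi$, a curve inside $\Lambda_e$ meets a vertical line $\{x = \theta_0\}$ exactly as often as the corresponding original curve meets the ray from $O$ at angle $\theta_0$. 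Since every edge of $\mathcal{D}_e$ is c-monotone with respect to $O$ and, by the first part, stays inside $\Lambda_e$, each edge of $\varphi(\mathcal{D}_e)$ crosses every vertical line at most once. Moreover, c-monotonicity guarantees that no two vertices share a ray from $O$, so no two vertices of $\varphi(\mathcal{D}_e)$ share an $x$-coordinate. Thus $\varphi(\mathcal{D}_e)$ is an $x$-monotone drawing strongly isomorphic to $\mathcal{D}_e$.

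The main obstacle is really only the first step: making explicit that whenever both endpoints of $f$ lie in $\Lambda_e$, the wedge $\Lambda_f$ is forced to be either contained in $\Lambda_e$ or to cover the complement of $\Lambda_e$, and that the latter option is precisely what strong c-monotonicity forbids via \Cref{lem:strong-c-mon-for-kn}. Once the containment is established, the unfolding map is standard and the strong-isomorphism claim follows without further difficulty.
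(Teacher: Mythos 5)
Your proposal is correct and follows essentially the same route as the paper: reduce the containment claim to the observation that an edge of $\mathcal{D}_e$ escaping $\Lambda_e$ would, together with $e$, cover the plane (contradicting strong c-monotonicity), and then unfold the wedge into a vertical strip via a homeomorphism sending $O$ to infinity. The only cosmetic difference is that you invoke \Cref{lem:strong-c-mon-for-kn} (which is stated for $K_n$) where the definition of strongly c-monotone drawings already gives directly that no pair of edges covers the plane, so the citation is unnecessary.
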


\begin{proof}
Suppose that an edge $f$ with both end-vertices in $\Lambda_{e}$ leaves $\Lambda_{e}$ through one of its boundary rays. Then $f$ has to encircle $O$ and enter $\Lambda_{e}$ again through its other boundary ray. Consequently $e$ and $f$ cover the plane, a contradiction to strong c-monotonicity. Hence, $\mathcal{D}_{e}$~is fully contained in $\Lambda_{e}$. Furthermore, there exists a homeomorphism of the plane that maps $\mathcal{D}_{e}$ to an $x$-monotone drawing, for example, by first rotating the plane accordingly around $O$ and then moving $O$ to infinity through the complement of $\Lambda_{e}$.
\end{proof}

For a c-monotone drawing $\mathcal{D}$ with origin $O$ and two consecutive vertices in the circular order around $O$, we denote the closed wedge with apex $O$ that has the rays from $O$ through these two vertices as its boundary and does not contain any other vertex of $\mathcal{D}$ as a \emph{gap} of $\mathcal{D}$. Further, we call an edge between two consecutive vertices a \emph{gap edge} and associate it with the respective gap. Then the wedge $\Lambda_{e}$ of a gap edge $e$ is either identical to its gap or to the closure of the complement of its gap. Together with \Cref{lem:strong-c-mon-x-mon} this implies the following central property of strongly c-monotone drawings.

\begin{corollary}\label{cor:strong-c-mon-2-cases}
Let $\mathcal{D}$ be a strongly c-monotone drawing. Then either all gap edges are contained in their respective gaps or $\mathcal{D}$ is strongly isomorphic to an $x$-monotone drawing.
\end{corollary}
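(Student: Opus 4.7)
The plan is to exploit the dichotomy already noted just before the corollary: for any gap edge $e$, its wedge $\Lambda_e$ coincides with its gap or with the closure of the complement of its gap. I would split the argument along exactly this dichotomy: if for every gap edge the first alternative holds, we are in the first conclusion of the corollary and nothing more needs to be done. Otherwise, some gap edge $e$ has $\Lambda_e$ equal to the complement of its gap, and the plan is to derive the second conclusion from this single witness.

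So assume such an $e$ exists. The key observation is that the gap associated with $e$ contains no vertex of $\mathcal{D}$ in its interior by definition, and its two bounding rays from $O$ each pass through exactly one vertex of $\mathcal{D}$, namely the two end-vertices of $e$. Consequently every vertex of $\mathcal{D}$ lies in $\Lambda_e$ (the endpoints on the bounding rays, all others in the interior of $\Lambda_e$ by c-monotonicity, which forbids two vertices on a common ray from $O$). Hence the induced sub-drawing $\mathcal{D}_e$ from \Cref{lem:strong-c-mon-x-mon} is all of $\mathcal{D}$.

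Now I apply \Cref{lem:strong-c-mon-x-mon} to $e$: $\mathcal{D}_e$ is fully contained in $\Lambda_e$ and strongly isomorphic to an $x$-monotone drawing. Since $\mathcal{D}_e = \mathcal{D}$, this yields precisely the second alternative of the corollary, completing the case analysis.

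There is essentially no obstacle here; the only subtlety worth spelling out is why both endpoints of $e$, which lie on the boundary rays of the gap rather than in its interior, are correctly counted as vertices of $\mathcal{D}_e$. This follows immediately because $\Lambda_e$ is closed and contains $e$ by the definition of the wedge in \Cref{sec:classes}, so its end-vertices belong to $\Lambda_e$ and therefore to $\mathcal{D}_e$. The remainder is a direct invocation of the preceding lemma and the dichotomy, so the proof should be very short.
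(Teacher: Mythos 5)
Your proposal is correct and follows essentially the same route as the paper: assume some gap edge $e$ is not contained in its gap, conclude that $\Lambda_{e}$ is the closure of the complement of the gap and hence contains all vertices, and then apply \Cref{lem:strong-c-mon-x-mon}. The extra details you spell out (why the end-vertices of $e$ and all other vertices lie in $\Lambda_{e}$) are fine but not needed beyond what the paper already states.
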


\begin{proof}
Assume there exists a gap edge $e$ that is not contained in its gap. Then the wedge $\Lambda_{e}$ of $e$ is the closure of the complement of its gap and hence contains all vertices of $\mathcal{D}$. Consequently, by \Cref{lem:strong-c-mon-x-mon}, $\mathcal{D}$ is strongly isomorphic to an $x$-monotone drawing.
\end{proof}

\Cref{cor:strong-c-mon-2-cases} implies that for every strongly c-monotone drawing that is not strongly isomorphic to an $x$-monotone drawing, its gap edges form a crossing-free Hamiltonian cycle.

\begin{theorem}\label{thm:all-hp-strong-c-mon}
Let $\mathcal{D}$ be a strongly c-monotone drawing of $K_{n}$ and let $v_{a}$ and $v_{b}$ be two vertices in~$\mathcal{D}$. Then $\mathcal{D}$ contains a crossing-free Hamiltonian path with end-vertices $v_{a}$ and $v_{b}$.
\end{theorem}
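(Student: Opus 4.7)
The plan is to argue by strong induction on $n$, combined with the dichotomy provided by Corollary~\ref{cor:strong-c-mon-2-cases}. The base cases $n\le 2$ are vacuous, so suppose $n\ge 3$. If $\mathcal{D}$ is strongly isomorphic to an $x$-monotone drawing, Proposition~\ref{prop:all-hp-x-mon} yields the desired crossing-free Hamiltonian path directly. Otherwise every gap edge of $\mathcal{D}$ lies in its respective gap, and in particular the gap edges form a crossing-free Hamiltonian cycle $\mathcal{C}_{g}$ of $\mathcal{D}$.

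Consider the edge $g=\{v_{a},v_{b}\}$ together with its wedge $\Lambda_{g}$, set $V_{1}=V(\mathcal{D})\cap\Lambda_{g}$ and $V_{2}=V(\mathcal{D})\setminus V_{1}$, and label the vertices in circular order around $O$ as $w_{1},\dots,w_{n}$ with $v_{a}=w_{1}$ and $v_{b}=w_{j}$, so that $V_{1}=\{w_{1},\dots,w_{j}\}$. If $V_{2}=\emptyset$, Lemma~\ref{lem:strong-c-mon-x-mon} forces $\mathcal{D}$ to be $x$-monotone and Proposition~\ref{prop:all-hp-x-mon} applies; if $V_{1}=\{v_{a},v_{b}\}$, then $v_{a}$ and $v_{b}$ are consecutive around $O$ and $g$ is a gap edge of $\mathcal{C}_{g}$, so $\mathcal{C}_{g}\setminus g$ already is the desired path.

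The main subcase is $|V_{1}|\ge 3$ and $V_{2}\ne\emptyset$. Here Lemma~\ref{lem:strong-c-mon-x-mon} makes $\mathcal{D}[V_{1}]$ an $x$-monotone drawing with extreme vertices $v_{a},v_{b}$, so Proposition~\ref{prop:all-hp-x-mon} (in its extreme-endpoints case) yields the gap path $\mathcal{P}_{1}=v_{a}\to w_{2}\to\cdots\to w_{j-1}\to v_{b}$ as a crossing-free Hamiltonian path in $\mathcal{D}[V_{1}]$ whose edges all lie in $\Lambda_{g}$. Applying the induction hypothesis to the strictly smaller strongly c-monotone sub-drawing $\mathcal{D}[V_{2}\cup\{v_{a},v_{b}\}]$, and observing that the wedge $\Lambda_{g}$ contains only $v_{a},v_{b}$ as vertices of this sub-drawing---so that the recursive call lands in the analogous ``consecutive'' subcase---produces the gap path $\mathcal{P}_{2}=v_{a}\to w_{n}\to\cdots\to w_{j+1}\to v_{b}$ whose edges all lie in the complementary wedge $\Lambda_{g}'$. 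Since $\mathcal{P}_{1}\subset\Lambda_{g}$ and $\mathcal{P}_{2}\subset\Lambda_{g}'$ meet only at $v_{a}$ and $v_{b}$, their union $\mathcal{P}_{1}\cup\mathcal{P}_{2}$ is a crossing-free Hamiltonian \emph{cycle} of $\mathcal{D}$.

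To convert this cycle into a Hamiltonian path with endpoints $v_{a},v_{b}$ I perform a chord exchange: delete the two gap edges $\{v_{a},w_{2}\}$ and $\{w_{j+1},v_{b}\}$ and insert the diagonal $\{w_{2},w_{j+1}\}$, obtaining the candidate path $v_{a}\to w_{n}\to\cdots\to w_{j+1}\to w_{2}\to w_{3}\to\cdots\to w_{j-1}\to v_{b}$ that uses all the remaining gap edges
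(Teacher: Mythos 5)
Your first two subcases (the $x$-monotone branch and the case where $v_a,v_b$ are consecutive around $O$) match the paper. The problem is the main subcase. Note first that your $\mathcal{P}_1$ and $\mathcal{P}_2$ both consist entirely of gap edges, so their union is nothing more than the gap cycle $\mathcal{C}_g$, which \Cref{cor:strong-c-mon-2-cases} already gives you for free; the entire difficulty of the theorem --- joining two vertices that are \emph{not} adjacent on that cycle --- is deferred to your final ``chord exchange.'' And that step has a genuine gap: the inserted edge $\{w_2,w_{j+1}\}$ is not a gap edge, and nothing in strong c-monotonicity prevents it from crossing the gap edges you retain. Its wedge $\Lambda_{\{w_2,w_{j+1}\}}$ spans many gaps, and inside each such gap the chord may enter and exit on opposite sides of the corresponding gap edge (equivalently, in the $x$-monotone picture of \Cref{lem:strong-c-mon-x-mon} the chord and a gap edge are nested, and nested edges can cross, as in $\mathcal{T}_n$). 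So the candidate path need not be crossing-free, and the argument --- which in any case breaks off mid-sentence before this is addressed --- does not go through.

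The paper's proof circumvents exactly this obstacle by a different choice of wedge: it takes the edge $e=\{v_a',v_b'\}$ between the circular \emph{predecessors} of $v_a$ and $v_b$, so that the wedge $\Lambda_e$ contains $v_a$ and has $v_a'$ as an extreme vertex. Inside $\Lambda_e$ one may use arbitrary edges, because \Cref{lem:strong-c-mon-x-mon} plus \Cref{prop:all-hp-x-mon} (with the two \emph{prescribed} endpoints $v_a$ and $v_a'$, not just the extreme-endpoints case) produce a crossing-free Hamiltonian path of $\mathcal{D}_e$ confined to $\Lambda_e$; outside $\Lambda_e$ only gap edges are used, and these cannot interact with anything inside $\Lambda_e$. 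If you want to salvage your approach, you would need to replace the chord $\{w_2,w_{j+1}\}$ by a construction of this type rather than a single uncontrolled edge.
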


\begin{proof}
If $\mathcal{D}$ is strongly isomorphic to an $x$-monotone drawing, then the statement follows from \Cref{prop:all-hp-x-mon}. So we can assume, by \Cref{cor:strong-c-mon-2-cases}, that all gap edges are contained in their respective gaps.

We distinguish two cases. If $v_{a}$ and $v_{b}$ are neighbors in the circular order around $O$, then the Hamiltonian path between them that consists only of gap edges is crossing-free. Otherwise, let $v_{a}'$ and $v_{b}'$ be the vertices in clockwise circular order around $O$ directly before $v_{a}$ and~$v_{b}$, respectively; see \Cref{fig:c-mon-all-pairs_a} for an illustration. Let further, without loss of generality, the wedge $\Lambda_{e}$ of the edge $e = \{ v_{a}', v_{b}' \}$ contain $v_{a}$. Then, by \Cref{lem:strong-c-mon-x-mon}, the sub-drawing $\mathcal{D}_{e}$ induced by the vertices in $\Lambda_{e}$ is strongly isomorphic to an $x$-monotone drawing. Hence, by \Cref{prop:all-hp-x-mon}, there exists a crossing-free Hamiltonian path in $\mathcal{D}_{e}$ with end-vertices $v_{a}$ and $v_{a}'$. Finally, extending that path from $v_{a}'$ to $v_{b}$ by gap edges in counter-clockwise order around $O$ creates a Hamiltonian path with end-vertices $v_{a}$ and $v_{b}$ in~$\mathcal{D}$ which, by \Cref{lem:strong-c-mon-x-mon} and the above assumption, is crossing-free.
\end{proof}

\begin{figure}[h]
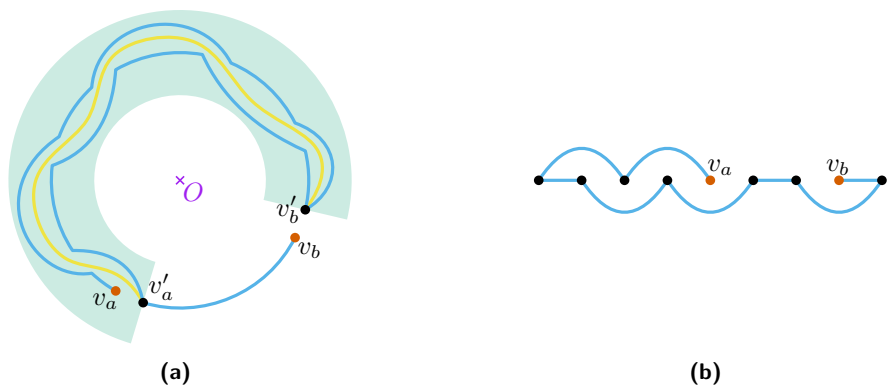

\centering
\subcaptionbox{\centering\label{fig:c-mon-all-pairs_a}}[.49\textwidth]{\includegraphics[page=3]{Figures/existence_all_pairs.pdf}}
\subcaptionbox{\centering\label{fig:twisted-all-pairs}}[.49\textwidth]{\includegraphics[page=4]{Figures/existence_all_pairs.pdf}}
\caption{\textbf{(a)}~Construction of a crossing-free Hamiltonian path between two given vertices in a strongly c-monotone drawing of $K_{n}$. \textbf{(b)}~A crossing-free Hamiltonian path between two given vertices of a twisted drawing. For better readability, the edges are not drawn in the usual way but instead indicated figuratively.}
\label{fig:c-mon-all-pairs}
\end{figure}

By \Cref{cor:strong-c-mon-conjectures}, this also proves \Cref{conj:main} for strongly c-monotone drawings.

\begin{corollary}\label{cor:cfhc-strong-c-mon}
Let $\mathcal{D}$ be a strongly c-monotone drawing of $K_{n}$ with $n \geq 3$. Then $\mathcal{D}$ contains a crossing-free Hamiltonian cycle.
\end{corollary}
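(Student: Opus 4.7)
The plan is to obtain this corollary by simply chaining two results already established in the paper. \Cref{thm:all-hp-strong-c-mon} verifies \Cref{conj:stronger} for strongly c-monotone drawings, i.e., it produces a crossing-free Hamiltonian path between every pair of vertices. \Cref{cor:strong-c-mon-conjectures} converts exactly such an ``all pairs'' statement into \Cref{conj:main} for strongly c-monotone drawings, via the vertex-duplication construction from \Cref{thm:conj-all-hp-stronger-hc}. So my proof would just apply \Cref{thm:all-hp-strong-c-mon} to the extended drawing $\mathcal{D}'$ produced by duplicating a vertex in a c-monotone-preserving way, obtain the crossing-free Hamiltonian path between $v_n$ and $v_{n+1}$, and identify these two vertices to close the path into a crossing-free Hamiltonian cycle in $\mathcal{D}$.

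Alternatively, a direct argument is available by case analysis guided by \Cref{cor:strong-c-mon-2-cases}. In the first case, every gap edge lies inside its own gap; since the $n$ gaps are closed wedges with common apex $O$ that have pairwise disjoint interiors and meet only along rays through shared endpoints, the $n$ gap edges together already form a simple Hamiltonian cycle, and it is crossing-free by construction. In the second case, $\mathcal{D}$ is strongly isomorphic to an $x$-monotone drawing; then \Cref{prop:all-hp-x-mon} provides a crossing-free Hamiltonian path between the two leftmost vertices $v_{1}$ and $v_{2}$, and \Cref{obs:x-bound-uncrossed} tells us that $\{v_{1},v_{2}\}$ is completely uncrossed, so \Cref{obs:conjectures-uncrossed-edge} lets us close the path into a crossing-free Hamiltonian cycle.

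Since everything is immediate from lemmas already proved, no real obstacle is expected. The only mildly subtle point in the direct route is ensuring that the union of gap edges is genuinely a simple cycle rather than an arrangement with spurious crossings, but this is guaranteed by the wedge definition of a gap. I would most likely present the one-line citation proof in the paper and leave the direct argument as a remark.
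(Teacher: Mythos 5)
Your primary route is exactly the paper's proof: \Cref{cor:cfhc-strong-c-mon} is obtained there precisely by combining \Cref{thm:all-hp-strong-c-mon} with \Cref{cor:strong-c-mon-conjectures}, so the citation-chaining argument is correct and identical in approach. Your alternative direct argument is also sound, and its first case is in fact already noted in the paper in the remark following \Cref{cor:strong-c-mon-2-cases} (the gap edges form a crossing-free Hamiltonian cycle when each lies in its own gap), while the second case correctly closes an $x$-monotone path via \Cref{obs:x-bound-uncrossed,obs:conjectures-uncrossed-edge}.
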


Let us mention at this point that \Cref{conj:main,conj:stronger} also hold for the twisted drawing~$\mathcal{T}_{n}$. For constructing a crossing-free Hamiltonian path or cycle, we just have to make sure not to use any pair of nested edges. \Cref{fig:twisted-all-pairs} indicates a possible crossing-free Hamiltonian path between two given vertices in a twisted drawing, using only edges between vertices that are at most at distance two from each other in the defining vertex order.

In \Cref{fig:c-mon-example} we give an example of a strongly c-monotone drawing that is not weakly isomorphic to any $x$-monotone or cylindrical drawing, by \Cref{obs:x-bound-uncrossed,lem:cylindrical-rim}, because it does not have any completely uncrossed edges. It is also not g-convex since it contains triangles without a convex side. In particular, this means that \Cref{cor:cfhc-strong-c-mon} verifies \Cref{conj:main} for more simple drawings than the ones for which it was known before.

\begin{figure}[t]
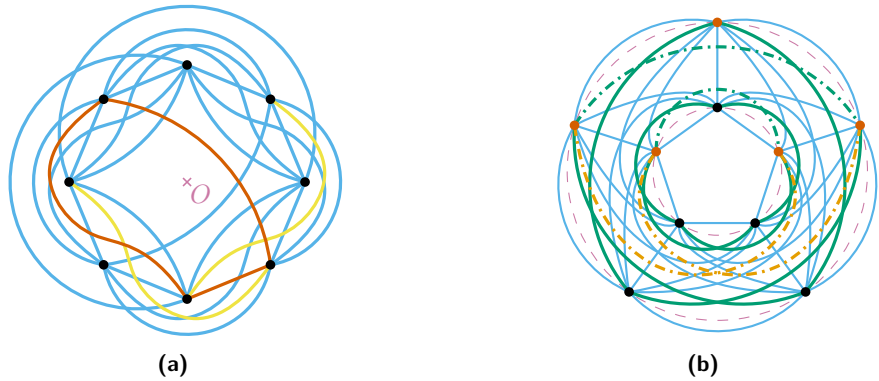

\centering
\subcaptionbox{\centering\label{fig:c-mon-example}}[.49\textwidth]{\includegraphics[page=4]{Figures/special_c_mon.pdf}}
\subcaptionbox{\centering\label{fig:cylindrical-example}}[.49\textwidth]{\includegraphics[page=5]{Figures/special_c_mon.pdf}}
\caption{\textbf{(a)}~A strongly c-monotone drawing of $K_{8}$ that is neither $x$-monotone nor cylindrical nor g-convex. The darkorange triangle has no convex side, which is witnessed by the two yellow edges. \textbf{(b)}~A cylindrical drawing $\mathcal{D}_{10}$ of $K_{10}$ that is neither strongly c-monotone nor g-convex.}
\label{fig:c-mon-cylindrical-examples}
\vspace{-0.5\baselineskip}
\end{figure}

Furthermore, \Cref{fig:cylindrical-example} shows an example of a cylindrical drawing $\mathcal{D}_{10}$ that is neither \mbox{g-}convex nor weakly isomorphic to any strongly c-monotone drawing. For the not g-convex part, for example, the sub-drawing induced by the five darkorange vertices is weakly isomorphic to the non-straight-line drawing of $K_{5}$ in \Cref{fig:T3}. To show that $\mathcal{D}_{10}$ is not weakly isomorphic to any strongly c-monotone drawing, we need more involved arguments. We give the basic ideas in the following.

First, imagine a c-monotone drawing being mapped onto the sphere such that the origin $O$ and the point at infinity, which we denote by $P_{\infty}$, are opposite poles. Then $O$ and $P_{\infty}$ behave exactly the same with regard to the definition of c-monotonicity and, thus, can be interchanged. Next, we consider sub-structures of simple drawings that restrict where $O$ and $P_{\infty}$ must be placed such that a drawing $\mathcal{D}$ can be c-monotone. The first structure, which we call a \emph{cloud}\footnote{Arroyo, Bensmail, and Richter~\cite{abr-2021-edgap} used similar structures to characterize pseudolinear drawings.}, is a simple closed curve $\mathcal{B}$ consisting of parts of the relative interior of edges in $\mathcal{D}$ such that the end-vertices of all involved edges lie on the same side of $\mathcal{B}$, which we call the \emph{inside} of the cloud, and no end-vertex lies on $\mathcal{B}$. Thereby the sides of $\mathcal{B}$ are defined analogously to the sides of a triangle. \Cref{fig:cloud} shows a cloud that also appears in~$\mathcal{D}_{10}$. The second structure, which we call a \emph{fish}, is defined the same way as a cloud but with one vertex of $\mathcal{D}$ lying on the simple closed curve $\mathcal{B}$. The edges forming the cloud in \Cref{fig:cloud} in fact also form five fishes; one of them is shown in \Cref{fig:fish}. The intersection of the insides of these five fishes, which we call the \emph{center cell} of the cloud, is shaded seagreen in \Cref{fig:cloud}. For a fish or cloud $\mathcal{B}$ in a c-monotone drawing, either $O$ or $P_{\infty}$ must lie in its inside because $\mathcal{B}$ contains at most one vertex, where it could change its circular direction around $O$ without forcing an edge to violate c-monotonicity. If both $O$ and $P_{\infty}$ would lie in the outside, then $\mathcal{B}$ would have to change its direction at least twice. Now, observe that $\mathcal{D}_{10}$ actually contains two such clouds, which are formed by the seagreen edges in \Cref{fig:cylindrical-example} and have disjoint insides. Hence, $O$ must lie in the center cell of one of the two clouds and $P_{\infty}$ must lie in the other. Finally, we consider \emph{pretzels}, which are cycles of length $4$ that encircle some area in the plane twice; shaded orange in \Cref{fig:pretzel}. If, in a c-monotone drawing $\mathcal{D}$, $O$ lies in that area, then some pair of edges of the pretzel will cover the plane and consequently $\mathcal{D}$ is not strongly c-monotone. By the above arguments and a result from Gioan~\cite[Theorem 3.10]{g-2022-cgdtm} stating that weakly isomorphic drawings of $K_{n}$ can be transformed into each other by triangle flips, this is the case for every c-monotone drawing that is weakly isomorphic to $\mathcal{D}_{10}$; the pretzel in \Cref{fig:cylindrical-example} is drawn dash dotted.

\begin{figure}[h]
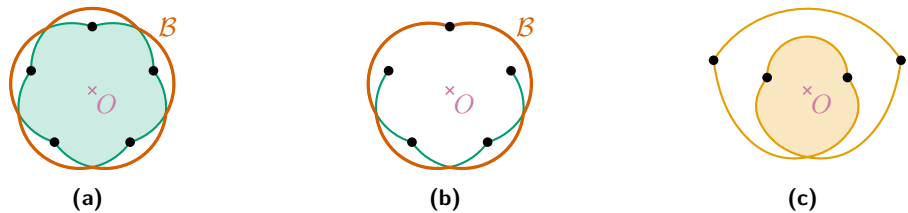

\centering
\subcaptionbox{\centering\label{fig:cloud}}[.328\textwidth]{\includegraphics[page=6]{Figures/special_c_mon.pdf}}
\subcaptionbox{\centering\label{fig:fish}}[.328\textwidth]{\includegraphics[page=7]{Figures/special_c_mon.pdf}}
\subcaptionbox{\centering\label{fig:pretzel}}[.328\textwidth]{\includegraphics[page=8]{Figures/special_c_mon.pdf}}
\caption{The three structures we use to show that $\mathcal{D}_{10}$ from \Cref{fig:cylindrical-example} is not weakly isomorphic to any strongly c-monotone drawing: \textbf{(a)}~A cloud, \textbf{(b)}~a fish, and \textbf{(c)}~a pretzel.}
\label{fig:cloud-fish-pretzel}
\vspace{-\baselineskip}
\end{figure}

Since we now know that not all cylindrical drawings are weakly isomorphic to a strongly c-monotone drawing, we conclude this section by verifying \Cref{conj:stronger} for cylindrical drawings. In our proof, we use a similar idea as in a \enquote{classroom proof} of \Cref{conj:main} for \emph{strongly} cylindrical drawings.

\begin{theorem}\label{thm:all-hp-cylindrical}
Let $\mathcal{D}$ be a cylindrical drawing of $K_{n}$ and let $v_{a}$ and $v_{b}$ be two vertices in~$\mathcal{D}$. Then $\mathcal{D}$ contains a crossing-free Hamiltonian path with end-vertices $v_{a}$ and $v_{b}$.\vspace{-1pt}
\end{theorem}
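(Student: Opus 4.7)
The plan is to prove the theorem by induction on $n$. The base case $n \leq 2$ is immediate, since the single edge $\{v_a,v_b\}$ itself is a crossing-free Hamiltonian path. For the inductive step, the principal tool is \Cref{lem:cylindrical-rim}: at most one rim edge per circle is crossed, so every vertex lying on a circle with at least three vertices has at least one completely uncrossed rim neighbor. In the main case I would assume, without loss of generality, that $v_a$ has a completely uncrossed rim neighbor $v'$ with $v' \neq v_b$. The drawing $\mathcal{D} \setminus \{v_a\}$ is then again a cylindrical drawing of $K_{n-1}$, so by the inductive hypothesis it admits a crossing-free Hamiltonian path $\mathcal{P}'$ from $v'$ to $v_b$. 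Prepending the completely uncrossed edge $\{v_a,v'\}$ to $\mathcal{P}'$ produces the desired crossing-free Hamiltonian path in $\mathcal{D}$; the situation where $v_b$ instead has a completely uncrossed rim neighbor $v'' \neq v_a$ is handled symmetrically.

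A short case analysis based on \Cref{lem:cylindrical-rim} and the distribution of $v_a$ and $v_b$ on the two circles shows that this principal case applies except in a handful of degenerate configurations: (i)~$v_a$ or $v_b$ is the only vertex on its circle, (ii)~$v_a$ and $v_b$ together are the only two vertices on a common circle, or (iii)~one of $v_a,v_b$ sits on a two-vertex circle whose unique rim edge is crossed while the other lies on the opposite circle. For each such configuration I would build the Hamiltonian path directly. Since the populated circle carries $n-1$ or $n-2$ vertices and its induced sub-drawing is, by \Cref{obs:cylindrical-2-page}, weakly isomorphic to a $2$-page-book drawing and hence to an $x$-monotone drawing, \Cref{prop:all-hp-x-mon} supplies a crossing-free Hamiltonian path on that circle between any prescribed pair of endpoints. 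We then attach $v_a$ (and, in configurations (ii) and (iii), also $v_b$) via one or two lateral edges.

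The main obstacle will be exactly this attaching step in the degenerate configurations: the lateral edges used to connect $v_a$ and $v_b$ to the populated circle must be mutually non-crossing and must not cross any edge of the Hamiltonian path chosen on that circle. The plan to overcome this is to combine three structural facts. First, lateral edges incident to a common vertex are themselves incident and therefore cannot cross. Second, \Cref{lem:cylindrical-rim} allows us to choose the path on the populated circle to consist almost entirely of completely uncrossed rim edges, leaving at most a single non-rim segment to worry about. Third, \Cref{prop:all-hp-x-mon} gives us the freedom to prescribe the endpoints of the path on the populated circle, so that we can dock the lateral edges at vertices where they are compatible with the rest of the construction. Verifying that these three facts together suffice to make a compatible choice in each of the finitely many degenerate configurations is the combinatorial heart of the proof.
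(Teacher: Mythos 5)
Your approach is correct but genuinely different from the paper's. The paper gives a direct construction: it splits into cases according to whether $v_{a}$ and $v_{b}$ lie on the same or different circles, builds paths of rim edges on each circle with an explicit detour edge $e_{1}$ around a crossed rim edge, and then argues geometrically (via how $f_{1}$ separates the lateral face) that the connecting lateral edges cannot be crossed. Your induction along a completely uncrossed rim edge incident to $v_{a}$ (or $v_{b}$) collapses almost all of that case analysis: since the prepended edge is completely uncrossed in $\mathcal{D}$, the combined path is immediately crossing-free, and $\mathcal{D}\setminus\{v_{a}\}$ is again cylindrical, so the step is sound. Two remarks on your degenerate configurations. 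First, only configuration~(ii) is genuinely degenerate: in~(i), the other endpoint automatically has an uncrossed rim neighbor distinct from the isolated vertex (so the main case applies from its side), and~(iii) is vacuous, because by \Cref{obs:cylindrical-crossings} a rim edge on a two-vertex circle guards only its own end-vertices and hence is never crossed. Second, in configuration~(ii) you should not route through \Cref{prop:all-hp-x-mon} at all --- a path obtained that way is crossing-free only within the circle's induced sub-drawing and may still be crossed by lateral edges. Instead, take the path formed by all completely uncrossed rim edges of the populated circle (by \Cref{lem:cylindrical-rim} this is a Hamiltonian path on that circle, with endpoints equal to the endpoints of the crossed rim edge if one exists), and attach $v_{a}$ and $v_{b}$ to its two ends by whichever of the two possible lateral-edge assignments is non-crossing; one of the two works because any $4$-tuple of vertices induces at most one crossing. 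This is exactly the trick the paper uses to join paths in its same-circle case, and with it your degenerate case closes; what your route buys is a shorter proof with no detour edges and no separation argument in the lateral face, at the cost of having to check that induced sub-drawings (possibly with one empty circle) remain in the class.
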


\begin{proof}
By \Cref{lem:cylindrical-rim}, all but at most one rim edges per circle are completely uncrossed. Moreover, if a rim edge $f$ has crossings, then $f$ lies in the lateral face and guards all vertices on its circle. We now distinguish two situations.

Let first $v_{a}$ and $v_{b}$ lie on different circles of $\mathcal{D}$. See \Cref{fig:cylindrical-all-pairs_a} for an illustration of the following. We construct a crossing-free path $\mathcal{P}_{1}$ by starting at $v_{a}$ and visiting all vertices on the same circle in clockwise order, if possible. If no rim edge is crossed then going along them yields the desired result. If at some point we reach a rim edge $f_{1}$ with crossings, then, instead of~$f_{1}$, we take the edge~$e_{1}$ to the first vertex before $v_{a}$ and continue in counter-clockwise order for the rest of the circle. In the same manner, we construct a crossing-free path $\mathcal{P}_{2}$ starting at $v_{b}$ and visiting all vertices on the second circle, potentially using a circle edge $e_{2}$ instead of a rim edge~$f_{2}$ with crossings. Finally, we connect the end-vertices of $\mathcal{P}_{1}$ and $\mathcal{P}_{2}$ that are different from $v_a$ and $v_b$ (unless the respective path has only one vertex) by a lateral edge~$e$. This produces a Hamiltonian path $\mathcal{P}$ in $\mathcal{D}$ with end-vertices $v_{a}$ and $v_{b}$. Further, $\mathcal{P}$ is crossing-free because $e$, $e_{1}$, and $e_{2}$ are the only edges in $\mathcal{P}$ that could have crossings. However, if $e_{1}$ and/or $e_{2}$ are present, then $f_{1}$ and/or $f_{2}$ partition the lateral face into up to three areas such that $e$ is contained in one area $A$ of them, and $e_{1}$ and $e_{2}$ are contained in two different connected components of the complement of $A$.

\begin{figure}[h]
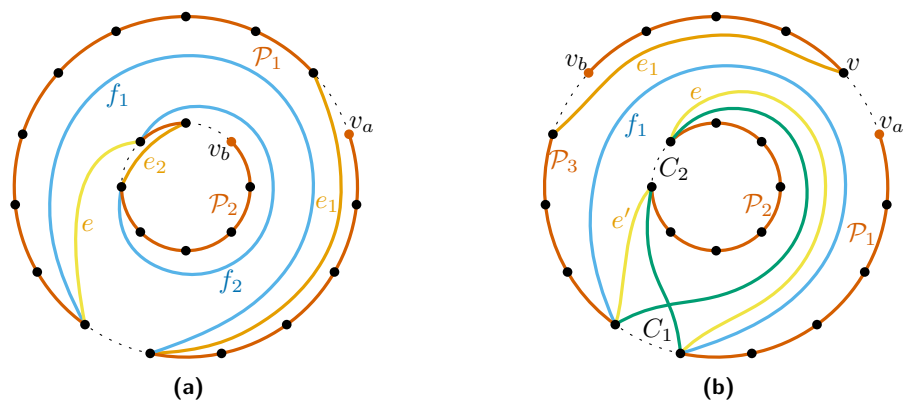

\centering\vspace{-1pt}
\subcaptionbox{\centering\label{fig:cylindrical-all-pairs_a}}[.49\textwidth]{\includegraphics[page=5]{Figures/existence_all_pairs.pdf}}
\subcaptionbox{\centering\label{fig:cylindrical-all-pairs_b}}[.49\textwidth]{\includegraphics[page=6]{Figures/existence_all_pairs.pdf}}\vspace{-3pt}
\caption{Construction of a crossing-free Hamiltonian path between two given vertices $v_{a}$ and $v_{b}$ in a cylindrical drawing when $v_{a}$ and $v_{b}$ lie on \textbf{(a)}~different circles or \textbf{(b)}~the same circle. In both situations the case with a crossed rim edge is shown.}
\label{fig:cylindrical-all-pairs}
\vspace{-2pt}
\end{figure}

Let next $v_{a}$ and $v_{b}$ lie on the same circle $C_{1}$ and assume that there is at least one vertex on the other circle $C_{2}$. See \Cref{fig:cylindrical-all-pairs_b} for an illustration. By \Cref{lem:cylindrical-rim}, there exists a path $\mathcal{P}_{2}$ consisting of completely uncrossed rim edges that visits all vertices of $C_{2}$. For connecting the remaining vertices, assume first that all rim edges on $C_{1}$ are completely uncrossed. Let then $\mathcal{P}_{1}$ be the path of completely uncrossed rim edges starting at $v_{a}$ and visiting all vertices in clockwise order on $C_{1}$ until one vertex before $v_{b}$. Accordingly, let $\mathcal{P}_{3}$ be the path of completely uncrossed rim edges starting at $v_{b}$ and visiting all vertices in clockwise order on $C_{1}$ until one vertex before $v_{a}$. By this, $\mathcal{P}_{1}$ and $\mathcal{P}_{3}$ cover all vertices of~$C_{1}$. In the remaining case, when there exists a unique rim edge $f_{1}$ on $C_{1}$ with crossings, we assume without loss of generality that $f_{1}$ lies between $v_{a}$ and $v_{b}$ in clockwise direction along~$C_{1}$. Let then $\mathcal{P}_{1}$ be the completely uncrossed path starting at $v_{a}$ and visiting all vertices in clockwise order on $C_{1}$ until the first end-vertex of~$f_{1}$. For the path $\mathcal{P}_{3}$, we start at $v_{b}$ and visit all vertices in clockwise order on $C_{1}$ via completely uncrossed rim edges until the last vertex $v$ before $v_{a}$. If~$v_{b}$ is the second end-vertex of $f_{1}$, then $\mathcal{P}_{1}$ and $\mathcal{P}_{3}$ already cover all vertices of $C_{1}$. Otherwise, we extend $\mathcal{P}_{3}$ by the edge $e_{1}$ from $v$ to the last vertex before $v_{b}$ in clockwise order on $C_{1}$ and continue from there in counter-clockwise order along $C_{1}$, again via completely uncrossed rim edges, until we reach the second end-vertex of $f_{1}$. We then connect the three paths $\mathcal{P}_{1}$, $\mathcal{P}_{2}$, and $\mathcal{P}_{3}$ with two lateral edges $e$ and $e'$ to a Hamiltonian path $\mathcal{P}$ with end-vertices $v_{a}$ and~$v_{b}$. In particular, there are two choices on how to connect the end-vertices of $\mathcal{P}_{1}$ and $\mathcal{P}_{3}$ that are different from $v_{a}$ and $v_{b}$ (unless the respective path has only one vertex) with the end-vertices of $\mathcal{P}_{2}$ (unless $\mathcal{P}_{2}$ has only one vertex, but then the unique choice is crossing-free). At least one of those choices is a non-crossing edge pair $e$ and~$e'$ because there can be at most one crossing induced by any $4$-tuple of vertices. Consequently, the only potential crossings in $\mathcal{P}$ are between the connection edges $e$ and $e'$, and the non-rim edge $e_{1}$, if it exists. However, since in that case $f_{1}$ splits the lateral face into two areas such that $e_{1}$ cannot enter the area containing $e$ and $e'$, $\mathcal{P}$ is again crossing-free.

Finally, if all vertices lie on one circle, then, by \Cref{obs:cylindrical-2-page}, $\mathcal{D}$ is strongly isomorphic to a $2$-page-book drawing. Since every $2$-page-book drawing is weakly isomorphic to an x-monotone drawing, the statement follows by \Cref{prop:all-hp-x-mon}. This completes the proof.
\end{proof}

\vspace{-1pt}
Again, by \Cref{cor:cylindrical-conjectures}, this implies that also \Cref{conj:main} holds for cylindrical drawings.

\begin{corollary}\label{cor:cfhc-cylindrical}
Let $\mathcal{D}$ be a cylindrical drawing of $K_{n}$ with $n \geq 3$. Then $\mathcal{D}$ contains a crossing-free Hamiltonian cycle.
\end{corollary}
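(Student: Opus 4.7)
The plan is to derive \Cref{cor:cfhc-cylindrical} as an immediate consequence of \Cref{thm:all-hp-cylindrical} by appealing to the implication machinery already set up in \Cref{cor:cylindrical-conjectures}. Concretely, every cylindrical drawing possesses a completely uncrossed rim edge, so a crossing-free Hamiltonian path whose endpoints are the two vertices of that edge can be closed into a crossing-free Hamiltonian cycle via \Cref{obs:conjectures-uncrossed-edge}.

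In detail, first I would invoke \Cref{lem:cylindrical-rim} to pick a completely uncrossed rim edge $e = \{v_a, v_b\}$ in $\mathcal{D}$; for $n \geq 3$ at least one of the two circles carries at least two vertices, and the lemma guarantees that all but at most one rim edge per circle is completely uncrossed, so such an $e$ exists (the only configurations where this requires a second thought are very small vertex counts like $n = 3$ or a $(2,2)$ split for $n = 4$, and in those cases \Cref{obs:cylindrical-crossings} rules out the hypothetical crossings that would otherwise be problematic, in particular rim edges on different circles cannot cross because neither guards an endpoint of the other). Next I would apply \Cref{thm:all-hp-cylindrical} to the chosen pair $(v_a, v_b)$ to obtain a crossing-free Hamiltonian path $\mathcal{P}$ in $\mathcal{D}$ with end-vertices $v_a$ and $v_b$. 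Finally, by \Cref{obs:conjectures-uncrossed-edge}, adding the edge $e$ to $\mathcal{P}$ introduces no new crossings and produces the desired crossing-free Hamiltonian cycle.

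I do not expect any serious obstacle here, since the entire content of the statement is already carried by \Cref{thm:all-hp-cylindrical}, while \Cref{cor:cylindrical-conjectures} packages exactly the \enquote{path-to-cycle} step described above. The only subtlety is that \Cref{conj:stronger} gives us a path between \emph{any} prescribed pair of vertices, so we are free to tailor that pair to the endpoints of the uncrossed rim edge supplied by \Cref{lem:cylindrical-rim} — this is precisely what makes the implication from \Cref{thm:all-hp-cylindrical} to \Cref{cor:cfhc-cylindrical} work.
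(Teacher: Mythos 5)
Your proposal is correct and is essentially the paper's own argument: the paper derives the corollary in one line from \Cref{cor:cylindrical-conjectures}, which packages exactly the steps you spell out (\Cref{lem:cylindrical-rim} to get a completely uncrossed rim edge, \Cref{thm:all-hp-cylindrical} for the path between its endpoints, and \Cref{obs:conjectures-uncrossed-edge} to close the cycle). Your extra check that an uncrossed rim edge exists even for small vertex counts is a careful touch the paper leaves implicit, but it does not change the route.
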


\section{Relations between classes of simple drawings}\label{sec:relations}

\vspace{-1pt}
This section is devoted to analyzing inclusion relations between different classes of simple drawings, in particular, those that are marked darkorange in \Cref{fig:the-class-order}.
\vspace{-2pt}

\subsection{\texorpdfstring{$X$}{X}-bounded drawings}\label{sec:x-bound-relate}
\vspace{-1pt}

We start by showing that every $x$-bounded drawing $\mathcal{D}$ of $K_{n}$ is weakly isomorphic to an \mbox{$x$-}mono\-tone drawing. This also follows from a result by Balko, Fulek, and Kyn{\v{c}}l~\cite[Lemma~4.8]{bfk-2015-cnccmd}. However, since their proof relies on many other results in that paper, which makes the ideas behind it hard to grasp, we present a self-contained proof here. The main ingredient for our proof is given by the following lemma.

\begin{lemma}\label{lem:x-bound-main}
Let $e = \{ v_{a}, v_{b} \}$ be an edge and $v$ a vertex with $x$-coordinate between $v_{a}$ and $v_{b}$ in an $x$-bounded drawing of $K_{n}$. Then $e$ crosses the vertical line through $v$ either above $v$ or below $v$, at least once, but never on both sides.
\end{lemma}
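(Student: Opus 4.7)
The plan is to establish the existence of a crossing directly, and prove one-sidedness by contradiction using a Jordan-curve trap on an edge incident to $v$. Let $\ell_v$ denote the vertical line through $v$. For existence, I would note that $v_a$ and $v_b$ lie strictly on opposite sides of $\ell_v$, so the Jordan arc $e$ must cross $\ell_v$ at least once; since the drawing is simple and $v \notin \{v_a, v_b\}$, no such crossing occurs at $v$, hence every crossing of $e$ with $\ell_v$ is strictly above or strictly below $v$.

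For the one-sided property, suppose for contradiction that $e$ crosses $\ell_v$ both above and below $v$. Parameterize $e$ from $v_a$ to $v_b$ and let $q_1, \dots, q_k$ be its crossings with $\ell_v$ in traversal order. Between consecutive crossings $e$ stays in one open half-plane, and these half-planes alternate left/right since $e$ starts on the left at $v_a$ and ends on the right at $v_b$. Because the heights of the $q_j$ are assumed to take both signs, the sign sequence must change somewhere, so there exists an index $i$ with $q_i$ and $q_{i+1}$ on opposite sides of $v$. Let $\alpha$ be the sub-arc of $e$ from $q_i$ to $q_{i+1}$, which lies entirely in one open half-plane; by symmetry assume it lies in $\{x > x_v\}$. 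Together with the segment $\beta$ of $\ell_v$ between $q_i$ and $q_{i+1}$, which contains $v$ in its interior, $\alpha$ forms a simple closed Jordan curve $\gamma = \alpha \cup \beta$ in the closed right half-plane $\{x \geq x_v\}$. Since the unbounded left half-plane is disjoint from $\gamma$, it lies in the exterior of $\gamma$, so $\gamma$ bounds a region $R \subseteq \{x \geq x_v\}$.

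To derive the contradiction I would consider the edge $f = \{v, v_b\}$, which exists because we are in $K_n$. By $x$-boundedness (and genericity) $f$ lies in the strip $[x_v, x_{v_b}]$ and meets $\ell_v$ only at $v$, so immediately after leaving $v$ it lies in the open right half-plane. Near $v$, the region $R$ is on the right of $\beta$, so $f$ enters $R$ at the start. Because $\alpha$ is an interior sub-arc of $e$, it does not contain $v_b$ and does not reach the vertical line through $v_b$; consequently $\gamma \subseteq \{x < x_{v_b}\}$ and $v_b$ lies in the exterior of $\gamma$. Therefore $f$ must exit $R$ in order to reach $v_b$, but it cannot cross $\alpha \subseteq e$ (as $f$ and $e$ share only the endpoint $v_b$ in a simple drawing) nor $\beta \subseteq \ell_v$ (by $x$-boundedness). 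This is a contradiction; the symmetric case where $\alpha$ lies in the left half-plane is handled identically using the edge $\{v_a, v\}$ in place of $\{v, v_b\}$.

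The main obstacle I expect is the topological book-keeping: identifying the correct Jordan curve from a pair of consecutive-in-traversal crossings on opposite sides of $v$, and verifying that the bounded region it encloses is disjoint from $v_b$ (respectively $v_a$). The argument crucially uses that under $x$-boundedness the edges incident to $v$ meet $\ell_v$ only at $v$, which is precisely what makes the topological trap work; a minor technical caveat is to assume the representation is in general position so that edges touch the boundaries of their strips only at endpoints.
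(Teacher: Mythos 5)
Your proof is correct, and it reaches the lemma by a genuinely different route than the paper. The paper argues directly: placing $v$ without loss of generality below $e$, it invokes \Cref{obs:x-bound-edge-above-below} to conclude that \emph{both} incident edges $f_{1} = \{ v_{a}, v \}$ and $f_{2} = \{ v, v_{b} \}$ lie entirely below $e$, and then uses the concatenated barrier $f_{1} \cup f_{2}$ to separate $e$ from the downward vertical ray at $v$ within the strip, so that $e$ can only meet the vertical line above $v$. You instead argue by contradiction, extracting a Jordan curve from a sub-arc of $e$ between two consecutive line-crossings on opposite sides of $v$ together with the intervening segment of the vertical line, and trapping a \emph{single} incident edge ($\{ v, v_{b} \}$ or $\{ v_{a}, v \}$, depending on which half-plane the sub-arc occupies) inside that curve. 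Both arguments hinge on exactly the resource the paper identifies as essential, namely that completeness guarantees the edges from $v$ to the endpoints of $e$ exist; your version is self-contained (it does not route through \Cref{obs:x-bound-edge-above-below}), while the paper's version additionally records \emph{which} side of $v$ the crossings lie on (the side of $e$ opposite to $v$'s side), a correlation that is convenient for the later observations on the partial orders $<_{v}$. Your general-position caveats (tangential touches of edges with the bounding vertical lines) are the same informalities the paper silently tolerates and can be dispensed with by noting that an edge confined to a closed half-plane can touch but never cross the bounding line, so the trap still closes.
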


\begin{proof}
Recall from \Cref{sec:all-pair-proofs} that $e$ splits the vertical strip between $v_{a}$ and $v_{b}$ into two sides, one above $e$ and one below $e$. We assume, without loss of generality, that $v$ lies below $e$ and that $v_{a}$ is left of $v_{b}$; see \Cref{fig:special-x-bound_a} for an illustration. Further, we consider the edges $f_{1} = \{ v_{a}, v \}$ and $f_{2} = \{ v, v_{b} \}$. By \Cref{obs:x-bound-edge-above-below}, both $f_1$ and $f_2$ lie completely below $e$ and, by $x$-boundedness, $f_{1}$ lies left of the vertical line through $v$ and $f_{2}$ lies to its right. Hence, the union of $f_{1}$ and $f_{2}$ splits the vertical strip between $v_{a}$ and $v_{b}$ into two parts, an upper and a lower part, such that $e$ is contained in the upper part and the vertical ray that starts in $v$ and goes downwards is contained in the lower part. Consequently, $e$ only crosses the vertical line through $v$ above $v$, which it must cross at least once to connect $v_{a}$ and $v_{b}$.
\end{proof}

\begin{figure}[h]
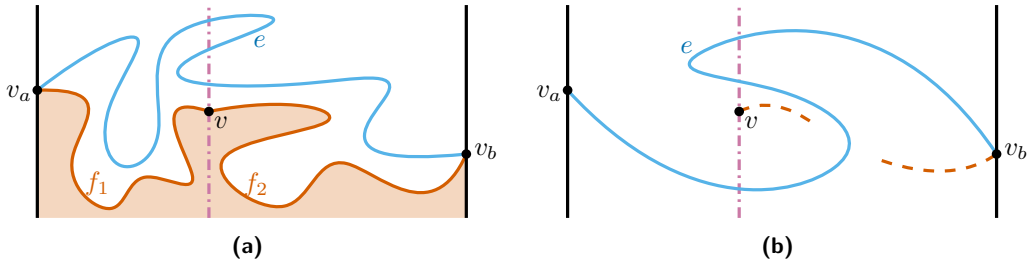

\centering
\subcaptionbox{\centering\label{fig:special-x-bound_a}}[.49\textwidth]{\includegraphics[page=1]{Figures/special_x_bound.pdf}}
\subcaptionbox{\centering\label{fig:special-x-bound_b}}[.49\textwidth]{\includegraphics[page=2]{Figures/special_x_bound.pdf}}
\caption{\textbf{(a)}~The edge $e$ can only cross the vertical violet line through $v$ on one side, above or below, because the other side lies in the orange area bounded by $f_{1}$ and $f_{2}$. \textbf{(b)}~The edge $e$ crosses the vertical line through $v$ on both sides; the dashed darkorange edge $f_{2} = \{ v, v_{b} \}$ cannot be inserted anymore within the vertical strip between $v$ and $v_{b}$ without crossing its incident edge~$e$.}
\label{fig:special-x-bound}
\end{figure}

It is crucial for the proof that both edges $f_{1}$ and $f_{2}$ exist in the drawing. In the other direction, if the edge $e$ crosses the vertical line through $v$ on both sides, then at least one of those two edges cannot be added in an $x$-bounded way anymore; see \Cref{fig:special-x-bound_b}. Hence, \Cref{lem:x-bound-main} and subsequent results only hold for $x$-bounded drawings of complete graphs.

From here on the arguments are mostly of technical nature. We start by introducing, for each vertex $v$ of an $x$-bounded drawing $\mathcal{D}$, a partial order $<_{v}$ on the set of edges in $\mathcal{D}$, defined by the following four conditions for $e <_{v} f$:

\begin{itemize}
\item $e$ and $f$ are incident to $v$, and $e$ leaves $v$ below $f$ on the same side (left or right),
\item $e$ crosses the vertical line through $v$ below $v$ and $f$ is incident to $v$,
\item $f$ crosses the vertical line through $v$ above $v$ and $e$ is incident to $v$, or
\item the vertical line through $v$ is crossed below $v$ by $e$ and above $v$ by $f$.
\end{itemize}

For a fixed vertex $v$, any of these four conditions potentially induces an order between two edges. However, by \Cref{lem:x-bound-main}, for each pair of edges at most one of the four conditions holds and the forth condition is well-defined. Note that there is no relation between two edges $e$ and $f$ if either they both cross the vertical line through $v$ on the same side of $v$ or one lies completely to the left of $v$ and the other lies completely to the right of $v$. Further, it can be verified that antisymmetry and transitivity are fulfilled. Since no edge gets related to itself, the conditions in fact induce a partial order. In \Cref{fig:special-x-bound_2} we show two examples of such a partial order of edges at a vertex.

\begin{figure}[h]
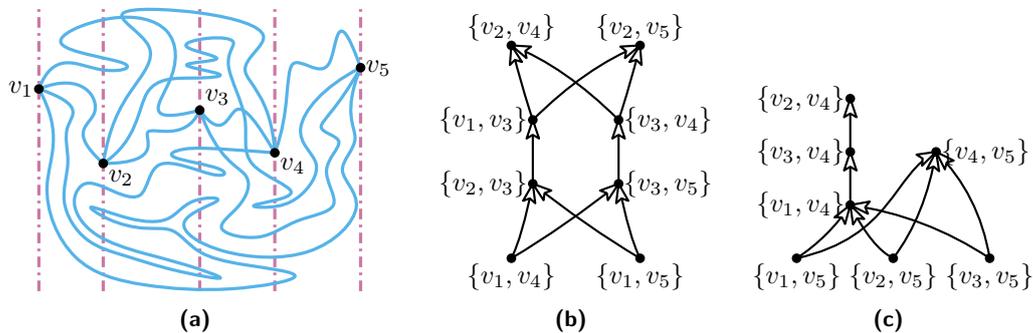

\centering
\subcaptionbox{\centering\label{fig:special-x-bound_2a}}[.40\textwidth]{\includegraphics[page=3]{Figures/special_x_bound.pdf}}
\subcaptionbox{\centering\label{fig:special-x-bound_2b}}[.29\textwidth]{\includegraphics[page=4]{Figures/special_x_bound.pdf}}
\subcaptionbox{\centering\label{fig:special-x-bound_2c}}[.29\textwidth]{\includegraphics[page=5]{Figures/special_x_bound.pdf}}
\caption{\textbf{(a)}~A realization of the twisted drawing $\mathcal{T}_{5}$ as a quite wiggly $x$-bounded drawing; the violet lines mark the bounds for the edges. \textbf{(b)}~A Hasse diagram for the partial order~$<_{v_{3}}$ of the drawing in (a). \textbf{(c)}~A Hasse diagram for the partial order~$<_{v_{4}}$ of the drawing in (a).}
\label{fig:special-x-bound_2}
\end{figure}

We next discuss how to determine all crossings of an $x$-bounded drawing of $K_{n}$ by those partial orders. Recall that we also have a total order $<$ on the vertices from left to right. In particular, for $v < w$, the partial orders $<_{v}$ and $<_{w}$ determine, for two comparable edges, in which order from bottom to top they enter and leave, respectively, the vertical strip between $v$ and $w$. See \Cref{fig:special-x-bound_3a} for an example illustration of the following observation.

\begin{observation}\label{obs:x-bound-crossings-1}
Let $\mathcal{D}$ be an $x$-bounded drawing of $K_{n}$. If, for two edges $e$ and $f$ and vertices $v < w$ in $\mathcal{D}$, the inequalities $e <_{v} f$ and $e >_{w} f$ hold, then $e$ and $f$ have a crossing in the vertical strip between $v$ and $w$.
\end{observation}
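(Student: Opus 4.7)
The plan is to prove this by a Jordan-curve argument inside the closed vertical strip
$\overline{S}=[x_{v},x_{w}]\times\mathbb{R}$ bounded by the vertical lines $\ell_{v}$ through $v$ and $\ell_{w}$ through $w$. The key idea is to extract from $e$ and $f$ two sub-arcs that fully live in $\overline{S}$, connect $\ell_{v}$ to $\ell_{w}$, and whose endpoint heights on the two sides are in opposite order; such sub-arcs are then forced to cross by a standard planar topological fact.

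First I would check that, in order for $e$ to be comparable under both $<_{v}$ and $<_{w}$, the $x$-range of $e$ must cover $[x_{v},x_{w}]$ (and analogously for $f$). Parametrising $e$ from $v_{a}$ to $v_{b}$, I would set $t_{v}^{e}=\max\{t:e(t)\in\ell_{v}\}$ and $t_{w}^{e}=\min\{t>t_{v}^{e}:e(t)\in\ell_{w}\}$; both are well defined, and by the maximality of $t_{v}^{e}$ and minimality of $t_{w}^{e}$ the restriction $\alpha_{e}:=e([t_{v}^{e},t_{w}^{e}])$ has its interior in the open strip $S^{o}:=(x_{v},x_{w})\times\mathbb{R}$, with endpoints $q_{v}^{e}\in\ell_{v}$ and $q_{w}^{e}\in\ell_{w}$ (a short argument: if $e$ left $\overline{S}$ to the left after $t_{v}^{e}$, it would have to hit $\ell_{v}$ again, contradicting maximality). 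The sub-arc $\alpha_{f}$ with endpoints $q_{v}^{f},q_{w}^{f}$ is defined analogously.

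Second, I would translate the hypotheses $e<_{v}f$ and $e>_{w}f$ into height comparisons of these endpoints. A short case check over the four conditions defining $<_{v}$ shows that, after possibly replacing $\ell_{v}$ by a vertical line $\ell_{v}^{+}$ at $x$-coordinate $x_{v}+\delta$ for arbitrarily small $\delta>0$ (and symmetrically $\ell_{w}^{-}$ at $x_{w}-\delta$), the endpoint $q_{v}^{e}$ is strictly below $q_{v}^{f}$ and $q_{w}^{e}$ is strictly above $q_{w}^{f}$. Lemma~\ref{lem:x-bound-main} is exactly what guarantees that the heights of crossings of $e$ and $f$ with the perturbed lines respect the orders $<_{v}$ and $<_{w}$, because all crossings lie on a single side of $v$ (resp.\ $w$) and cannot change sides.

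Finally, I would close up a Jordan curve $J$ in $\overline{S}$ from $\alpha_{e}$, the vertical segment on $\ell_{w}$ between $q_{w}^{e}$ and $q_{w}^{f}$, $\alpha_{f}$ traversed backwards, and the vertical segment on $\ell_{v}$ between $q_{v}^{f}$ and $q_{v}^{e}$; the opposite orders on $\ell_{v}$ and $\ell_{w}$ make this a closed curve that zig-zags across the strip, and by the Jordan curve theorem (applied after truncating $\overline{S}$ vertically to a large rectangle containing everything), $\alpha_{e}$ and $\alpha_{f}$ must meet in the open interior of the strip. Since interiors of $\alpha_{e}$ and $\alpha_{f}$ avoid $\ell_{v}$ and $\ell_{w}$, this intersection lies strictly between the two lines and is a genuine crossing of $e$ and $f$.

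The main obstacle I foresee is the degenerate case in which $e$ and $f$ share an endpoint on $\ell_{v}$ or $\ell_{w}$ (in particular the first condition of $<_{v}$, where both edges are incident to $v$), so that $q_{v}^{e}=q_{v}^{f}=v$ and the ``strictly below'' comparison fails naively. The perturbation to $\ell_{v}^{+}$ described above is the device to eliminate this case: for all sufficiently small $\delta>0$ the two arcs hit $\ell_{v}^{+}$ at distinct heights whose order is precisely the one encoded by $<_{v}$, because $e$ and $f$ leave $v$ on the same side with strictly different local directions; the rest of the argument is unaffected.
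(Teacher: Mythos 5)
Your proposal is correct and follows the same route the paper takes: the paper leaves this as an observation justified only by Figure~\ref{fig:special-x-bound_3a}, whose caption describes exactly your sub-arcs (the portions of $e$ and $f$ between the last entry point on the left line and the first subsequent exit point on the right line) being forced to cross inside the strip. Your write-up simply makes the cross-cut/Jordan argument and the endpoint-order translation via Lemma~\ref{lem:x-bound-main} explicit, including the perturbation needed when both edges are incident to $v$ or $w$, which is the right way to handle that degenerate case.
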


Especially, if two edges are nested or linked, then they are always comparable at exactly two of their four end-vertices. Hence, using \Cref{obs:x-bound-crossings-1} and extending \Cref{obs:x-bound-edge-above-below}, we can classify in which pattern two edges $e$ and $f$ have to pass below or above each others end-vertices to form a crossing, depending on whether $e$ and $f$ are nested or linked; if they are separated, they cannot cross anyway. See \Cref{fig:special-x-bound_3b} for an illustration of some cases from the following observation.

\begin{observation}\label{obs:x-bound-crossings-2}
Let $\mathcal{D}$ be an $x$-bounded drawing of $K_{n}$ with vertices $v_{1}, \ldots, v_{n}$ from left to right. Let $e = \{ v_{a}, v_{b} \}$ and $f = \{ v_{c}, v_{d} \}$ be two edges in~$\mathcal{D}$ with $a \leq c$ and, by convention, $a < b$ and $c < d$. Then $e$ and $f$ cross if and only if one of the following two conditions holds:
\begin{itemize}
\item $e$ and $f$ are nested, and ($e <_{v_{c}} f$ and $e >_{v_{d}} f$) or ($e >_{v_{c}} f$ and $e <_{v_{d}} f$); or
\item $e$ and $f$ are linked, and ($e <_{v_{c}} f$ and $e >_{v_{b}} f$) or ($e >_{v_{c}} f$ and $e <_{v_{b}} f$).
\end{itemize}
\end{observation}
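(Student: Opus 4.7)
The plan is to split the argument by the structural relationship between $e$ and $f$ (separated, nested, linked) and, within each case, to combine \Cref{obs:x-bound-crossings-1} for the ``if'' direction with \Cref{lem:x-bound-main} and \Cref{obs:x-bound-edge-above-below} for the ``only if'' direction. The separated case is immediate: if $a<b<c<d$, then the vertical strips of $e$ and $f$ are disjoint, so $x$-boundedness rules out a crossing, consistent with neither reversed-inequality pattern being listed.

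For the ``if'' direction of the nested and linked cases, I would observe that the hypothesized pair of reversed inequalities is taken at two vertices lying in the common $x$-range of $e$ and $f$ --- namely $v_c$ and $v_d$ in the nested case, and $v_c$ and $v_b$ in the linked case. \Cref{obs:x-bound-crossings-1} then directly produces a crossing inside the vertical strip between those two vertices.

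For the ``only if'' direction of the nested case, both endpoints $v_c$ and $v_d$ of $f$ lie strictly inside the vertical strip of $e$, so \Cref{lem:x-bound-main} forces $e$ to cross each of the vertical lines through $v_c$ and $v_d$ strictly above or strictly below the respective vertex. Via conditions~2 and~3 of the definition of $<_v$, this makes both $<_{v_c}$ and $<_{v_d}$ comparable for the pair $e,f$. If $v_c$ and $v_d$ were on the same side of $e$, then \Cref{obs:x-bound-edge-above-below} would confine all of $f$ to that side, contradicting the assumed crossing; so $v_c$ and $v_d$ must lie on opposite sides of $e$, which is exactly one of the two reversed-inequality patterns in the statement.

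The linked case is the main technical obstacle, because the two comparison vertices $v_c$ and $v_b$ play asymmetric roles: $v_c$ is an endpoint of $f$ lying in the strip of $e$, while $v_b$ is an endpoint of $e$ lying in the strip of $f$. \Cref{lem:x-bound-main} nonetheless applies to $e$ at $v_c$ and to $f$ at $v_b$, making both $<_{v_c}$ and $<_{v_b}$ comparable for $e,f$ via conditions~2 or~3, according to whether $e$ passes below or above $v_c$ and whether $f$ passes below or above $v_b$. I would then restrict attention to the overlap strip $[v_c, v_b]$, on whose left boundary $f$ is anchored at $v_c$ and on whose right boundary $e$ is anchored at $v_b$, and enumerate the four above/below combinations. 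In two of them the relative vertical order of $e$ and $f$ on the two boundary lines agrees, so no crossing can occur (since $e$ and $f$ coexist only inside this strip), while the other two combinations are exactly the two reversed-inequality patterns in the statement and yield a crossing by the already established ``if'' direction.
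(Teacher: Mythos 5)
Your proposal is correct and follows essentially the same route as the paper, which likewise derives the crossing cases from \Cref{obs:x-bound-crossings-1}, the nested non-crossing case by extending \Cref{obs:x-bound-edge-above-below}, and dismisses separated pairs outright. Your argument for the agreeing linked case is the same parity idea the paper invokes (one edge cannot reach the other side without crossing at least twice, contradicting simplicity), just phrased in terms of the overlap strip.
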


\begin{figure}[h]
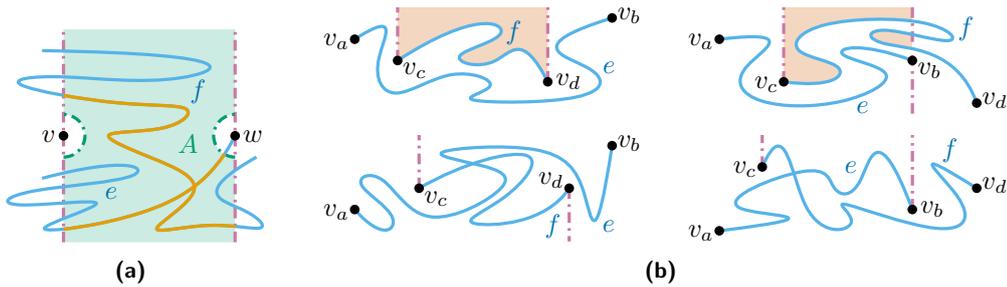

\centering
\subcaptionbox{\centering\label{fig:special-x-bound_3a}}[.32\textwidth]{\includegraphics[page=6]{Figures/special_x_bound.pdf}}
\subcaptionbox{\centering\label{fig:special-x-bound_3b}}[.66\textwidth]{\includegraphics[page=7]{Figures/special_x_bound.pdf}}
\caption{\textbf{(a)}~Illustration of \Cref{obs:x-bound-crossings-1}: The orange parts of $e$ and $f$ (between the last entry point from the left and the first exit point after that to the right) must cross within the seagreen area $A$ (the vertical strip between $v$ and $w$ excluding a small $\varepsilon$-ball around $v$ and $w$, each). \textbf{(b)}~Illustrations of \Cref{obs:x-bound-crossings-2}: Two nested cases on the left and two linked cases on the right. In the bottom cases the edges cross by \Cref{obs:x-bound-crossings-1}. In the top cases the edges cannot cross because $e$ cannot enter the darkorange areas above $f$ without crossing $f$ at least twice.}
\label{fig:special-x-bound_3}
\end{figure}

We stated the above observation for $a \leq c$ to also cover the case of incident edges, which cannot cross anyway. With this, we are ready to prove that every $x$-bounded drawing of $K_{n}$ is weakly isomorphic to an $x$-monotone drawing. This is similar in spirit to a result shown by Fulek, Pelsmajer, Schaefer, and {\v{S}}tefan{\-}kovi{\v{c}}~\cite[Lemma 2.6]{fpss-2013-htmdlp}: Every $x$-bounded drawing can be made $x$-monotone without changing the parity of crossings between any pair of edges or changing the rotation around any vertex. While their result also holds for drawings of non-complete graphs, however, in their setting the drawings need not be simple. In particular, even if the initial $x$-bounded drawing is simple, the resulting $x$-monotone drawing after applying their transformation might not be simple anymore.

\begin{theorem}\label{thm:x-bound-is-x-mon}
Let $\mathcal{D}$ be an $x$-bounded drawing of $K_{n}$ with vertices $v_{1}, \ldots, v_{n}$ in that order from left to right. Then there exists an $x$-monotone drawing $\mathcal{D}'$ that is weakly isomorphic to $\mathcal{D}$ and has the same vertex order.
\end{theorem}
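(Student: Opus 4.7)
The plan is to construct $\mathcal{D}'$ by a left-to-right sweep, taking the partial orders $<_{v_i}$ as a blueprint for how the edges should stack vertically along each vertical line $\ell_i$ through $v_i$. For every $i$, I let $E_i$ be the set of edges of $\mathcal{D}$ that cross $\ell_i$ or are incident to $v_i$, and extend $<_{v_i}$ to a linear order $\prec_i$ on $E_i$. I then place the intersection of each $e\in E_i$ with $\ell_i$ at a height consistent with $\prec_i$, keeping $v_i$ itself at its original height and placing endpoint edges of $v_i$ above or below it as $\prec_i$ dictates. Finally, in each strip between consecutive lines $\ell_i$ and $\ell_{i+1}$, I connect the prescribed heights of each surviving edge segment by a simple $x$-monotone arc, arranging the arcs so that two of them cross exactly when their relative positions in $\prec_i$ and $\prec_{i+1}$ disagree.

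For $\mathcal{D}'$ to be weakly isomorphic to $\mathcal{D}$, the crossings must match. By \Cref{obs:x-bound-crossings-2}, two non-incident edges $e$ and $f$ cross in $\mathcal{D}$ exactly when their partial-order relationship inverts between the specific vertices listed there, and this is precisely the type of $\prec$-flip that forces a crossing in $\mathcal{D}'$. Hence, to avoid spurious crossings, the key is to pick the extensions $\prec_i$ so that every pair of non-crossing edges keeps its relative order throughout the sweep, while every pair of crossing edges flips exactly once in the relevant strip.

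The main obstacle is orchestrating the linear extensions consistently. I would build them inductively: given $\prec_1,\ldots,\prec_i$, when extending $<_{v_{i+1}}$ to $\prec_{i+1}$ I inherit, for every pair of edges active at both $v_i$ and $v_{i+1}$ that does not cross in $\mathcal{D}$ inside the intermediate strip, its relative order from $\prec_i$. Combined with $<_{v_{i+1}}$, this yields a relation on $E_{i+1}$ which I would argue is a (cycle-free) partial order and may then be extended arbitrarily to a linear order. The delicate point is verifying that no inherited order conflicts with $<_{v_{i+1}}$: by \Cref{obs:x-bound-crossings-1} and \Cref{obs:x-bound-crossings-2}, such a conflict would already force a crossing of the two edges in the strip, which the construction excludes. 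Once all $\prec_i$ are fixed, realizing each strip is routine, since a prescribed set of adjacent swaps between two linear orders can always be drawn by a simple $x$-monotone wiring diagram with one crossing per swap. The resulting $\mathcal{D}'$ is then $x$-monotone, retains the left-to-right vertex order of $\mathcal{D}$, and by the correspondence between $\prec$-flips and crossings is weakly isomorphic to $\mathcal{D}$.
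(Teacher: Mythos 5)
Your overall strategy coincides with the paper's: sweep the vertical strips from left to right, use the partial orders $<_{v_i}$ as the combinatorial blueprint, realize each strip as an $x$-monotone wiring diagram in which a crossing occurs exactly where the linear orders on the two boundaries disagree, and invoke \Cref{obs:x-bound-crossings-1,obs:x-bound-crossings-2} to match crossings. The difference is in how the linear orders are produced, and that is where your argument has a genuine gap. You define the relation on $E_{i+1}$ as the union of $<_{v_{i+1}}$ with the orders \emph{inherited} from $\prec_i$ for pairs that do not cross in the current strip, and then "extend arbitrarily." Two problems follow. First, a pair $e,f$ that crosses in $\mathcal{D}$ inside the strip between $v_j$ and $v_{j+1}$ but is \emph{incomparable} under $<_{v_{j+1}}$ (both edges pass $v_{j+1}$ on the same side) is constrained by nothing in your relation, so an arbitrary extension may keep its old order; the crossing is then not realized in that strip, the unflipped order is inherited onward through all later strips (where $e$ and $f$ can no longer cross, $\mathcal{D}$ being simple), and it eventually collides with the reversed comparison that \Cref{obs:x-bound-crossings-2} guarantees at a later end-vertex. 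So either a crossing of $\mathcal{D}$ is lost or your induction breaks downstream. Second, your acyclicity argument only rules out \emph{pairwise} conflicts between an inherited order and $<_{v_{i+1}}$; it does not exclude directed cycles of length three or more in which each arc comes from a different source (e.g.\ $e\to f$ inherited, $f\to g$ from $<_{v_{i+1}}$, $g\to e$ inherited, with $f$ and $g$ crossing in the strip so that no pairwise contradiction arises). Ruling those out requires a further argument that you do not supply.

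The paper avoids both issues by never extending a partial order at all: the linear order on the right boundary of each strip is defined \emph{explicitly} by splitting the edges into three blocks at $v_{i+1}$ (passing below, ending at, passing above), keeping the relative order from the left boundary inside each block, and concatenating the blocks. This forces a flip exactly when $<_{v_{i+1}}$ demands one (so crossings happen at the latest admissible moment, in the rightmost strip permitted by \Cref{obs:x-bound-crossings-1}), makes acyclicity automatic, and reduces the correctness proof to checking that each order change corresponds to exactly one crossing of $\mathcal{D}$ and vice versa. If you replace your "arbitrary linear extension" step by such a deterministic rule (or prove the acyclicity and the forced-flip property for your relation), your proof goes through along the same lines as the paper's.
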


\begin{proof}
We will construct an $x$-monotone drawing $\mathcal{D}'$ that has the exact same set of crossing edge pairs as $\mathcal{D}$ and the same vertex order $v_{1}, \ldots, v_{n}$ from left to right; see \Cref{fig:x-bound-to-x-mon} for an example illustration of the following steps.

We place the vertices $v_{1}, \ldots, v_{n}$ in that order on a horizontal line. For the edges, we consider the vertical strips between the vertices from left to right and, in each strip, add all edges simultaneously as follows. We define orders on where the edges enter the strip from the left as well as where they leave the strip to the right, from bottom to top in both cases. Subsequently, we connect all the respective entry and exit points by line segments.

In detail, we inductively consider the vertical strip between and including vertices $v_{i}$ and~$v_{i+1}$, for $i=1,\ldots, n-1$. We can assume that all entry points from the left of edges crossing the vertical line through $v_{i}$ below or above $v_{i}$ are given by the exit points to the right in the vertical strip between vertices $v_{i-1}$ and $v_{i}$. These entry points induce a linear order of those edges from bottom to top, which we denote by~$<_{i}^{\leftarrow}$; the order on the left boundary of the strip between $v_{i}$ and $v_{i+1}$. For the first strip between vertices $v_{1}$ and $v_{2}$, there are no such edges entering from the left, that is, the order is empty at this point. In any case, it remains to add the edges incident to $v_{i}$ to this order.

For that, we insert all the edges incident to $v_{i}$ that in $\mathcal{D}$ leave $v_{i}$ to the right into~$<_{i}^{\leftarrow}$ between the edges crossing the vertical line through $v_{i}$ below $v_{i}$ and those crossing the line above $v_{i}$. Moreover, we add the edges ordered according to the order $<_{v_{i}}$ in $\mathcal{D}$, which is the same as the order given by the counter-clockwise rotation of these edges at $v_i$. Hence the resulting order $<_{i}^{\leftarrow}$ agrees with the partial order $<_{v_{i}}$ in $\mathcal{D}$.

Next we create an order $<_{i}^{\rightarrow}$; the order on the right boundary of the strip between $v_{i}$ and~$v_{i+1}$. To this end, we split the edges into three groups, namely, the ones leaving the strip below $v_{i+1}$, the ones ending in $v_{i+1}$, and the ones leaving the strip above~$v_{i+1}$. By \Cref{lem:x-bound-main}, these three groups are well-defined. Also note that the edges in each of those three groups are in general not consecutive in $<_{i}^{\leftarrow}$. To obtain $<_{i}^{\rightarrow}$, we start with all edges leaving the strip below $v_{i+1}$, continue with all edges ending in $v_{i+1}$, and finish with all edges leaving the strip above $v_{i+1}$, in each of the three groups keeping the relative order between the edges as given by $<_{i}^{\leftarrow}$. With that, for two edges in different of those three groups, $<_{i}^{\rightarrow}$~agrees with the partial order $<_{v_{i+1}}$ in~$\mathcal{D}$.

\begin{figure}[h]
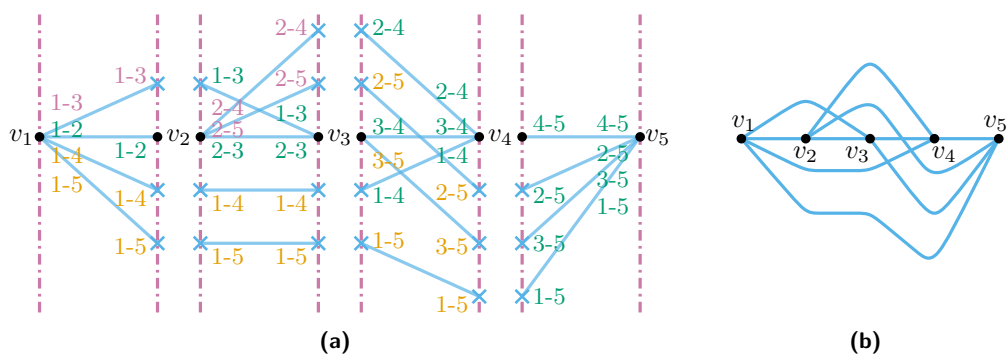

\centering
\subcaptionbox{\centering\label{fig:x-bound-to-x-mon-steps}}[.67\textwidth]{\includegraphics[page=8]{Figures/special_x_bound.pdf}}
\subcaptionbox{\centering\label{fig:x-bound-to-x-mon-final}}[.31\textwidth]{\includegraphics[page=9]{Figures/special_x_bound.pdf}}
\caption{\textbf{(a)}~The four steps of redrawing the $x$-bounded drawing from \Cref{fig:special-x-bound_2a} strip by strip into a weakly isomorphic $x$-monotone drawing. Note that each of the vertices $v_2$ to $v_4$ appears in two steps. The placement of the pairs \enquote{a-b}, for edges $\{ v_{a}, v_{b} \}$, from bottom to top corresponds to the orders~$<_{i}^{\leftarrow}$ and $<_{i}^{\rightarrow}$ on the left and right boundary of each strip, respectively. The colors of the pairs indicate the three groups, into which the edges are placed in $<_{i}^{\rightarrow}$: Orange for edges leaving the strip below $v_{i+1}$, seagreen for edges ending in $v_{i+1}$, and violet for edges leaving the strip above~$v_{i+1}$. \textbf{(b)}~The final result, with the edges smoothed at the transition points between the strips.}
\label{fig:x-bound-to-x-mon}
\end{figure}

Finally, we mark the exit points of the edges on the vertical line through $v_{i+1}$, in the order given by $<_{i}^{\rightarrow}$ from bottom to top, such that every edge passing below or above $v_{i+1}$ gets its individual exit point, while all edges ending in $v_{i+1}$ share the position of $v_{i+1}$ as their exit point. For each edge, we connect the corresponding entry point on the left boundary of the strip with the exit point on the right boundary of the strip by a line segment. Then two of those line segments, belonging to edges $e$ and~$f$, respectively, cross in $\mathcal{D}'$ if and only if the order of $e$ and $f$ changes between $<_{i}^{\leftarrow}$ and $<_{i}^{\rightarrow}$. Without loss of generality, let $e <_{i}^{\leftarrow} f$~and~$f <_{i}^{\rightarrow} e$. We next show, by applying \Cref{obs:x-bound-crossings-1}, that this crossing between the line segments belonging to $e$ and~$f$ in $\mathcal{D}'$ uniquely corresponds to a crossing of $e$~and~$f$~in~$\mathcal{D}$.

Since, within each of the three groups on the right boundary of the strip, we keep the relative order from the left boundary of the strip, $e$ and $f$ must have been placed into different groups on the right. Therefore, $f <_{i}^{\rightarrow} e$ implies that also $f <_{v_{i+1}} e$ holds.

Regarding the partial order $<_{v_{i}}$ the edges $e$ and $f$ might be incomparable. However, there must be a vertex $w \leq v_{i}$ such that $e$ and $f$ are comparable with respect to $<_{w}$; this is at least the case for the start-vertex of either $e$ or~$f$. Let $k \leq i$ be maximal such that $e$ and $f$ are comparable with respect to $<_{v_{k}}$. Hence, for all $k < j \leq i$, the edges $e$ and $f$ both cross the vertical line through $v_{j}$ on the same side of $v_{j}$. Since $<_{j}^{\leftarrow}$ agrees with~$<_{j-1}^{\rightarrow}$ for edges crossing above or below $v_{j}$, and $<_{j-1}^{\rightarrow}$ agrees with $<_{j-1}^{\leftarrow}$ for edges placed in the same group on the right boundary of the strip, $e <_{i}^{\leftarrow} f$ implies that $e <_{k}^{\leftarrow} f$ holds. Consequently, since $e$ and $f$ are comparable with respect to $<_{v_{k}}$, which agrees with $<_{k}^{\leftarrow}$, we get $e <_{v_{k}} f$. Hence, by \Cref{obs:x-bound-crossings-1}, $e$ and $f$ cross in the vertical strip between vertices $v_{k}$ and $v_{i+1}$ in~$\mathcal{D}$.

Moreover, $e$ and $f$ are comparable with respect to $<_{v_{i+1}}$. So any potential further crossing between $e$ and $f$ in $\mathcal{D}'$ would correspond to a crossing between $e$ and $f$ in $\mathcal{D}$ that lies to the right of the vertical line through $v_{i+1}$. This cannot exist because $\mathcal{D}$ is simple.

It remains to argue that every crossing in $\mathcal{D}$ also exists in $\mathcal{D}'$. By \Cref{obs:x-bound-crossings-2}, we know that every crossing in $\mathcal{D}$ is in one-to-one correspondence with a change of the order of the involved non-incident edges $e$ and $f$ between two partial orders $<_{v_{i}}$ and $<_{v_{j}}$ with $i<j$, that is, at two of the end-vertices of $e$ and $f$. This change implies that also the orders $<_{i}^{\leftarrow}$ and $<_{j}^{\rightarrow}$ change accordingly, which produces a crossing in the construction of~$\mathcal{D}'$.
\end{proof}

In our construction, edges cross at the latest possible moment, that is, in the rightmost strip of the area given by \Cref{obs:x-bound-crossings-1}. Also, we implicitly use \Cref{lem:x-bound-main} all the time because the orders $<_{v_{i}}$ would not be well defined otherwise. Hence, similar to \Cref{lem:x-bound-main}, \Cref{thm:x-bound-is-x-mon} does not hold for drawings of non-complete graphs. As an example, \Cref{fig:x-bound-non-complete_a} depicts an $x$-bounded drawing $\mathcal{D}_{b}$ for which, as we argue below, no weakly isomorphic $x$-monotone drawing $\mathcal{D}_{m}$ exists. That is, there is no $x$-monotone drawing with the same set of crossing edge pairs as in $\mathcal{D}_{b}$.

Assume for a contradiction that $\mathcal{D}_{m}$ exists. Note that Gioan's theorem~\cite[Theorem 3.10]{g-2022-cgdtm} does not hold for drawings of non-complete graphs. In particular, $\mathcal{D}_{b}$ and $\mathcal{D}_{m}$ might not even have the same rotation system, and structures like clouds or fishes that exist in $\mathcal{D}_{b}$ might not exist in $\mathcal{D}_{m}$ and vice versa. So, as we are not aware of any simpler method to show this, we check all $360$ possible orders, that is, permutations modulo reflection, of the $6$ vertices along the $x$-axis and argue in each case that the order is not possible for $\mathcal{D}_{m}$. In the following we explain the main steps and ideas, which might be of value on their own, but we leave the details to the interested reader.

We start with some purely combinatorial arguments to reduce the number of orders that we have to check in detail. As in \Cref{obs:x-bound-uncrossed}, neither the first nor the last two vertices in the order are allowed to induce an edge with crossings. For example, the order $(v_{1},v_{2},v_{3},v_{5},v_{4},v_{6})$ is not possible because the edge $\{ v_{4}, v_{6} \}$ needs to cross the edge $\{ v_{3}, v_{5} \}$. This kind of argument alone already leaves only $64$ possible orders. More generally, two separated edges can never cross in an $x$-monotone drawing. Hence, for example, the order $(v_{1},v_{2},v_{4},v_{3},v_{5},v_{6})$ is not possible because the edge $\{ v_{2}, v_{4} \}$ needs to cross the edge $\{ v_{3}, v_{5} \}$. This further narrows it down to $40$ potential orders. Furthermore, if an edge $e$ crosses a triangle $\Delta$ an odd number of times, then one of the end-vertices of $e$ must lie inside $\Delta$ and the other one outside. In particular, it is not possible that all three vertices of $\Delta$ lie between the end-vertices of $e$ in the $x$-monotone order because then both end-vertices of $e$ would definitely lie outside of $\Delta$. For example, the triangle $\{ v_{2}, v_{4}, v_{5} \}$ is crossed once by the edge $\{ v_{3}, v_{6} \}$, which forbids the order $(v_{1},v_{3},v_{2},v_{4},v_{5},v_{6})$. This eliminates $19$ more cases.

To rule out the remaining $21$ cases we use \Cref{obs:x-bound-crossings-2}, which establishes a connection between crossings, and edges passing below and above each others end-vertices. This observation also holds for non-complete graphs in the case of $x$-monotone drawings; the graph being complete in the case of $x$-bounded drawings is only needed to have the orders $<_{v}$ well defined. As an example, we consider the order $(v_{1},v_{2},v_{3},v_{4},v_{5},v_{6})$; see \Cref{fig:x-bound-non-complete_b} for an illustration. We first add the edge $e = \{ v_{1}, v_{6} \}$ such that, without loss of generality, $v_{2}$ lies below $e$. Then $v_{3}$ has to lie above $e$ because $\{ v_{2}, v_{3} \}$ crosses~$e$, $v_{4}$ also has to lie above $e$ because $\{ v_{3}, v_{4} \}$ does not cross $e$, and $v_{5}$ has to lie below $e$ again because $\{ v_{4}, v_{5} \}$ crosses~$e$. Further, the edge $\{ v_{2}, v_{6} \}$ passes below $v_{4}$ and does not cross $\{ v_{4}, v_{5} \}$, so it has to pass below $v_{5}$ as well. Finally, the edge $\{ v_{1}, v_{5} \}$ has to pass below $v_{2}$ to cross $\{ v_{2}, v_{6} \}$, but then it cannot cross $\{ v_{2}, v_{3} \}$ anymore; a contradiction. Applying a similar reasoning for the other $20$ cases then finishes the proof that there is no $x$-monotone drawing~$\mathcal{D}_{m}$ being weakly isomorphic to $\mathcal{D}_{b}$.

Let us remark that removing the edges $\{ v_{3}, v_{6} \}$ and $\{ v_{4}, v_{6} \}$ from $\mathcal{D}_{b}$ yields a sub-drawing $\mathcal{D}_{b}'$ for which a weakly isomorphic $x$-monotone drawing $\mathcal{D}_{m}'$ exists. It is reached by changing the position of the edges $\{ v_{2}, v_{4} \}$ and $\{ v_{3}, v_{5} \}$ in the rotation of their end-vertices though; see \Cref{fig:x-bound-non-complete_c}. This also emphasizes that for drawings of non-complete graphs two different rotation systems can produce the same set of crossing edge pairs.

Further, note that in $\mathcal{D}_{b}$ only the three edges $\{ v_{1}, v_{3} \}$, $\{ v_{1}, v_{4} \}$, and $\{ v_{5}, v_{6} \}$ are missing, which cannot be added in an $x$-bounded way anymore once $\{ v_{1}, v_{6} \}$ is fixed; recall \Cref{fig:special-x-bound_b}. All three of them can easily be added to create a (general) simple drawing of $K_{6}$ though.

\begin{figure}[h]
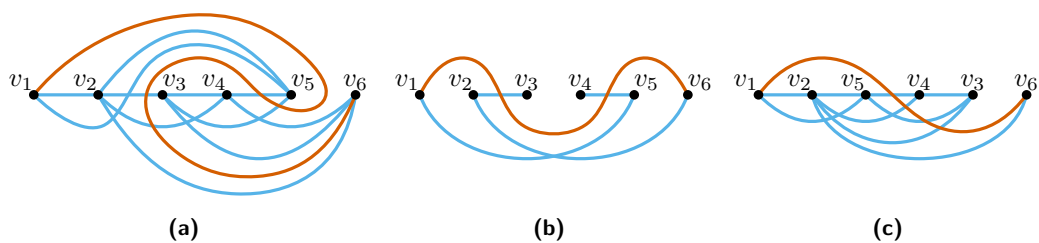

\centering
\subcaptionbox{\centering\label{fig:x-bound-non-complete_a}}[.36\textwidth]{\includegraphics[page=10]{Figures/special_x_bound.pdf}}
\subcaptionbox{\centering\label{fig:x-bound-non-complete_b}}[.31\textwidth]{\includegraphics[page=11]{Figures/special_x_bound.pdf}}
\subcaptionbox{\centering\label{fig:x-bound-non-complete_c}}[.31\textwidth]{\includegraphics[page=12]{Figures/special_x_bound.pdf}}
\caption{\textbf{(a)} An $x$-bounded drawing $\mathcal{D}_{b}$ of a non-complete graph that is not weakly isomorphic to any $x$-monotone drawing; the darkorange edge is $x$-bounded but not $x$-monotone. \textbf{(b)}~An illustration showing that there cannot be any $x$-monotone drawing $\mathcal{D}_{m}$ on the given vertex order being weakly isomorphic to $\mathcal{D}_{b}$. \textbf{(c)}~An $x$-monotone drawing $\mathcal{D}_{m}'$ that is weakly isomorphic to a sub-drawing of $\mathcal{D}_{b}$ but has a different (partial) rotation system.}
\label{fig:x-bound-non-complete}
\end{figure}

We conclude \Cref{sec:x-bound-relate} with an example of a shellable drawing of $K_{11}$ that is not weakly isomorphic to any $x$-bounded drawing. A similar result was already claimed by Balko, Fulek, and Kyn{\v{c}}l~\cite[Theorem~4.9]{bfk-2015-cnccmd} for a shellable drawing $\mathcal{D}_{9}$ of $K_{9}$; see \Cref{fig:n-shell-is-x-mon-kyncl}. However, what they actually proved is that there exists no $x$-monotone drawing that is weakly isomorphic to $\mathcal{D}_{9}$ \emph{and} has the same unbounded cell as $\mathcal{D}_{9}$. In particular, in \Cref{fig:n-shell-is-x-mon-example} we present an $x$-monotone drawing that is weakly isomorphic to $\mathcal{D}_{9}$; the cell corresponding to the original unbounded cell of $\mathcal{D}_{9}$ is shaded orange. Nevertheless, we can extend their example by two vertices to a simple drawing $\mathcal{D}_{11}$ that actually has the claimed property; see \Cref{fig:n-shell-not-x-mon}. In particular, we duplicate the vertices $1$ and $5$ within the original unbounded cell, similar to the construction in \Cref{fig:add-a-vertex}. Clearly $\mathcal{D}_{11}$ is still shellable. Moreover, if we remove the vertices $1$ and $5$ from~$\mathcal{D}_{11}$, we get a drawing $\mathcal{D}_{9}'$ that is weakly isomorphic to $\mathcal{D}_{9}$. Hence, by the result of Balko, Fulek, and Kyn{\v{c}}l, there does not exist any $x$-monotone drawing that is weakly isomorphic to $\mathcal{D}_{11}$ and has its unbounded cell in the union of the unbounded cells of $\mathcal{D}_{9}$ and~$\mathcal{D}_{9}'$; shaded seagreen in \Cref{fig:n-shell-not-x-mon}. On the other hand, this union is exactly the inside of a fish formed by parts of the edges $\{ 1, x \}$, $\{ x, 5 \}$, and $\{ 1', 5' \}$; dash dotted in \Cref{fig:n-shell-not-x-mon}. Similar to c-monotone drawings, the unbounded face in any $x$-monotone drawing has to lie in the inside of every fish. Putting these two arguments together, we conclude that there is no $x$-monotone or, by \Cref{thm:x-bound-is-x-mon}, $x$-bounded drawing that is weakly isomorphic to~$\mathcal{D}_{11}$.

Two more remarks on the above reasoning: First, in the $x$-monotone drawing given in \Cref{fig:n-shell-is-x-mon-example}, the described vertex duplication would destroy $x$-monotonicity because, for example, we would have to place $1'$ inside the orange cell and route the edge $\{ 1', 5 \}$ around vertex $1$. And second, we can repeat the duplication process on $\mathcal{D}_{11}$ to get arbitrarily large shellable drawings that are not weakly isomorphic to any $x$-bounded drawing.

\begin{observation}\label{obs:n-shell-not-x-mon}
For all $n \geq 11$ there exist shellable drawings of $K_{n}$ that are not weakly isomorphic to any $x$-bounded drawing.
\end{observation}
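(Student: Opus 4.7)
The plan is to construct explicit examples by taking the shellable drawing $\mathcal{D}_{9}$ of $K_{9}$ from Balko, Fulek, and Kynčl~\cite{bfk-2015-cnccmd} as a starting seed and carefully augmenting it. First I would recall precisely what Balko et al.\ really show about $\mathcal{D}_{9}$: namely that no $x$-monotone drawing that is weakly isomorphic to $\mathcal{D}_{9}$ can have its unbounded cell coincide with the unbounded cell of $\mathcal{D}_{9}$. (As the previous paragraph points out, $\mathcal{D}_{9}$ itself does admit a weakly isomorphic $x$-monotone representation with a different outer face, so this cell-restriction is essential.) Combined with \Cref{thm:x-bound-is-x-mon}, this gives the same statement with \enquote{$x$-monotone} replaced by \enquote{$x$-bounded}.

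Next I would build $\mathcal{D}_{11}$ by duplicating two suitable vertices of $\mathcal{D}_{9}$, here the vertices labeled $1$ and $5$, placing the copies $1'$ and $5'$ inside the original unbounded cell of $\mathcal{D}_{9}$ and routing their incident edges in the spirit of the construction from \Cref{fig:add-a-vertex}. This duplication is done so that (i) $\mathcal{D}_{11}$ remains simple and shellable, with the new unbounded cell lying in the same region as the old one, and (ii) deleting $\{1,5\}$ from $\mathcal{D}_{11}$ yields a drawing $\mathcal{D}_{9}'$ weakly isomorphic to $\mathcal{D}_{9}$. Shellability is easy to see because the original shelling order of $\mathcal{D}_{9}$ can be extended by inserting $1'$ and $5'$ next to $1$ and $5$ in the appropriate positions.

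The core of the argument is the impossibility part. Suppose for contradiction that $\mathcal{D}_{11}$ is weakly isomorphic to an $x$-bounded, hence $x$-monotone, drawing $\mathcal{D}_{11}^{\ast}$. Deleting $1,5$ (respectively $1',5'$) from $\mathcal{D}_{11}^{\ast}$ leaves an $x$-monotone drawing of $K_{9}$ weakly isomorphic to $\mathcal{D}_{9}$. Applying the Balko--Fulek--Kynčl result to each of these two deletions forces the unbounded face of $\mathcal{D}_{11}^{\ast}$ to avoid both the region that corresponds to the unbounded cell of $\mathcal{D}_{9}$ and the region that corresponds to the unbounded cell of $\mathcal{D}_{9}'$. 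The crucial geometric observation, which is the real obstacle I expect, is that the union of these two forbidden regions is precisely the inside of a fish formed by suitable sub-arcs of the edges $\{1,x\}$, $\{x,5\}$, and $\{1',5'\}$ (for a suitable vertex $x$ common to $\mathcal{D}_{9}$ and $\mathcal{D}_{9}'$). Verifying this \emph{requires a careful case check on the drawing} and is the technical heart of the proof; once established, a standard fish argument (analogous to the one used for c-monotone drawings in \Cref{sec:all-pair-proofs}) says that in any $x$-monotone representation the unbounded face must lie inside every fish, which contradicts the previous conclusion.

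Finally, to reach all $n \geq 11$, I would iterate the duplication: starting from $\mathcal{D}_{11}$, repeatedly duplicate a vertex inside the current unbounded cell. Each iteration preserves shellability and, by the same deletion-and-fish argument applied to the two most recently added copies, the resulting drawing still admits no $x$-bounded representation. This yields an infinite family and establishes the observation.
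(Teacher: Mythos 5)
Your proposal follows essentially the same route as the paper: start from the Balko--Fulek--Kyn\v{c}l drawing $\mathcal{D}_{9}$ and the correct (cell-restricted) reading of their result, duplicate vertices $1$ and $5$ inside the unbounded cell to get a shellable $\mathcal{D}_{11}$, rule out both candidate unbounded cells via the two deletions, observe that their union is the inside of a fish on $\{1,x\}$, $\{x,5\}$, $\{1',5'\}$, and invoke the fact that the unbounded face of an $x$-monotone drawing must lie inside every fish, then iterate and apply \Cref{thm:x-bound-is-x-mon}. The only minor difference is in the iteration step, where the paper simply repeats the duplication (and one could equally note that any induced sub-drawing of an $x$-monotone drawing is $x$-monotone, so containing $\mathcal{D}_{11}$ already suffices); this does not affect correctness.
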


\begin{figure}[h]
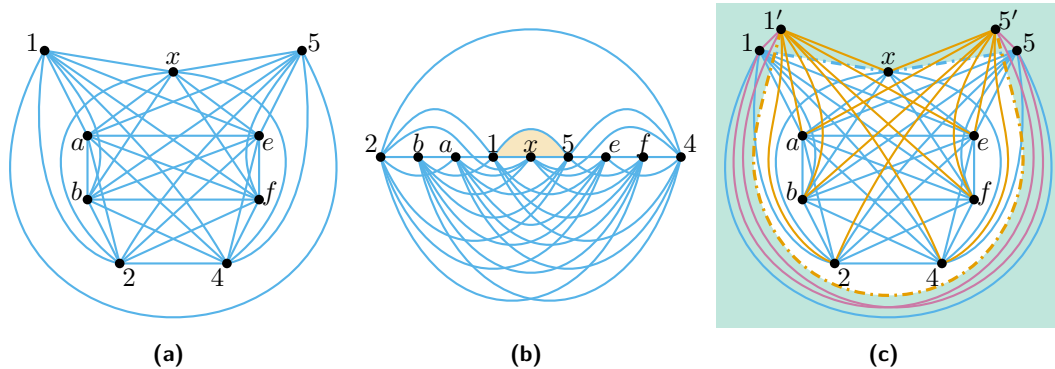

\vspace{-0.5\baselineskip}
\centering
\subcaptionbox{\centering\label{fig:n-shell-is-x-mon-kyncl}}[.328\textwidth]{\includegraphics[page=14]{Figures/special_x_bound.pdf}}
\subcaptionbox{\centering\label{fig:n-shell-is-x-mon-example}}[.328\textwidth]{\includegraphics[page=13]{Figures/special_x_bound.pdf}}
\subcaptionbox{\centering\label{fig:n-shell-not-x-mon}}[.328\textwidth]{\includegraphics[page=15]{Figures/special_x_bound.pdf}}
\caption{\textbf{(a)}~The original shellable drawing $\mathcal{D}_{9}$ from Balko, Fulek, and Kyn{\v{c}}l~\cite[Figure~21]{bfk-2015-cnccmd} with the vertices labeled as in their paper. \textbf{(b)}~An $x$-monotone drawing that is weakly isomorphic to~$\mathcal{D}_{9}$. \textbf{(c)}~The extended shellable drawing $\mathcal{D}_{11}$, which is not weakly isomorphic to any $x$-bounded drawing anymore. The new edges are orange if they are part of $\mathcal{D}_{9}'$ and purple otherwise.}
\label{fig:n-shell-examples}
\vspace{-0.5\baselineskip}
\end{figure}

\subsection{Cylindrical drawings and c-monotone drawings}\label{sec:cylindrical-relate}

Finally, we turn our attention to the different types of cylindrical drawings. Specifically, we will show that every cylindrical drawing is weakly isomorphic to a c-monotone drawing and that every strongly cylindrical drawing is weakly isomorphic to a strongly c-monotone drawing. Let us emphasize that all results in this section hold for drawings of arbitrary graphs, not just complete graphs. Our first step is to prove that we can redraw a cylindrical drawing $\mathcal{D}$, without changing any crossing properties, such that all edges are c-monotone with respect to the common center~$O$ of the two circles. To this end we define, similar to the winding number of closed curves in complex analysis, the \emph{continuous winding number} $\omega_{e}$ of an edge $e$ as the overall portion of times, as a real number, that $e$ completely travels around $O$ in counter-clockwise direction. To fix the sign of $\omega_{e}$, we orient all lateral edges of $\mathcal{D}$ from the outer to the inner circle and we give all circle edges an arbitrary but fixed orientation. A~negative value of $\omega_{e}$ then means that $e$ travels in clockwise direction around~$O$. To ensure that $\omega_{e}$ is well-defined for all edges $e$, we can assume, without loss of generality, that no inner circle edge actually passes through~$O$. We can give a bound on~$\omega_{e}$, simultaneously for every edge $e$ in $\mathcal{D}$.

\begin{lemma}\label{lem:cylindrical-winding}
Let $\mathcal{D}$ be a cylindrical drawing. Then there exists a cylindrical drawing $\mathcal{D}'$ that is strongly isomorphic to $\mathcal{D}$ and has $\lvert \omega_{e} \rvert < 1$ for every edge $e$ in $\mathcal{D}'$.
\end{lemma}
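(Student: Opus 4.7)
My plan is to prove the lemma by induction on the \emph{excess winding}
\[
W(\mathcal{D}) \;:=\; \sum_{e \in E(\mathcal{D})} \max\bigl(0,\, \lfloor |\omega_e| \rfloor\bigr),
\]
so that the desired conclusion $|\omega_e|<1$ for every $e$ is precisely the statement $W(\mathcal{D})=0$. The base case is immediate, and for the inductive step I will construct a strongly isomorphic cylindrical drawing $\mathcal{D}''$ with $W(\mathcal{D}'') < W(\mathcal{D})$ and iterate.

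For the reduction step I would pick an edge $e$ with $|\omega_e| \geq 1$; without loss of generality $\omega_e \geq 1$ (the other sign is symmetric). Since the lifted angle along $e$ increases by at least $2\pi$, the edge $e$ contains a sub-arc $e'$ whose two endpoints lie on a common radial ray $\rho$ from $O$ and which traverses exactly one full counter-clockwise loop around $O$. I~would choose $e'$ to be \emph{innermost}, so that the simple closed curve $\gamma := e' \cup (\text{segment of }\rho)$ bounds a disk region around $O$ that is minimal; after a tiny perturbation we may assume $\gamma$ is disjoint from every vertex and from the two concentric circles.

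Next, I apply a Dehn twist $\tau$ supported in a thin annular neighborhood of $\gamma$ that rotates its interior by $-2\pi$ relative to its exterior. The map $\tau$ is an orientation-preserving self-homeomorphism of the sphere fixing every vertex, so $\tau(\mathcal{D})$ is strongly isomorphic to $\mathcal{D}$; and since the support of $\tau$ misses both circles, $\tau(\mathcal{D})$ remains cylindrical with the same circles. By construction, $\omega_{\tau(e)} = \omega_e - 1$, which strictly decreases $\lfloor |\omega_e| \rfloor$.

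The main obstacle will be to show that this Dehn twist does not inflate $W$ overall. The winding of an edge $f \neq e$ changes by minus the net signed intersection number of $f$ with $\gamma$. Because $\gamma$ was chosen innermost and $\mathcal{D}$ is simple, I expect each intersection of a non-incident edge $f$ with $\gamma$ to be coherently oriented with $e$'s loop, forcing this signed count to be non-negative and hence $|\omega_f|$ to only decrease (or to stay within the same integer interval). Verifying this coherence claim, which leverages the structure theory of cylindrical drawings (in particular \Cref{obs:cylindrical-crossings,obs:guarding-edges}), is the main technical step I expect to require the most care.
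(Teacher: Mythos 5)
There is a genuine gap, and it sits exactly where you locate your ``main obstacle''. Your curve $\gamma$ is a simple closed curve lying in the lateral face that winds once around $O$, so it is isotopic to the core of the lateral annulus, and a Dehn twist about it acts on winding numbers in a completely rigid way: every lateral edge runs from the outer circle to the inner circle, hence meets $\gamma$ with net algebraic intersection number exactly $\pm 1$, and its winding changes by exactly $-1$; every circle edge has both endpoints on the same side of $\gamma$, so its winding is unchanged (and circle edges satisfy $\lvert\omega_e\rvert<1$ automatically, since otherwise they would self-intersect). No choice of ``innermost'' $\gamma$ can make the operation act selectively on $e$: the mapping class group of the annulus rel boundary is generated by this single twist, so any homeomorphism fixing both circles pointwise translates \emph{all} lateral windings by one and the same integer. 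Consequently the coherence you hope to verify fails in the opposite direction: a lateral edge $f$ with $\omega_f\in(-1,0)$ contributes $0$ to $W$ before the twist but is pushed to $\omega_f-1\in(-2,-1)$ and contributes $1$ afterwards, so $W$ need not decrease and your induction does not close. The ingredient that rescues the argument is the paper's global bound: any two lateral edges have winding numbers differing by less than $2$, since otherwise they would have to cross at least twice, so the lateral windings occupy an interval of length strictly less than $2$.

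Even granting that bound, your operation is still too coarse, because it is quantized to integer shifts: an interval of windings such as $[-0.9,\,1.05]$ has length $1.95<2$ but cannot be translated into the open interval $(-1,1)$ by any integer. The paper therefore performs a \emph{fractional} twist --- a homeomorphism that rotates the outer circle by an arbitrary angle --- which translates all lateral windings by an arbitrary real number and brings them into $(-1,1)$ in a single step. If you replace your iterated integer Dehn twists by one such rotation and add the observation that the spread of lateral windings is less than $2$, your argument collapses to the paper's proof; as written, it does not go through.
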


\begin{proof}
Observe that $\lvert \omega_{e} \rvert < 1$ holds anyway for every circle edge $e$ because otherwise $e$ would have to cross itself. Let further $e_{0}$ and $e_{1}$ be two lateral edges for which $\omega_{e_{0}} = \min_{e \in E} (\omega_{e})$ and $\omega_{e_{1}} = \max_{e \in E} (\omega_{e})$ holds, where $E$ is the set of all lateral edges in $\mathcal{D}$. Observe that $\omega_{e_{1}} - \omega_{e_{0}} < 2$ because otherwise $e_{0}$ and $e_{1}$ would have to cross each other at least twice.

So there exists a homeomorphism of the plane, rotating the outer circle appropriately, that transforms $\mathcal{D}$ into a cylindrical drawing $\mathcal{D}'$ with $\lvert \omega_{e} \rvert < 1$ for every edge $e$ in $\mathcal{D}'$.
\end{proof}

In other words, we can transform every cylindrical drawing $\mathcal{D}$ such that every edge $e$ of $\mathcal{D}$ travels less than one round around $O$, which is a necessary condition for $e$ to be c-monotone with respect to $O$. To further transform $\mathcal{D}$ into a c-monotone drawing, it basically remains to \enquote{stretch} all its edges.

\begin{proposition}\label{prop:cylindrical-is-c-mon}
Let $\mathcal{D}$ be a cylindrical drawing. Then there exists a c-monotone drawing~$\mathcal{D}'$ that is weakly isomorphic to $\mathcal{D}$.
\end{proposition}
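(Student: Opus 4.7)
The plan is to use Lemma~\ref{lem:cylindrical-winding} to pass to a representation of~$\mathcal{D}$ in which every edge $e$ satisfies $\lvert\omega_e\rvert<1$, and then to radially straighten each edge so that its angular coordinate around the common circle center~$O$ becomes strictly monotone. This directly produces a c-monotone drawing $\mathcal{D}'$; the main work is to verify that $\mathcal{D}'$ is weakly isomorphic to $\mathcal{D}$.

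Concretely, I~would work in polar coordinates $(r,\theta)$ with origin $O$ and parameterize each edge $e$ as $(r_e(t),\theta_e(t))$ for $t\in[0,1]$. Because $\lvert\omega_e\rvert<1$, the angular coordinate admits a continuous lift $\tilde\theta_e\colon[0,1]\to\mathbb{R}$ with $\lvert\tilde\theta_e(1)-\tilde\theta_e(0)\rvert<2\pi$, and I~would define the replacement edge $\hat{e}$ to have the unchanged radial function $r_e(t)$ and the affine angular coordinate $\hat\theta_e(t)=(1-t)\tilde\theta_e(0)+t\tilde\theta_e(1)$. The resulting drawing $\mathcal{D}'$ keeps all vertices in place and, by construction, has every edge strictly monotone in the angular direction around~$O$, so $\mathcal{D}'$ is c-monotone with respect to~$O$.

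To establish the weak isomorphism between $\mathcal{D}$ and $\mathcal{D}'$, I~would use the straight-line homotopy $\theta_e^{s}(t)=(1-s)\tilde\theta_e(t)+s\hat\theta_e(t)$ for $s\in[0,1]$, yielding a continuous family of drawings $\mathcal{D}_s$ from $\mathcal{D}_0=\mathcal{D}$ to $\mathcal{D}_1=\mathcal{D}'$. Throughout the deformation, vertex positions and radial parameterizations are fixed, so no edge can sweep across a vertex or across~$O$, and each intermediate drawing~$\mathcal{D}_s$ is a well-defined drawing of the same graph. Crossings of a pair of non-incident edges $e,f$ in $\mathcal{D}_s$ correspond to pairs $(t_1,t_2)$ with $r_e(t_1)=r_f(t_2)$ and $\theta_e^{s}(t_1)=\theta_f^{s}(t_2)$, and I~would argue that the cardinality of this set is a constant (either $0$ or~$1$) in $s$ for each pair.

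The hard part will be ruling out that during the homotopy a pair of edges briefly gains or loses crossings of Reidemeister-II type. I~expect this to need a careful case analysis distinguishing lateral edges from inner and outer circle edges, using the simplicity of~$\mathcal{D}$ and the bound $\lvert\omega_e\rvert<1$ to obtain the necessary rigidity; a small generic perturbation of~$\hat\theta_e$ may also be needed to avoid non-transverse events. Extra care is required for circle edges lying in the inner or outer face, whose radial parameterization $r_e$ may oscillate rather than be monotone, and whose straightening must still respect the c-monotonicity requirement with respect to~$O$.
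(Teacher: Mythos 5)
Your opening moves (normalizing winding numbers via \Cref{lem:cylindrical-winding}, then straightening angularly) match the paper's starting point, but the specific straightening you propose is not merely unverified at the step you defer --- it is actually false there. Keeping the radial function $r_e(t)$ while making the angle affine in the \emph{original parameter} $t$ redistributes the radial profile of $e$ over its angular span according to how the original curve happened to be parameterized, and this can create new crossings. Concretely (polar coordinates, inner radius $1$, outer radius $3$): let the lateral edge $e$ run from $(3,0)$ to $(1,1)$ by oscillating radially $3\to1.2\to2.8\to1.1$ inside the thin wedge of angles $[0,0.01]$ during $t\in[0,0.9]$, then sweeping at radius $1.1$ to angle $1$ and dropping to radius $1$; let the lateral edge $f$ run from $(3,0.5)$ to $(1,1.1)$ by descending at angle $0.5$ to radius $1.15$, sweeping at radius $1.15$ to angle $1.1$, and descending to radius $1$. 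These two arcs are disjoint. After your straightening, $e$'s radius as a function of angle performs the excursion $3\to1.2\to2.8\to1.1\to1$ spread over $[0,1]$, while $f$'s is $3\to1.15\to1$ spread over $[0.5,1.1]$; on the common angular domain $[0.5,1]$ the difference $r_e-r_f$ changes sign twice (it is negative near $0.5$, positive near $0.6$, negative near $0.9$), so the straightened edges cross twice. Your $\mathcal{D}'$ is then not even simple, let alone weakly isomorphic to $\mathcal{D}$. The homotopy argument cannot rescue this: with endpoints fixed it preserves only the \emph{parity} of the number of crossings of each pair, which is perfectly consistent with the Reidemeister-II event that occurs here, and no generic perturbation removes two transverse crossings.

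The paper avoids this trap by never reusing the radial profiles. It first shows that in a cylindrical drawing every crossing is determined by purely combinatorial data: crossings among circle edges on one circle by \Cref{obs:cylindrical-2-page,obs:2-page-crossings}, crossings between circle edges and lateral edges by \Cref{obs:cylindrical-crossings}, and crossings between two lateral edges by the condition $0\le\delta+\omega_f-\omega_e\le1$ on their winding numbers. It then redraws all edges c-monotonically from scratch, preserving only the vertex positions, the face of each edge, and the values $\omega_e$, and concludes weak isomorphism from the characterization. To repair your argument you would need either this combinatorial characterization or an additional normalization of the radial profiles (e.g.\ forcing lateral edges to be monotone in $r$ as well as in $\theta$, and circle edges to stay near their circle), at which point you are essentially redrawing from scratch as the paper does.
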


\begin{proof}
We argue that all crossings in $\mathcal{D}$ are combinatorially determined by the position of the vertices on the two circles, by the choice of whether an edge lies in the inner, lateral, or outer face, and by the direction an edge travels around $O$.

By \Cref{obs:cylindrical-2-page} the sub-drawings induced by each of the two circles are weakly isomorphic to $2$-page-book drawings. So the crossings between two circle edges on the same circle are determined by \Cref{obs:2-page-crossings}. Further, the crossings between circle edges and lateral edges are determined by \Cref{obs:cylindrical-crossings}. This observation also implies that there are no crossings between two circle edges from different circles. Finally, two lateral edges $e$ and $f$ do \emph{not} cross if and only if $0 \leq \delta + \omega_{f} - \omega_{e} \leq 1$, where $\delta$ is the fraction of the outer circle from the end-vertex of $e$ to the end-vertex of $f$ in counter-clockwise direction.

Since we can also assume, by \Cref{lem:cylindrical-winding}, that $\lvert \omega_{e} \rvert < 1$ holds for every edge $e$ in $\mathcal{D}$, we can easily redraw all edges such that they are c-monotone with respect to $O$ and without changing, for any edge $e$, its $\omega_{e}$ value, the positions of its end-vertices, or the face in which $e$ is drawn. By the above arguments, this results in a c-monotone drawing $\mathcal{D}'$ that is weakly isomorphic to $\mathcal{D}$.
\end{proof}

To show that every strongly cylindrical drawing $\mathcal{D}$ is weakly isomorphic to a strongly c-monotone drawing, we first focus on the lateral edges and assume, by \Cref{prop:cylindrical-is-c-mon}, that $\mathcal{D}$ is c-monotone. Especially, we use the notion of \emph{wedges} and \emph{covering the plane} from c-monotone drawings also for $\mathcal{D}$. Note that two incident lateral edges can never cover the plane because they otherwise would have to cross each other. However, two non-incident lateral edges $e$ and $f$ might cover the plane. In that case the signs of $\omega_{e}$ and $\omega_{f}$ must be the same though because otherwise $e$ and $f$ would cross each other twice. We call such a pair of lateral edges with negative signs a \emph{clockwise double-spiral} and with positive signs a \emph{counter-clockwise double-spiral}; see \Cref{fig:double-spirals_a} for an example.

\begin{lemma}\label{lem:double-spirals}
Let $\mathcal{D}$ be a cylindrical drawing. Then there exists another cylindrical drawing $\mathcal{D}'$ that is weakly isomorphic to $\mathcal{D}$ and contains no double-spirals.
\end{lemma}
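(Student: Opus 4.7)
The plan is to deform $\mathcal{D}$ via a one-parameter family of plane homeomorphisms, each preserving the weak isomorphism class, and to show that some parameter value produces a drawing without double-spirals. I start by applying \Cref{lem:cylindrical-winding} to assume that $|\omega_e| < 1$ for every edge of $\mathcal{D}$, so that every wedge is a proper subset of the plane.

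For each real $\theta$, I define a homeomorphism $\Phi_\theta$ of the plane that is the identity on the closed inner disk, rigidly rotates the closed outer region by $2\pi\theta$ around $O$, and interpolates between these two rigid motions via a radial shear on the lateral annulus. Since $\Phi_\theta$ is a plane homeomorphism, $\Phi_\theta(\mathcal{D})$ is strongly (and hence weakly) isomorphic to $\mathcal{D}$; concretely, the transformation shifts the winding number of every lateral edge (and every circle edge drawn in the lateral face) by the same constant $-\theta$.

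Tracking when double-spirals can occur, a pair of non-incident lateral edges $e, f$ forms a clockwise double-spiral in $\Phi_\theta(\mathcal{D})$ only if both $\omega_e - \theta$ and $\omega_f - \theta$ are negative with $|\omega_e - \theta| + |\omega_f - \theta| \geq 1$, which rewrites as $\theta \geq (\omega_e + \omega_f + 1)/2$; a counter-clockwise double-spiral requires $\theta \leq (\omega_e + \omega_f - 1)/2$. Each pair therefore rules out at most one upper half-line and one lower half-line of $\theta$-values, and the core task is to show that the union of all these forbidden half-lines does not cover $\mathbb{R}$.

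The main obstacle is this last step. The bound $\omega_{\max} - \omega_{\min} < 2$ from \Cref{lem:cylindrical-winding} already yields a gap whenever the range of lateral winding numbers is strictly less than $1$, since every clockwise forbidden half-line starts at $\theta \geq \omega_{\min} + 1/2$ and every counter-clockwise one ends at $\theta \leq \omega_{\max} - 1/2$. The remaining case, when the range lies in $[1, 2)$, requires a finer geometric argument: the actual covering condition forces the outer and inner endpoints of $e$ and $f$ to interleave in a specific pattern (the two wedges must overlap at both of their complementary arcs), and I would argue that this pattern cannot be realized simultaneously by a clockwise pair with extremal $\omega_e + \omega_f$ and a counter-clockwise pair with extremal $\omega_e + \omega_f$. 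Once a valid $\theta$ is found, composing with an integer Dehn twist of the lateral annulus — a homeomorphism that shifts every lateral winding number by the same integer — if necessary, restores $|\omega| < 1$ for every edge without reintroducing any double-spirals.
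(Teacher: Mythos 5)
Your approach --- a single global twist $\Phi_\theta$ of the outer region relative to the inner, followed by choosing one good $\theta$ --- is genuinely different from the paper's proof, but it cannot work, and the obstruction is exhibited in the paper itself. The key unproved step is that the forbidden half-lines do not cover $\mathbb{R}$, and this is false: a cylindrical drawing can contain a clockwise and a counter-clockwise double-spiral \emph{simultaneously} (\Cref{fig:double-spirals_c} shows such a drawing of $K_{8}$). For such a drawing, take the clockwise pair $(e,f)$ at $\theta=0$: increasing $\theta$ makes $\omega_{e}-\theta$ and $\omega_{f}-\theta$ more negative and only \emph{enlarges} both wedges (each wedge is the arc from the fixed inner endpoint to the rotating outer endpoint, and that arc grows monotonically with $\theta$), so $(e,f)$ remains a clockwise double-spiral for every $\theta\geq 0$; symmetrically, the counter-clockwise pair persists for every $\theta\leq 0$. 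Hence the two forbidden half-lines already cover all of $\mathbb{R}$, and composing with an integer Dehn twist --- which is just $\Phi_{k}$ for integer $k$ --- cannot help. Your proposed ``finer geometric argument'' for the case where the winding-number range lies in $[1,2)$ is exactly the case that occurs here (a clockwise pair forces $\omega_{e}+\omega_{f}\leq -1$ and a counter-clockwise pair forces $\omega_{g}+\omega_{h}\geq 1$), and the claimed impossibility of realizing both patterns at once is refuted by that example.

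The underlying reason a uniform rotation is too weak is that it shifts \emph{every} lateral winding number by the same amount, whereas removing a clockwise double-spiral requires unwinding in one direction and removing a counter-clockwise one requires unwinding in the other. The paper's proof instead performs \emph{non-uniform} moves: it slides individual vertices along the inner circle (preserving their cyclic order, hence weak isomorphism), pulling one endpoint of one spiraling edge out of the other edge's wedge, and then argues carefully --- using simplicity and the absence of doubly-crossing pairs --- that each such move destroys one double-spiral without creating any new one, so the process terminates. If you want to salvage your idea, you would need to replace the one-parameter family of rigid rotations by per-vertex angular displacements, at which point you are essentially reconstructing the paper's argument.
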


\begin{proof}
We will remove one double-spiral at a time by moving around vertices on the inner circle, via a homeomorphism of the plane, without changing their order on the circle. By \Cref{prop:cylindrical-is-c-mon} and its proof, we can assume that the initial drawing $\mathcal{D}$ is c-monotone and we can make each drawing in the process, that results from moving some vertices, again c-monotone without changing the vertex positions.

Let $e = \{ v_{a}, v_{b} \}$ and $f = \{ v_{c}, v_{d} \}$ form a clockwise double-spiral with $v_{a}$ and $v_{c}$ on the outer circle; see \Cref{fig:double-spirals_b} for an illustration. Let further $v_{a}'$ be the neighboring vertex of~$v_{a}$ on the outer circle in counter-clockwise direction. Then we move vertex~$v_{d}$ in counter-clockwise direction on the inner circle out of the wedge $\Lambda_{e}$ and into the wedge $A$ between $v_{a}$ and $v_{a}'$. This removes the double-spiral formed by $e$ and $f$. To keep the circular order of vertices and therefore weak isomorphism, we also move each vertex between the old and the new position of $v_{d}$ on the inner circle in counter-clockwise direction into the wedge~$A$.

\begin{figure}[h]
\centering
\subcaptionbox{\centering\label{fig:double-spirals_a}}[.328\textwidth]{\includegraphics[page=1]{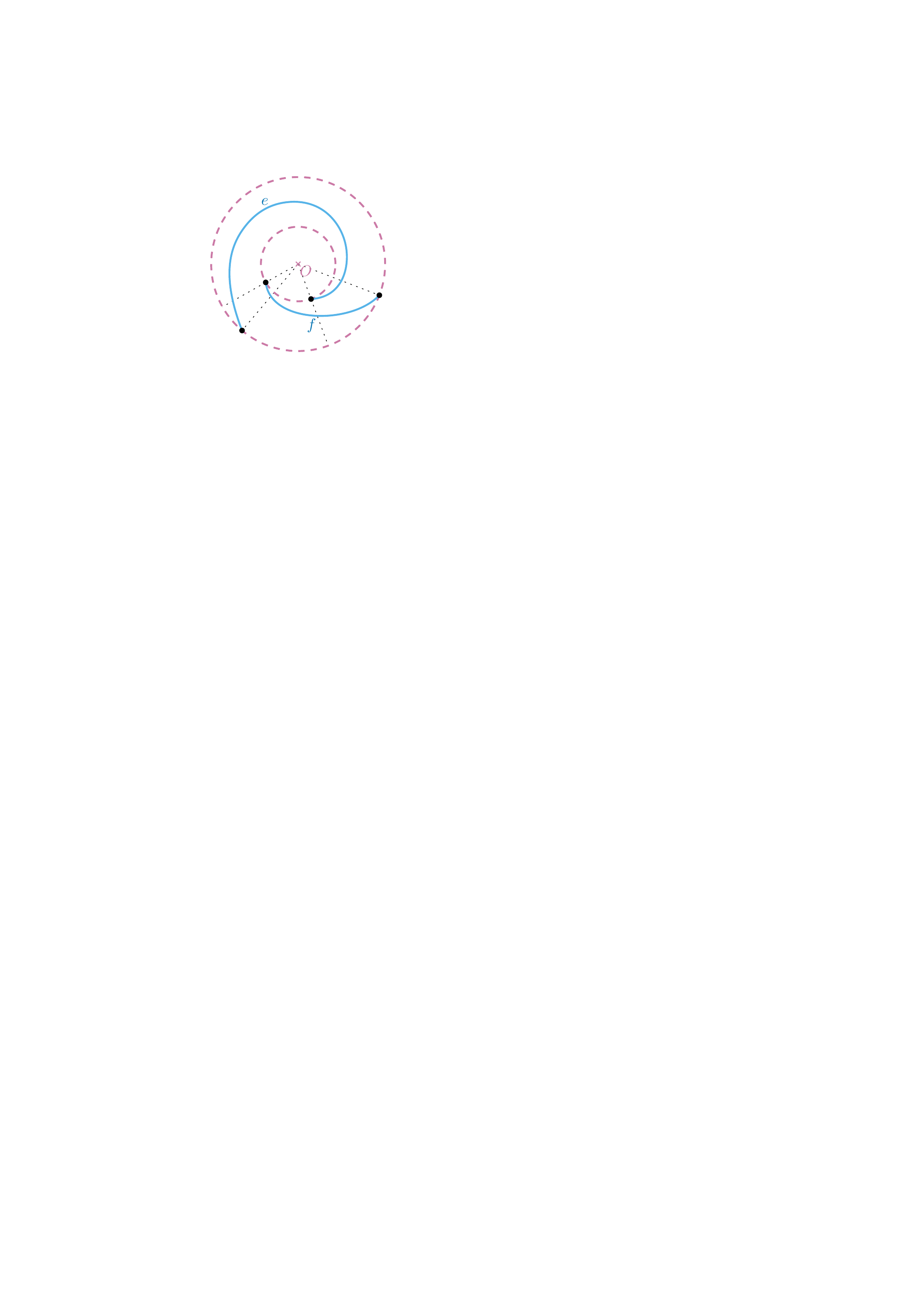}}
\subcaptionbox{\centering\label{fig:double-spirals_b}}[.328\textwidth]{\includegraphics[page=2]{Figures/special_cylindrical.pdf}}
\subcaptionbox{\centering\label{fig:double-spirals_c}}[.328\textwidth]{\includegraphics[page=3]{Figures/special_cylindrical.pdf}}
\caption{\textbf{(a)}~Two edges $e$ and $f$ forming a clockwise double-spiral. \textbf{(b)}~Moving vertex $v_{d}$, and potentially other vertices~$v_{d}'$, to resolve the clockwise double-spiral formed by $e$ and $f$. This cannot create any new counter-clockwise double-spirals. \textbf{(c)}~A clockwise (yellow) and a counter-clockwise (darkorange) double-spiral in a cylindrical drawing of $K_{8}$; circle edges are omitted for convenience.}
\label{fig:double-spirals}
\end{figure}

It remains to show that no new double-spirals are created in the process. Since we only move vertices on the inner circle in counter-clockwise direction, we cannot create any new clockwise double-spirals in the process. Moreover, every new counter-clockwise double-spiral must involve a vertex $v_{d}'$ that is moved. So let $f' = \{ v_{c}', v_{d}' \}$ be an edge with $\omega_{f'} > 0$ after~$v_{d}'$ is moved. By simplicity with the edge $f$, $v_{c}'$ has to lie between $v_{c}$ and $v_{a}$, $v_{c}$ excluded, in counter-clockwise direction on the outer circle. Hence there exist wedges $A_{1}$ between $v_{d}'$ and $v_{a}'$ and $A_{2}$ between $v_{c}$ and $v_{c}'$ that lie in the closed complement of $\Lambda_{f'}$ and have disjoint interiors. In particular, every lateral edge $e'$ with $\omega_{e'} > 0$ passing through $A_{1}$ in counter-clockwise direction, to potentially form a double-spiral with $f'$, must start on the outer circle after $v_{c}'$ but at latest at vertex~$v_{a}$ and, to keep simplicity with the edge $e$, end before vertex $v_{b}$ on the inner circle. Since $v_{c}$ lies in $\Lambda_{e}$ and, therefore, between $v_{b}$ and $v_{c}'$ in counter-clockwise direction around $O$, $e'$~cannot pass through $A_{2}$. Consequently, $f'$ cannot be part of any counter-clockwise double-spiral.

We proceed similarly to remove counter-clockwise double-spirals, moving vertices on the inner circle in clockwise direction. In each of those steps, we reduce the total number of double-spirals by at least one. So after finitely many steps we reach a weakly isomorphic cylindrical drawing $\mathcal{D}'$ without double-spirals.
\end{proof}

We actually could have formulated \Cref{lem:double-spirals} using the \emph{strong} isomorphism. The \emph{weak} isomorphism is only due to keeping the drawing c-monotone throughout the proof, which we do for convenience because it allows for a nice definition of double-spirals. Let us also remark that even a cylindrical drawing of $K_{n}$ can contain a clockwise and a counter-clockwise double-spiral at the same time; see \Cref{fig:double-spirals_c}.

Recall for the final proof of this paper that in a strongly cylindrical drawing, by definition, all circle edges must lie in the inner or outer face. Furthermore, we say that a circle edge $e$ \emph{covers another circle edge} $e'$, on the same or the other circle, if both end-vertices of $e'$ lie in~$\Lambda_{e}$, that is, $e$ and $e'$ together could potentially cover the plane, depending on~$\Lambda_{e'}$.

\begin{theorem}\label{thm:strong-cylindrical-is-c-mon}
Let $\mathcal{D}$ be a strongly cylindrical drawing. Then there exists a strongly c-monotone drawing $\mathcal{D}'$ that is weakly isomorphic to $\mathcal{D}$.
\end{theorem}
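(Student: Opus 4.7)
The plan is to start from the strongly cylindrical drawing $\mathcal{D}$ and apply the two reductions already developed in this section. First, by \Cref{prop:cylindrical-is-c-mon}, replace $\mathcal{D}$ with a weakly isomorphic c-monotone drawing, and then, by \Cref{lem:double-spirals}, refine it to a c-monotone drawing $\mathcal{D}_2$ containing no double-spirals. Thus in~$\mathcal{D}_2$ no pair of lateral edges covers the plane, which disposes of one of the three cases needed for strong c-monotonicity.

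The next step exploits the defining feature of strong cylindricity: in $\mathcal{D}$, inner (respectively outer) circle edges lie in the inner (respectively outer) face and therefore never cross any lateral edge, while crossings between two circle edges on the same circle are purely combinatorial, determined by linking along the circle via \Cref{obs:2-page-crossings} together with \Cref{obs:cylindrical-2-page}. I would use this freedom to independently redraw every circle edge of $\mathcal{D}_2$ as a ``chord-like'' c-monotone curve within its own face, so that its wedge is the smaller of the two wedges determined by its end-vertices. Since chords on each circle realise exactly the 2-page-book crossing pattern of that circle, this preserves weak isomorphism, and every circle edge in the resulting drawing $\mathcal{D}_3$ then has wedge angle strictly less than~$\pi$.

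It remains to verify that $\mathcal{D}_3$, possibly after one more round of refinements, is strongly c-monotone, i.e., no pair of edges covers the plane. Two circle edges have wedge angles summing to less than~$2\pi$, so their union cannot be the whole plane, and two lateral edges are ruled out by the absence of double-spirals. The main obstacle is the mixed case, where a lateral edge~$e$ with wedge angle exceeding~$\pi$ pairs with a chord-like circle edge~$f$ whose small wedge covers the short arc cut out by the end-vertices of~$e$. To eliminate such pairs, I would extend the vertex-moving technique from the proof of \Cref{lem:double-spirals}: the same clockwise and counter-clockwise sweeps of inner-circle vertices that resolve double-spirals also individually shrink any lateral wedge that is no longer stabilised by a matching partner on the opposite spiral side, and by the absence of double-spirals no such partner is present. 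A finite number of these additional moves brings every lateral wedge below~$\pi$, after which no pair of edges can cover the plane, yielding the desired strongly c-monotone drawing~$\mathcal{D}'$ weakly isomorphic to~$\mathcal{D}$.
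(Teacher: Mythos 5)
Your opening matches the paper exactly: reduce to a c-monotone drawing via \Cref{prop:cylindrical-is-c-mon}, remove double-spirals via \Cref{lem:double-spirals}, observe that the lateral sub-drawing is then strongly c-monotone, and note that circle edges only interact with each other combinatorially through \Cref{obs:2-page-crossings}. You also correctly identify the crux, namely the mixed case of a lateral edge and a circle edge covering the plane. But your resolution of that case has a genuine gap. The claim that further vertex-moving sweeps can bring \emph{every} lateral wedge below $\pi$ is asserted, not proved: moving an inner vertex changes the wedges of all lateral edges incident to it at once, there is no termination or potential argument, and --- more fundamentally --- the target configuration need not exist. The angular extent of a lateral edge's wedge is forced from below by the crossings it must realize (two lateral edges can only cross inside the intersection of their wedges), so a lateral edge whose prescribed crossing partners sit at angular positions spread over more than half the circle cannot have wedge angle below $\pi$ in any weakly isomorphic c-monotone drawing with compatible vertex positions. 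Absence of double-spirals only guarantees that no \emph{pair} of lateral edges covers the plane; it does not bound individual wedges, and strong c-monotonicity does not require such a bound.

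The paper resolves the mixed case in the opposite direction: it leaves the lateral edges untouched (large wedges and all) and instead chooses, for each circle edge $e$, which of its two possible wedges to use. The key lemma-level observation is that if one direction of $e$ is excluded by a lateral edge $f$ and the other by a lateral edge $f'$, then $f$ and $f'$ would have to cross twice or form a double-spiral --- so after \Cref{lem:double-spirals} every circle edge has at least one admissible direction. A second argument shows these forced choices are mutually consistent (a forcing lateral edge also forces all circle edges covered by $e$ the same way), and the circle edges with both directions free are all routed so as to avoid one common ray from $O$. Note that this sometimes assigns a circle edge its \emph{larger} wedge, which is precisely what your ``always take the chord-like small wedge'' step forbids; insisting on small wedges for circle edges is what pushes all the difficulty onto the lateral edges, where it cannot be absorbed.
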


\begin{proof}
We can assume, by \Cref{prop:cylindrical-is-c-mon,lem:double-spirals}, that $\mathcal{D}$ is c-monotone and contains no double-spirals. Hence the sub-drawing $\mathcal{D}_{\ell}$ of lateral edges is already strongly c-monotone. We construct $\mathcal{D}'$ from $\mathcal{D}_{\ell}$ by suitably adding all circle edges to it from scratch. Since the face for each circle edge is fixed by being strongly cylindrical and the vertex positions are fixed by~$\mathcal{D}_{\ell}$, all crossings are already determined up to parity. The remaining task is to determine~$\Lambda_{e}$, for each circle edge $e$, by choosing a direction around $O$, clockwise or counter-clockwise, without violating strong c-monotonicity or simplicity of $\mathcal{D}'$. Because two circle edges crossing each other twice would also mean that they together cover the plane, we can restrict our attention to strong c-monotonicity.

\begin{figure}[h]
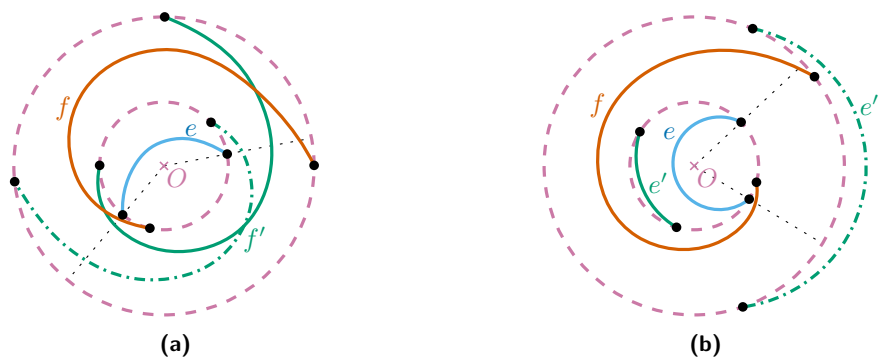

\centering
\subcaptionbox{\centering\label{fig:strongly-cylindrical-c-mon-same}}[.49\textwidth]{\includegraphics[page=4]{Figures/special_cylindrical.pdf}}
\subcaptionbox{\centering\label{fig:strongly-cylindrical-c-mon-different}}[.49\textwidth]{\includegraphics[page=5]{Figures/special_cylindrical.pdf}}
\caption{\textbf{(a)}~The lateral edge $f$ excludes one direction for $e$. Another lateral edge $f'$ cannot exclude the second direction for $e$ because otherwise $f$ and $f'$ would either cross each other twice (solid~$f'$) or form a double-spiral (dash dotted $f'$). \textbf{(b)}~The lateral edge $f$ also excludes one direction for all circle edges $e'$ covered by $e$; the dash dotted example for $e'$ is not possible.}
\label{fig:strongly-cylindrical-c-mon}
\end{figure}

We first consider possible interactions between $\Lambda_{e}$ and $\Lambda_{f}$, for a circle edge $e$ and a lateral edge $f$, once we fix one of the two directions for $e$, that is, whether $e$ together with $f$ might cover the plane. Assume there exist lateral edges $f$ and $f'$ such that one choice for the direction of $e$ leads to $e$ and $f$ covering the plane and the other choice leads to $e$ and $f'$ covering the plane. Then $f$ and $f'$ would either cross each other twice or form a double-spiral, a contradiction in any case; see \Cref{fig:strongly-cylindrical-c-mon-same} for an example. Hence, with respect to all lateral edges, the edge~$e$ has at least one of its two potential directions available.

Furthermore, if one direction of $e$ is excluded by a lateral edge $f$, then $f$ also excludes the respective direction for all circle edges that are covered by~$e$, once $e$ is drawn in its unique available direction; see \Cref{fig:strongly-cylindrical-c-mon-different} for an example. That is, two circle edges $e$ and $e'$ that each only have one direction available will not cover the plane.

Finally, if there are circle edges with both directions available, we choose some ray~$r$ starting at $O$ and use, for all those remaining edges, the direction such that they do not cross~$r$. Consequently, no pair of those edges can cover the plane. Furthermore, let $e$ be a circle edge that was forced into one direction by a lateral edge $f$ and let $e'$ be a circle edge that had both directions available. Then $e$ and $e'$ cannot cover the plane because otherwise also $f$ and $e'$ would cover the plane, a contradiction to $e'$ having both directions available. Hence this produces a strongly c-monotone drawing $\mathcal{D}'$ which is weakly isomorphic to~$\mathcal{D}$.
\end{proof}

It is essential for the proof of \Cref{thm:strong-cylindrical-is-c-mon} that the initial drawing $\mathcal{D}$ is \emph{strongly} cylindrical, because when a circle edge is drawn in the lateral face, then its direction around~$O$ is fixed from the start. In particular, we made use of that with the pretzel structure in \Cref{fig:cylindrical-example}, where we constructed a cylindrical drawing that is not strongly c-monotone.

\section{Conclusion}\label{sec:conclude}

In this paper, we extended Rafla's Hamiltonian cycle conjecture for simple drawings of $K_{n}$ (\Cref{conj:main}) to a conjecture about Hamiltonian paths between given pairs of end-vertices (\Cref{conj:stronger}). We gave an overview on known sub-classes of simple drawings (\Cref{sec:classes}) and analyzed containment relations between them (\Cref{sec:relations}). Moreover, we proved our new conjecture for several of those drawing classes, in particular, strongly c-monotone drawings and cylindrical drawings (\Cref{sec:all-pair-proofs}).

A next goal is to extend our results to more classes of simple drawings, especially, generalized twisted drawings on an even number of vertices. From there we further aim for c-monotone drawings and crossing maximal drawings. The former are of interest due to the feature we mentioned after their definition in \Cref{sec:classes}. The latter are of interest because they potentially contain only few plane sub-drawings.

Another intriguing question is to figure out the essential reason why \Cref{conj:main} should be true in general for simple drawings, while it is not true anymore for star-simple drawings. Moreover, it would be interesting to know whether the relation between \Cref{conj:stronger,conj:main} (\Cref{thm:conj-all-hp-stronger-hc}) can be formulated more specific for a single drawing instead of a whole set of drawings and whether \Cref{conj:main} also implies \Cref{conj:stronger}. We remark, however, that even if \Cref{conj:stronger} is strictly stronger in the sense that \Cref{conj:main} does not imply \Cref{conj:stronger}, it could potentially be easier to prove. Conversely, if \Cref{conj:stronger} should turn out to be false, this might give some insight on Rafla's original conjecture as well.

Finally, regarding relations between classes of simple drawings, we would like to know whether there is an inclusion relation between the rather combinatorially defined class of g-convex drawings and any of the more topologically defined classes like c-monotone drawings.

\bibliography{cgt_cfhcs}

\begin{thebibliography}{10}

\bibitem{aafhpprsv-2015-agdscg}
Bernardo~M. {\'A}brego, Oswin Aichholzer, Silvia Fern{\'a}ndez{-}Mer{\-}chant,
  Thomas Hackl, J{\"u}rgen Pammer, Alexander Pilz, Pedro Ramos, Gelasio
  Salazar, and Birgit Vogtenhuber.
\newblock All good drawings of small complete graphs.
\newblock In {\em Proceedings of the 31st European Workshop on Computational
  Geometry (EuroCG 2015)}, pages 57--60, 2015.
\newblock URL:
  \url{http://eurocg15.fri.uni-lj.si/pub/eurocg15-book-of-abstracts.pdf}.

\bibitem{aafmmmrrv-2018-bd}
Bernardo~M. {\'A}brego, Oswin Aichholzer, Silvia Fern{\'a}ndez{-}Mer{\-}chant,
  Dan McQuillan, Bojan Mohar, Petra Mutzel, Pedro Ramos, R.~Bruce Richter, and
  Birgit Vogtenhuber.
\newblock Bishellable drawings of {$K_{n}$}.
\newblock {\em SIAM Journal on Discrete Mathematics}, 32(4):2482--2492, 2018.
\newblock \href {https://doi.org/10.1137/17M1147974}
  {\path{doi:10.1137/17M1147974}}.

\bibitem{aafrs-2013-2pcn}
Bernardo~M. {\'A}brego, Oswin Aichholzer, Silvia Fern{\'a}ndez{-}Merchant,
  Pedro Ramos, and Gelasio Salazar.
\newblock The 2-page crossing number of {$K_{n}$}.
\newblock {\em Discrete \& Computational Geometry}, 49(4):747--777, 2013.
\newblock \href {https://doi.org/10.1007/s00454-013-9514-0}
  {\path{doi:10.1007/s00454-013-9514-0}}.

\bibitem{aafrs-2014-sdccn}
Bernardo~M. {\'A}brego, Oswin Aichholzer, Silvia Fern{\'a}ndez{-}Merchant,
  Pedro Ramos, and Gelasio Salazar.
\newblock Shellable drawings and the cylindrical crossing number of {$K_{n}$}.
\newblock {\em Discrete \& Computational Geometry}, 52(4):743--753, 2014.
\newblock \href {https://doi.org/10.1007/s00454-014-9635-0}
  {\path{doi:10.1007/s00454-014-9635-0}}.

\bibitem{aeppv-2017-ssdcg}
Oswin Aichholzer, Florian Ebenf{\"u}hrer, Irene Parada, Alexander Pilz, and
  Birgit Vogtenhuber.
\newblock On semi-simple drawings of the complete graph.
\newblock In {\em Proceedings of the XVII Spanish Meeting on Computational
  Geometry (EGC 2017)}, pages 25--28, 2017.
\newblock URL:
  \url{https://dmat.ua.es/en/egc17/documentos/book-of-abstracts.pdf}.

\bibitem{agtvw-2022-twfpssdcg}
Oswin Aichholzer, Alfredo Garc{\'i}a, Javier Tejel, Birgit Vogtenhuber, and
  Alexandra Weinberger.
\newblock Twisted ways to find plane structures in simple drawings of complete
  graphs.
\newblock In {\em Proceedings of the 38th International Symposium on
  Computational Geometry (SoCG 2022)}, pages 5:1--5:18, 2022.
\newblock \href {https://doi.org/10.4230/LIPIcs.SoCG.2022.5}
  {\path{doi:10.4230/LIPIcs.SoCG.2022.5}}.

\bibitem{agtvw-2023-crsgtd5t}
Oswin Aichholzer, Alfredo Garc{\'i}a, Javier Tejel, Birgit Vogtenhuber, and
  Alexandra Weinberger.
\newblock Characterizing rotation systems of generalized twisted drawings via
  5-tuples.
\newblock In {\em Proceedings of the XX Spanish Meeting on Computational
  Geometry (EGC 2023)}, page~71, 2023.
\newblock URL:
  \url{https://egc23.web.uah.es/wp-content/uploads/2023/06/EGC23_paper_18.pdf}.

\bibitem{ahpsv-2015-dmgdcg}
Oswin Aichholzer, Thomas Hackl, Alexander Pilz, Gelasio Salazar, and Birgit
  Vogtenhuber.
\newblock Deciding monotonicity of good drawings of the complete graph.
\newblock In {\em Proceedings of the XVI Spanish Meeting on Computational
  Geometry (EGC 2015)}, pages 33--36, 2015.
\newblock URL:
  \url{https://dccg.upc.edu/egc15/en/wp-content/uploads/2013/10/AbstractsXVIEGC.pdf}.

\bibitem{abr-2021-edgap}
Alan Arroyo, Julien Bensmail, and R.~Bruce Richter.
\newblock Extending drawings of graphs to arrangements of pseudolines.
\newblock {\em Journal of Computational Geometry}, 12(2):3--24, 2021.
\newblock \href {https://doi.org/10.20382/jocg.v12i2a2}
  {\path{doi:10.20382/jocg.v12i2a2}}.

\bibitem{amrs-2018-llpdet}
Alan Arroyo, Dan McQuillan, R.~Bruce Richter, and Gelasio Salazar.
\newblock Levi's lemma, pseudolinear drawings of {$K_{n}$}, and empty
  triangles.
\newblock {\em Journal of Graph Theory}, 87(4):443--459, 2018.
\newblock \href {https://doi.org/10.1002/jgt.22167}
  {\path{doi:10.1002/jgt.22167}}.

\bibitem{amrs-2021-cdcgtg}
Alan Arroyo, Dan McQuillan, R.~Bruce Richter, and Gelasio Salazar.
\newblock Convex drawings of the complete graph: topology meets geometry.
\newblock {\em Ars Mathematica Contemporanea}, 22(3):27, 2022.
\newblock \href {https://doi.org/10.26493/1855-3974.2134.ac9}
  {\path{doi:10.26493/1855-3974.2134.ac9}}.

\bibitem{ars-2021-edcgap}
Alan Arroyo, R.~Bruce Richter, and Matthew Sunohara.
\newblock Extending drawings of complete graphs into arrangements of
  pseudocircles.
\newblock {\em SIAM Journal on Discrete Mathematics}, 35(2):1050--1076, 2021.
\newblock \href {https://doi.org/10.1137/20M1313234}
  {\path{doi:10.1137/20M1313234}}.

\bibitem{bfk-2015-cnccmd}
Martin Balko, Radoslav Fulek, and Jan Kyn{\v{c}}l.
\newblock Crossing numbers and combinatorial characterization of monotone
  drawings of {$K_{n}$}.
\newblock {\em Discrete \& Computational Geometry}, 53:107--143, 2015.
\newblock \href {https://doi.org/10.1007/s00454-014-9644-z}
  {\path{doi:10.1007/s00454-014-9644-z}}.

\bibitem{bgt-2022-mstmocg}
J{\'a}nos Bar{\'a}t, Andr{\'a}s Gy{\'a}rf{\'a}s, and G{\'e}za T{\'o}th.
\newblock Monochromatic spanning trees and matchings in ordered complete
  graphs, 2022.
\newblock \href {https://arxiv.org/abs/2210.10135} {\path{arXiv:2210.10135}}.

\bibitem{bfrs-2023-usspssd}
Helena Bergold, Stefan Felsner, Meghana M.~Reddy, and Manfred Scheucher.
\newblock Using {SAT} to study plane {H}amiltonian substructures in simple
  drawings.
\newblock In {\em Proceedings of the 39th European Workshop on Computational
  Geometry (EuroCG 2023)}, pages 2:1--2:7, 2023.
\newblock URL:
  \url{https://dccg.upc.edu/eurocg23/wp-content/uploads/2023/03/Session-5B-Talk-1.pdf},
  \href {https://arxiv.org/abs/2305.09432v1} {\path{arXiv:2305.09432v1}}.

\bibitem{bk-1979-btg}
Frank Bernhart and Paul~C. Kainen.
\newblock The book thickness of a graph.
\newblock {\em Journal of Combinatorial Theory, Series B}, 27(3):320--331,
  1979.
\newblock \href {https://doi.org/10.1016/0095-8956(79)90021-2}
  {\path{doi:10.1016/0095-8956(79)90021-2}}.

\bibitem{e-2017-rrs}
Florian Ebenf{\"u}hrer.
\newblock Realizability of rotation systems.
\newblock Master's thesis, Graz University of Technology, Austria, 2017.
\newblock URL:
  \url{https://diglib.tugraz.at/realizability-of-rotation-systems-2017}.

\bibitem{f-2014-endestgcd}
Radoslav Fulek.
\newblock Estimating the number of disjoint edges in simple topological graphs
  via cylindrical drawings.
\newblock {\em SIAM Journal on Discrete Mathematics}, 28(1):116--121, 2014.
\newblock \href {https://doi.org/10.1137/130925554}
  {\path{doi:10.1137/130925554}}.

\bibitem{fpss-2013-htmdlp}
Radoslav Fulek, Michael~J. Pelsmajer, Marcus Schaefer, and Daniel
  {\v{S}}tefan{\-}kovi{\v{c}}.
\newblock Hanani--{T}utte, monotone drawings, and level-planarity.
\newblock In {\em Thirty essays on geometric graph theory}, pages 263--287.
  Springer, 2013.
\newblock \href {https://doi.org/10.1007/978-1-4614-0110-0_14}
  {\path{doi:10.1007/978-1-4614-0110-0_14}}.

\bibitem{gpt-2021-psctd}
Alfredo Garc{\'i}a, Alexander Pilz, and Javier Tejel.
\newblock On plane subgraphs of complete topological drawings.
\newblock {\em Ars Mathematica Contemporanea}, 20(1):69--87, 2021.
\newblock \href {https://doi.org/10.26493/1855-3974.2226.e93}
  {\path{doi:10.26493/1855-3974.2226.e93}}.

\bibitem{gtvw-2023-etgtd}
Alfredo Garc{\'i}a, Javier Tejel, Birgit Vogtenhuber, and Alexandra Weinberger.
\newblock Empty triangles in generalized twisted drawings of {$K_{n}$}.
\newblock In {\em Proceedings of the 30th International Symposium on Graph
  Drawing and Network Visualization (GD 2022)}, pages 40--48, 2023.
\newblock \href {https://doi.org/10.1007/978-3-031-22203-0_4}
  {\path{doi:10.1007/978-3-031-22203-0_4}}.

\bibitem{g-2022-cgdtm}
Emeric Gioan.
\newblock Complete graph drawings up to triangle mutations.
\newblock {\em Discrete \& Computational Geometry}, 67(4):985--1022, 2022.
\newblock \href {https://doi.org/10.1007/s00454-021-00339-8}
  {\path{doi:10.1007/s00454-021-00339-8}}.

\bibitem{gjs-1968-tcncg}
Richard~K. Guy, Tom Jenkyns, and Jonathan Schaer.
\newblock The toroidal crossing number of the complete graph.
\newblock {\em Journal of Combinatorial Theory}, 4(4):376--390, 1968.
\newblock \href {https://doi.org/10.1016/S0021-9800(68)80063-8}
  {\path{doi:10.1016/S0021-9800(68)80063-8}}.

\bibitem{hh-1963-nccg}
Frank Harary and Anthony Hill.
\newblock On the number of crossings in a complete graph.
\newblock {\em Proceedings of the Edinburgh Mathematical Society},
  13(4):333--338, 1963.
\newblock \href {https://doi.org/10.1017/S0013091500025645}
  {\path{doi:10.1017/S0013091500025645}}.

\bibitem{h-1998-etdcg}
Heiko Harborth.
\newblock Empty triangles in drawings of the complete graph.
\newblock {\em Discrete Mathematics}, 191(1-3):109--111, 1998.
\newblock \href {https://doi.org/10.1016/S0012-365X(98)00098-3}
  {\path{doi:10.1016/S0012-365X(98)00098-3}}.

\bibitem{hm-1974-ewcdcg}
Heiko Harborth and Ingrid Mengersen.
\newblock Edges without crossings in drawings of complete graphs.
\newblock {\em Journal of Combinatorial Theory, Series B}, 17(3):299--311,
  1974.
\newblock \href {https://doi.org/10.1016/0095-8956(74)90035-5}
  {\path{doi:10.1016/0095-8956(74)90035-5}}.

\bibitem{hm-1992-dcgmnc}
Heiko Harborth and Ingrid Mengersen.
\newblock Drawings of the complete graph with maximum number of crossings.
\newblock In {\em Proceedings of the 23rd Southeastern International Conference
  on Combinatorics, Graph Theory, and Computing}, pages 225--228, 1992.

\bibitem{k-2009-esctg}
Jan Kyn{\v{c}}l.
\newblock Enumeration of simple complete topological graphs.
\newblock {\em European Journal of Combinatorics}, 30(7):1676--1685, 2009.
\newblock \href {https://doi.org/10.1016/j.ejc.2009.03.005}
  {\path{doi:10.1016/j.ejc.2009.03.005}}.

\bibitem{kv-2009-ecfoestcg}
Jan Kyn{\v{c}}l and Pavel Valtr.
\newblock On edges crossing few other edges in simple topological complete
  graphs.
\newblock {\em Discrete Mathematics}, 309(7):1917--1923, 2009.
\newblock \href {https://doi.org/10.1016/j.disc.2008.03.005}
  {\path{doi:10.1016/j.disc.2008.03.005}}.

\bibitem{mo-2018-cnssdcg}
Petra Mutzel and Lutz Oettershagen.
\newblock The crossing number of seq-shellable drawings of complete graphs.
\newblock In {\em Proceedings of the 29th International Workshop on
  Combinatorial Algorithms (IWOCA 2018)}, pages 273--284, 2018.
\newblock \href {https://doi.org/10.1007/978-3-319-94667-2_23}
  {\path{doi:10.1007/978-3-319-94667-2_23}}.

\bibitem{pst-2003-ucctg}
J{\'a}nos Pach, J{\'o}zsef Solymosi, and G{\'e}za T{\'o}th.
\newblock Unavoidable configurations in complete topological graphs.
\newblock {\em Discrete \& Computational Geometry}, 30(2):311--320, 2003.
\newblock \href {https://doi.org/10.1007/s00454-003-0012-9}
  {\path{doi:10.1007/s00454-003-0012-9}}.

\bibitem{r-1988-gdcg}
Nabil~H. Rafla.
\newblock {\em The good drawings {$D_{n}$} of the complete graph {$K_{n}$}}.
\newblock PhD thesis, McGill University, Montreal, 1988.
\newblock URL: \url{https://escholarship.mcgill.ca/concern/theses/x346d4920}.

\bibitem{r-2017-mdetg}
Andres~J. Ruiz{-}Vargas.
\newblock Many disjoint edges in topological graphs.
\newblock {\em Computational Geometry}, 62:1--13, 2017.
\newblock \href {https://doi.org/10.1016/j.comgeo.2016.11.003}
  {\path{doi:10.1016/j.comgeo.2016.11.003}}.

\bibitem{sz-2022-upcstg}
Andrew Suk and Ji~Zeng.
\newblock Unavoidable patterns in complete simple topological graphs.
\newblock In {\em Proceedings of the 30th International Symposium on Graph
  Drawing and Network Visualization (GD 2022)}, pages 3--15, 2023.
\newblock \href {https://doi.org/10.1007/978-3-031-22203-0_1}
  {\path{doi:10.1007/978-3-031-22203-0_1}}.

\bibitem{w-2011-cb}
Bang Wong.
\newblock Color blindness.
\newblock {\em Nature Methods}, 8(6):441, 2011.
\newblock URL: \url{https://www.nature.com/articles/nmeth.1618.pdf}.

\end{thebibliography}

\end{document}